\g@addto@macro\bfseries{\boldmath}
\def \N {{\mathbb N}}
\def \C {{\mathbb C}}
\def \Z {{\mathbb Z}}
\def \Q {{\mathbb Q}}
\def \R {{\mathbb R}}
\def \O {{\mathcal O}}
\def \P {{\mathcal P}}
\providecommand{\keywords}[1]{\textbf{\textit{Keywords---}} #1}
\providecommand{\MSC}[1]{\textbf{\textit{MSC---}} #1}
\newtheorem{defi}{Definition}[section]
\newtheorem{teo}[defi]{Theorem}
\newtheorem{coro}[defi]{Corollary}
\newtheorem{lema}[defi]{Lemma}
\newtheorem{pro}[defi]{Proposition}
\newtheorem{rmk}[defi]{Remark}
\title{Additive structure of non-monogenic simplest cubic fields}
\author[1,2]{Daniel Gil-Muñoz}
\author[3,4]{Magdaléna Tinková}
\affil[1]{Department of Algebra, Faculty of Mathematics and Physics, Charles University,
Sokolovsk\'{a} 83, 186 00 Praha 8, Czech Republic\bigskip}
\affil[2]{Dipartimento di Matematica, Università di Pisa, Largo B. Pontecorvo, 5, 56127 Pisa, Italy\bigskip}
\affil[3]{Faculty of Information Technology, Czech Technical University in Prague, Th\'akurova 9, 160 00 Praha 6, Czech Republic\bigskip}
\affil[4]{Institute of Analysis and Number Theory, TU Graz, Kopernikusgasse 24/II, 8010 Graz, Austria}
\date{}
\begin{document}
\maketitle

\begin{abstract}
We consider Shanks' simplest cubic fields $K$ for which the index $[\mathcal{O}_K:\mathbb{Z}[\rho]]$ of a root $\rho$ of the defining parametric polynomial is $3$. For them, we study the additive indecomposables of $K$ and provide a complete list of them. Moreover, we use the knowledge of the indecomposables to prove some interesting consequences on the arithmetic of $K$. Mainly, we obtain good bounds on the ranks of universal quadratic forms over $K$ and prove that the Pythagoras number of $\mathcal{O}_K$ is $6$.
\end{abstract}

\MSC{11R16, 11R80, 11R04.}

\keywords{Simplest cubic field,  ring of integers, indecomposable.}

\section{Introduction}

An element of a totally real number field $K$ is totally positive if it is mapped to a positive real number by each of the embeddings of $K$. This notion is an analogue of positive rational numbers, and it is well suited to describe the additive structure of the ring of algebraic integers $\mathcal{O}_K$. Namely, an indecomposable of $K$ is a totally positive element of $\mathcal{O}_K$ that cannot be written as a sum of other two totally positive elements of $\mathcal{O}_K$. With this definition, it is clear that every totally positive algebraic integer in $K$ is a sum of indecomposables of $K$.

The study of indecomposable elements has been proved as a useful tool to study a number of arithmetic properties of $K$. Blomer and Kala \cite{blomerkala2015,kala2016} established a connection of these elements with the minimal rank $m(K)$ of universal quadratic forms over $K$, which are those quadratic forms with coefficients in $\mathcal{O}_K$ (or an $\mathcal{O}_K$-order) that represent all totally positive elements of $\mathcal{O}_K$. For a totally real quadratic field $K=\mathbb{Q}(\sqrt{D})$, Perron \cite{perron}, Dress and Scharlau \cite{dressscharlau} found a complete list of indecomposable integers in $K$ in terms of the continued fraction expansion of $\frac{-1+\sqrt{D}}{2}$ if $D\equiv1\,(\mathrm{mod}\,4)$ and of $\sqrt{D}$ otherwise. Blomer and Kala used this description to show that for every positive integer $n$ there are infinitely many totally real quadratic fields $K$ such that $m(K)\geq n$. In other words, there are infinitely many real quadratic fields without universal quadratic forms of rank $n$ arbitrarily large. In the following years analogous results were obtained for other families of number fields \cite{kalasvodoba,yatsyna,kala2022,blomerkala2018}.

This substantial progress on quadratic fields motivated the search of indecomposables in families of totally real number fields with higher degree. Among these, the family of simplest cubic fields, originally studied by Shanks \cite{shanks1974}, presents a nice arithmetic behaviour. These are the totally real cubic fields generated by a polynomial of the form $$f(x)=x^3-ax^2-(a+3)x-1,\quad a\in\mathbb{Z}_{\geq -1},$$ meaning that $K=\mathbb{Q}(\rho)$ for a root $\rho$ of $f$. 

Kala and the second author \cite{kalatinkova} provided a complete list of the indecomposables in $\mathbb{Z}[\rho]$, which led to some bounds on the minimal rank of universal quadratic forms (see \cite[Theorem 1.1]{kalatinkova}). To do so, they proved that all of the indecomposables in $\mathbb{Z}[\rho]$ are, up to multiplication by totally positive units, contained in two specific parallelepipeds at the Minkowski space $\mathbb{R}^3$, whose nodes depend on a proper pair of totally positive units (according to the terminology in \cite{thomasvasquez}). These considerations led to a geometric method consisting in finding all lattice points within those two parallelepipeds and identifying the indecomposables among them. In \cite[Section 8]{kalatinkova} and \cite{tinkova}, full lists of indecomposables of an analogous order of fields in some other families are given. Moreover, inspired by the connection of indecomposables with the continued fraction in real quadratic fields, the authors in \cite{kst} started to study the question whether indecomposables in fields of higher degrees can be obtained by some of the multidimensional continued fraction algorithms. For some partial results on indecomposable integers in real biquadratic fields, see also \cite{cechetal,krasenskytinkovazemkova}.

If $K=\mathbb{Q}(\rho)$ is a simplest cubic field, it is known that $\mathrm{disc}(\rho)=\Delta^2$, where $\Delta=a^2+3a+9$. Moreover, for a positive density of $a$, this is just the discriminant of $K$, and those are the cases in which $\mathcal{O}_K=\mathbb{Z}[\rho]$. In this situation, the above-mentioned classification by Kala and the second author provides all the indecomposables in the ring of integers of the simplest cubic field.

In general, a simplest cubic field $K$ need not be monogenic. Kashio and Sekigawa \cite{kashiosekigawa} gave a characterization for this condition. Namely, they proved that $K$ is monogenic if and only if the module index $\delta\coloneqq[\mathcal{O}_K:\mathbb{Z}[\rho]]$ is the cube of some integer number or $K$ is defined by more than one value of the parameter $a$. In this paper, we shall specialize to the case when $\delta=3$. We will see that in that case, our field has integral basis $$g_1=1,\quad g_2=\rho,\quad g_3=\frac{1+\rho+\rho^2}{3}.$$ This family had already been considered in a different (but equivalent) form in \cite[Theorem 3]{canovasorvay}. We will determine the lattice points at each of the two parallelepipeds containing indecomposables of $K$.

The main result in this paper is a complete list of indecomposables for fields in this family, which we prove in Section \ref{sec:indec}.

\begin{teo}\label{teolistindec} Let $K=\mathbb{Q}(\rho)$ be a simplest cubic field with $[\mathcal{O}_K:\mathbb{Z}[\rho]]=3$. %with $a\equiv3$ or $21\,(\mathrm{mod}\,27)$, $a>12$ and $\frac{\Delta}{27}$ square-free. %\textcolor{blue}{D: Now that I think about it. Should we just state index $3$ here instead of saying the characterization on the parameter $a$? Up to this point, the reader need not know that these are just the simplest cubic fields with index 3. Perhaps that information was in the theorem that we removed.} 
Up to multiplication by totally positive units, the indecomposable integers in $K$ are 1,
\begin{itemize}
    \item[(i)] $g_3$,
    \item[(ii)] $-g_1-(r+1)g_2+3g_3=-r\rho+\rho^2$ where $1\leq r\leq\frac{a}{3}$,
    \item[(iii)] $-(2v+1)g_1-(v(a+3)+2)g_2+3(v+1)g_3=-v-(v(a+2)+1)\rho+(v+1)\rho^2$ where $\frac{2a}{3}+1\leq v\leq a$,
    \item[(iv)] $-(2v+1)g_1-(v+1)(a+2)g_2+3(v+1)g_3=-v-(v(a+2)+a-v+1)\rho+(v+1)\rho^2$ where $0\leq v\leq \frac{a}{3}-1$,
    \item[(v)] $-(2v+1)g_1-(v(a+3)+r+1)g_2+(3v+2)g_3=\\-v-(v(a+2)+r)\rho+(v+1)\rho^2-g_3$ where $ 0\leq v\leq\frac{a}{3}-1$ and $\frac{a}{3}+1\leq r\leq\frac{2a}{3}-v$,
    \item[(vi)] $-(r+1)g_2+g_3=1-r\rho+\rho^2-2g_3$ where $0\leq r\leq\frac{a}{3}-1$,
    \item[(vii)] $-(2v+2)g_1-(v(a+3)+\frac{2a}{3}+3)g_2+(3v+4)g_3=\\-v-(v(a+2)+\frac{2a}{3}+1)\rho+(v+2)\rho^2-2g_3$ where $0\leq v\leq\frac{a}{3}-1$,
    \item[(viii)] $-(2v+2)g_1-(v(a+3)+\frac{4a}{3}-v+3)g_2+(3v+4)g_3=\\-v-(v(a+2)+\frac{4a}{3}-v+1)\rho+(v+2)\rho^2-2g_3$ where $\frac{a}{3}\leq v\leq\frac{2a}{3}-1$.
\end{itemize}
The points from (ii) to (iv) have minimal trace $2$, while the remaining ones have minimal trace $1$. The total number of indecomposables in $K$ (up to multiplication by totally positive units) is $\frac{a^2+3a}{18}+2a+2$.
\end{teo}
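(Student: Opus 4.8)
The plan is to follow the geometric method of \cite{kalatinkova} adapted to the ring of integers $\O_K$ with basis $g_1,g_2,g_3$. First I would fix a proper pair of totally positive units $\varepsilon_1,\varepsilon_2$ for $K$ (in the sense of \cite{thomasvasquez}), so that by the reduction from \cite{kalatinkova} every indecomposable of $\O_K$ is, up to multiplication by a totally positive unit, a lattice point of $\O_K$ lying in one of the two parallelepipeds $\mathcal{P}_1,\mathcal{P}_2$ determined by $\varepsilon_1,\varepsilon_2$ in the Minkowski embedding. By the earlier part of the paper these are the same two parallelepipeds as in the $\O_K=\Z[\rho]$ case; Proposition \ref{firstparalgen} handles $\mathcal{P}_1$, and the running description of lattice points in $\mathcal{P}_2$ for the $p=3$ family (Section \ref{firstsubfamily}) supplies the candidate list. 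So the first block of the proof is: enumerate all lattice points $x g_1 + y g_2 + z g_3 \in \mathcal{P}_1 \cup \mathcal{P}_2$ and translate the resulting inequalities on $x,y,z$ into the explicit parametrized families (i)--(viii) together with $1$ and a handful of points that will turn out to be decomposable.

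Next I would decide, for each candidate, whether it is genuinely indecomposable. The key tool is the minimal trace: an element is shown indecomposable by exhibiting a totally positive $\delta$ in the codifferent $\O_K^{\vee}$ with $\mathrm{Tr}(\delta\alpha)=1$ (so $\alpha$ has minimal trace $1$ and is automatically indecomposable), and in the remaining cases (families (ii)--(iv)) I expect minimal trace $2$, where one argues indecomposability directly: if $\alpha=\beta+\gamma$ with $\beta,\gamma\succ 0$ then $\mathrm{Tr}(\delta\beta),\mathrm{Tr}(\delta\gamma)\ge 1$ force both to equal $1$, and then a short case analysis on which lattice points have $\delta$-trace $1$ — using the explicit description of $\mathcal{P}_1\cup\mathcal{P}_2$ and the norm/trace data — rules out any such decomposition. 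Conversely, for every candidate NOT on the list I would write down an explicit decomposition into two totally positive integers (typically a sum of two previously listed indecomposables or of an indecomposable and a totally positive unit), which is the way one trims the enumeration down to exactly (i)--(viii). Finally the count $\frac{a^2+3a}{18}+2a+2$ follows by adding up the lengths of the ranges: $1$ for $g_3$, $\frac{a}{3}$ from (ii), $\frac{a}{3}+1$ from each of (iii) and (iv) (after accounting for overlaps/endpoints), $\frac{a}{3}$ from (vi), $\frac{a}{3}$ from each of (vii) and (viii), and the genuinely quadratic contribution $\frac{a^2+3a}{18}$ coming from the two-parameter family (v) where $v$ ranges over $\frac{a}{3}$ values and $r$ over roughly $\frac{a}{3}-v$ values, summed as an arithmetic progression; one must be careful that the stated bounds are compatible (e.g. $a$ divisible by $3$, which is forced by $a\equiv 3,21\ (\mathrm{mod}\ 27)$) and that no candidate is double-counted across families.

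I would organize the write-up by first recording the concrete $\varepsilon_1,\varepsilon_2$ and the resulting vertex coordinates of $\mathcal{P}_1,\mathcal{P}_2$ in the $g_i$-basis, then proving a lemma that lists all lattice points in each parallelepiped (this is where the bulk of the inequality bookkeeping lives, much of it inherited from Section \ref{firstsubfamily}), then a lemma computing, for a suitable fixed $\delta\in\O_K^{\vee}$ (totally positive, of small trace), the value $\mathrm{Tr}(\delta\alpha)$ as an explicit linear form in $x,y,z$ — this is the workhorse that both certifies minimal trace $1$ for (i) and (v)--(viii) and pins down the minimal trace $2$ for (ii)--(iv). The indecomposability-via-trace-$1$ cases are then immediate; the trace-$2$ cases and the decomposability of the discarded points are finite case checks.

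The main obstacle I anticipate is twofold. First, the explicit description of the lattice points in the second parallelepiped $\mathcal{P}_2$: unlike $\mathcal{P}_1$, its vertices are not "nice" in the $g_i$-coordinates once $\O_K\neq\Z[\rho]$, so carving out the integer points requires delicate simultaneous inequalities whose solution set splits into several regimes (reflected in the different ranges of $v$ in (iii)/(iv) versus (v)/(vii)/(viii)). Second, proving minimal trace exactly $2$ for (ii)--(iv) — i.e. ruling out every decomposition — is more subtle than the trace-$1$ certificates, because one must show that no pair of totally positive integers summing to $\alpha$ can both have $\delta$-trace $1$; this needs the full classification of the trace-$1$ lattice points in $\mathcal{P}_1\cup\mathcal{P}_2$ and a check that none of the relevant sums lands on $\alpha$, so it depends on having the enumeration completely under control. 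Everything else — the arithmetic-progression count, the conversions between the $g_i$-basis and the $1,\rho,\rho^2$ expressions displayed in the theorem — is routine but must be done carefully to get the stated total right.
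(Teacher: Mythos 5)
Your plan follows the paper's proof almost exactly: same proper pair and the same two parallelepipeds, the same enumeration of lattice points in the $g_i$-basis (with the first parallelepiped contributing only $g_3$), the same use of explicit totally positive codifferent elements ($\varphi_1+\varphi_3$ and $3\varphi_1+2\varphi_3$ in the paper's notation) to certify minimal trace $1$ for (i) and (v)--(viii), explicit decompositions for the discarded lattice points, and the arithmetic-progression count. The paper additionally leans on the transformations $T_1(\alpha)=\alpha'\varepsilon_1$, $T_2(\alpha)=\alpha''\varepsilon_2$ to cut the casework roughly to a third (e.g.\ (iii) and (iv) are unit multiples of conjugates of (ii), and (vii), (viii) of (vi)), but that is an organizational device, not a different method.

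There are two points where your outline, as written, would not go through. First, for the minimal-trace-$2$ families (ii)--(iv) you propose to rule out decompositions by a case analysis on which \emph{lattice points of} $\mathcal{P}_1\cup\mathcal{P}_2$ have $\delta$-trace $1$. That set is too small: the summands of a putative decomposition $\alpha=\beta+\gamma$ are arbitrary totally positive integers with $\mathrm{Tr}(\delta\beta)=\mathrm{Tr}(\delta\gamma)=1$, and these need not lie in the parallelepipeds (only indecomposables do, and only up to a totally positive unit, which can be arbitrary). The paper instead parametrizes \emph{all} integers of codifferent trace $1$ as $-v_1g_1-w_1g_2+(v_1+1)g_3$ with $v_1,w_1\in\Z$ unrestricted, and uses the explicit real bounds on $\rho,\rho',\rho''$ to show that total positivity forces $v_1,v_2\in\{0,1\}$ and $0\le w_i\le\frac{a}{3}+1$, after which the third embedding eliminates every candidate sum. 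Second, exhibiting one $\delta$ with $\mathrm{Tr}(\delta\alpha)=2$ only bounds the minimal trace from above; to get the claimed value $2$ for (ii)--(iv) one must also show that no totally positive codifferent element gives trace $1$, which the paper does by a separate argument pairing a hypothetical $\delta=u_1\varphi_1+u_2\varphi_2+u_3\varphi_3$ with $\mathrm{Tr}(\delta\alpha)=1$ against several auxiliary totally positive integers to derive incompatible sign conditions on $u_2$ and $u_3$. Neither fix changes your overall architecture, but both are needed to complete the argument.
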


In this statement, the minimal trace of an algebraic integer $\alpha\in\mathcal{O}_K$ is the minimum among all numbers of the form $\mathrm{Tr}(\delta\alpha)$, for totally positive elements $\delta$ in the codifferent $$\mathcal{O}_K^{\vee}=\{\delta\in K\,|\,\mathrm{Tr}(\delta\alpha)\in\mathbb{Z}\text{ for all }\alpha\in\O_K\}$$ of $K$. Its importance in the search of indecomposables relies in the fact that every totally positive algebraic integer of minimal trace $1$ is indecomposable. On the other hand, in \cite[Theorem 1.1]{tinkova}, the second author proved that the minimal trace of indecomposables in totally real cubic orders can be arbitrarily large.

Theorem \ref{teolistindec} is the first result providing the whole structure of indecomposables for a family of non-monogenic totally real number fields, although some results were achieved after the first submission of this paper, see, e.g., \cite{man}. It is also remarkable that this classification is substantially different from the one obtained by Kala and the second author for the indecomposables of $\mathbb{Z}[\rho]$. For instance, many indecomposables of $\mathbb{Z}[\rho]$ are no longer indecomposables of $\mathcal{O}_K$ in this case.

In Section \ref{sect:consequences}, we shall study some interesting consequences of Theorem \ref{teolistindec} on the arithmetic of this subfamily of the simplest cubic fields.

Lemmermeyer and Peth\"o \cite{lemmermeyerpetho} proved that for every $\gamma\in\mathbb{Z}[\rho]$ not associated with a rational integer, the norm $N(\gamma)$ of $\gamma$ satisfies $|N(\gamma)|\geq 2a+3$. In Proposition \ref{prosmallestnorm} we will see that except for some small values of $a$, this is also true for our non-monogenic subfamily. On the other hand, we will also study the largest possible norm of the indecomposables, and concretely we will prove that $|N(\alpha)|\leq\frac{(a^2+3a+9)^2}{729}$, again with some exceptions (see Proposition \ref{pro:largestnorm}).

The structure of indecomposables of a totally real number field $K$ can be used to find the Pythagoras number of its ring of integers $\mathcal{O}_K$ or any order in it. The Pythagoras number of a ring is the minimal number $s\in\mathbb{Z}$ such that every sum of a finite number of squares can be written as a sum of $s$ squares. The second author \cite{tinkovapyth} showed that in the case of a simplest cubic field $K=\mathbb{Q}(\rho)$, the Pythagoras number of $\mathbb{Z}[\rho]$ is exactly $6$ if $a\geq 3$. In the case that $\delta=3$, we will show that the Pythagoras number of $\mathcal{O}_K$ is again $6$. 

We shall use the knowledge of the indecomposables for this subfamily of the simplest cubic fields $K$ to find estimates for the minimal rank $m(K)$ of universal quadratic forms over $K$, in the style of the stated ones in \cite[Theorem 1.1]{kalatinkova}.

\section{Preliminaries}\label{sect:prelim}

Let $K$ be a totally real number field of degree $d$ and let $\sigma_1,\dots,\sigma_d\colon K\hookrightarrow\mathbb{R}$ be its embeddings. By trace $\text{Tr}(\alpha)$ and norm $N(\alpha)$ of $\alpha\in K$, we will mean
\[
\text{Tr}(\alpha)=\sum_{i=1}^d\sigma_i(\alpha) \hspace{1cm}\text{ and }\hspace{1cm} N(\alpha)=\prod_{i=1}^d\sigma_i(\alpha).
\]
An element $\alpha\in K$ is totally positive if $\sigma_i(\alpha)>0$ for every $1\leq i\leq d$, and we will denote it by $\alpha\succ 0$. The subset of the totally positive integers in $K$ is denoted by $\mathcal{O}_K^+$. Moreover, by $\alpha\succ\beta$, we will mean $\alpha-\beta\succ 0$. We will also use symbol $\succeq$ to include equality between our elements.

A totally positive integer $\alpha\in\mathcal{O}_K$ is said to be indecomposable if it cannot be written as the sum of other two totally positive integers in $\mathcal{O}_K$. The codifferent of $\mathcal{O}_K$ is $$\mathcal{O}_K^{\vee}=\{\delta\in K\,|\,\text{Tr}(\delta\alpha)\in\mathbb{Z}\hbox{ for every }\alpha\in\mathcal{O}_K\}.$$ The subset of the totally positive integers in $\mathcal{O}_K^{\vee}$ is denoted by $\mathcal{O}_K^{\vee,+}$. For $\alpha\in\mathcal{O}_K^{+}$, the number $$\mathrm{min}_{\delta\in\mathcal{O}_K^{\vee,+}}\mathrm{Tr}(\delta\alpha)$$ will be referred to as the minimal trace of $\alpha$. It is immediate that the totally positive algebraic integers with minimal trace $1$ are indecomposable.

We proceed to summarize the method introduced in \cite{kalatinkova} to find the indecomposables in $K$. Let us embed $K$ on the Minkowski space $\mathbb{R}^d$ by means of an integral basis of $K$.
Following \cite[Section 1, p. 2]{thomasvasquez}, we introduce the following terminology:

\begin{defi}\label{def:properpair} A  pair $(\varepsilon_1,\varepsilon_2)$ of units in $K$ is proper if the following statements hold:
\begin{itemize}
    \item[(1)] The subgroup of $K^*$ generated by $\{\varepsilon_1,\varepsilon_2\}$ has rank $2$.
    \item[(2)] The set $\{1,\varepsilon_1,\varepsilon_2\}$ is a $\mathbb{Q}$-basis of $K$.
    \item[(3)] In the linear combination of $\varepsilon_1\varepsilon_2$ with respect to the above basis, the coefficient before $1$ is negative.
\end{itemize}
\end{defi}

On the other hand, the parallelepiped with nodes $\ell_1,\dots,\ell_e\in\mathbb{Z}^d$ is $$\mathcal{D}(\ell_1,\dots,\ell_e)=[0,1]\ell_1+\dots+[0,1]\ell_e.$$ The key result is the following refinement of \cite[Theorem 1]{thomasvasquez} (see \cite[Section 4, p. 11-12]{kalatinkova}):

\begin{pro}\label{pro:ppidsmonogscf} Let $K$ be a totally real cubic field and let $\{\varepsilon_1,\varepsilon_2\}$ be a proper pair of totally positive units of $K$. The indecomposables of $K$ lie, up to multiplication by totally positive units in $K$, in either of the parallelepipeds $$\mathcal{D}(1,\varepsilon_1,\varepsilon_2),\quad\mathcal{D}(1,\varepsilon_1,\varepsilon_1\varepsilon_2^{-1}).$$
\end{pro}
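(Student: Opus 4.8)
The plan is to combine two ingredients: the description, due to Thomas and Vasquez, of an explicit fundamental domain attached to a proper pair for the action of totally positive units on the positive cone of the Minkowski space, together with the elementary observation that inside each simplicial cone of that domain an indecomposable cannot ``stick out'' of the corresponding unit box. Throughout I identify $K$ with its image in $\mathbb{R}^3$ under the Minkowski embedding determined by a fixed integral basis, so that $1,\varepsilon_1,\varepsilon_2$ and $\varepsilon_1\varepsilon_2^{-1}$ all become integral vectors, and I use that multiplication by a totally positive unit maps $\mathcal{O}_K^+$ onto itself and sends indecomposables to indecomposables, so that the assertion is invariant under such multiplication.

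First I would invoke \cite[Theorem 1]{thomasvasquez}: since $(\varepsilon_1,\varepsilon_2)$ is proper in the sense of Definition \ref{def:properpair}, the two simplicial cones
\[
C_0=\mathbb{R}_{\geq0}\cdot 1+\mathbb{R}_{\geq0}\cdot\varepsilon_1+\mathbb{R}_{\geq0}\cdot\varepsilon_2,\qquad C_1=\mathbb{R}_{\geq0}\cdot 1+\mathbb{R}_{\geq0}\cdot\varepsilon_1+\mathbb{R}_{\geq0}\cdot\varepsilon_1\varepsilon_2^{-1}
\]
together form a fundamental domain for the action of the group $\langle\varepsilon_1,\varepsilon_2\rangle$ on the open positive cone $\mathbb{R}_{>0}^3$. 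Here condition (2) ensures that $C_0$ is genuinely three-dimensional (and that $\{1,\varepsilon_1,\varepsilon_1\varepsilon_2^{-1}\}$ is again a $\mathbb{Q}$-basis, so that $C_1$ is a solid cone), condition (1) ensures that the logarithmic images of $\varepsilon_1,\varepsilon_2$ span a rank-$2$ lattice, and condition (3) is precisely what fixes the orientation so that $C_1$ --- rather than a translate of $C_0$ --- is the cone adjacent to $C_0$ across the facet $\mathbb{R}_{\geq0}1+\mathbb{R}_{\geq0}\varepsilon_1$ pointing into the domain. It follows that every $\alpha\in\mathcal{O}_K^+$, and in particular every indecomposable, can be written as $u\beta$ with $u\in\langle\varepsilon_1,\varepsilon_2\rangle$ a totally positive unit and $\beta\in C_0\cup C_1$.

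It then remains to show that an indecomposable $\alpha$ lying in $C_0$ already lies in $\mathcal{D}(1,\varepsilon_1,\varepsilon_2)$, and likewise for $C_1$ and $\mathcal{D}(1,\varepsilon_1,\varepsilon_1\varepsilon_2^{-1})$; I treat $C_0$, the other case being identical. Write $\alpha=x\cdot 1+y\varepsilon_1+z\varepsilon_2$ with $x,y,z\in\mathbb{Q}_{\geq0}$, the coordinates being uniquely determined by condition (2). If $x>1$, then $\beta\coloneqq\alpha-1\in\mathcal{O}_K$ satisfies $\sigma_i(\beta)=(x-1)+y\sigma_i(\varepsilon_1)+z\sigma_i(\varepsilon_2)>0$ for every embedding $\sigma_i$, because $x-1>0$, $y,z\geq0$ and $\varepsilon_1,\varepsilon_2\succ0$; hence $\beta\succ0$ and $\alpha=1+\beta$ exhibits $\alpha$ as a sum of two totally positive integers, contradicting indecomposability. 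The same argument with $\beta=\alpha-\varepsilon_1$ excludes $y>1$ and with $\beta=\alpha-\varepsilon_2$ excludes $z>1$, using that $\varepsilon_1,\varepsilon_2$ are totally positive algebraic integers. Hence $0\leq x,y,z\leq1$, that is, $\alpha\in\mathcal{D}(1,\varepsilon_1,\varepsilon_2)$; for $C_1$ one repeats this verbatim with nodes $1,\varepsilon_1,\varepsilon_1\varepsilon_2^{-1}$, noting that $\varepsilon_1\varepsilon_2^{-1}$ is again a totally positive unit. Combining with the reduction of the previous paragraph completes the proof.

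The confinement argument of the last paragraph is routine; the real content --- and the step I expect to be the main obstacle --- is making the first step precise, namely extracting from \cite[Theorem 1]{thomasvasquez} that for a proper pair it is exactly the union $C_0\cup C_1$ (not a larger, or differently shaped, union of cones) that serves as a fundamental domain, and checking carefully that each of the three conditions in Definition \ref{def:properpair} plays the role attributed to it. One must also be attentive to boundary behaviour, since the cones, the boxes and the unit orbits share faces: it has to be verified that the ``up to multiplication by totally positive units'' clause genuinely absorbs these overlaps, rather than leaving some indecomposable on a shared face uncounted.
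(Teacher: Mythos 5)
Your argument is correct and is essentially the proof the paper implicitly relies on: the paper gives no proof of this proposition, instead citing \cite[Section 4]{kalatinkova}, where the statement is obtained exactly as you do --- the Shintani/Thomas--Vasquez decomposition of the totally positive octant into $\langle\varepsilon_1,\varepsilon_2\rangle$-translates of the two simplicial cones attached to a proper pair, followed by the observation that if a coordinate of an indecomposable with respect to the nodes exceeds $1$, subtracting that node (a totally positive integer) yields a totally positive decomposition. Your closing caveats about the boundary faces and about which cones Thomas--Vasquez's Theorem 1 actually produces are exactly the points the cited reference handles, and the ``up to totally positive units'' clause does absorb the overlaps as you suspect.
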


This method was applied successfully to the case that $K$ is a simplest cubic field (see Section \ref{sectscf} below) with $\mathcal{O}_K=\mathbb{Z}[\rho]$, leading to a list of lattice points among which the indecomposables were identified.

\begin{teo}\cite[Theorem 1.2]{kalatinkova}
Let $\rho$ be a root of the polynomial $x^3-ax^2-(a+3)x-1$ where $a\in\Z_{\geq -1}$.
Up to multiplication by totally positive units, the indecomposables in the order $\mathbb{Z}[\rho]$ of $\Q(\rho)$ are $1$,
\begin{itemize}
    \item the element $1+\rho+\rho^2$ with minimal trace $2$,
    \item the elements in the set $$\blacktriangle=\{-v-w\rho+(v+1)\rho^2\,|\,0\leq v\leq a,\,v(a+2)+1\leq w\leq (v+1)(a+1)\},$$ all of which have minimal trace $1$.
\end{itemize}
\end{teo}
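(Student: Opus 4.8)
The plan is to run the geometric method recalled above for $K=\Q(\rho)$ in the monogenic case $\O_K=\Z[\rho]$, in four stages: fix a proper pair of totally positive units, write the two associated parallelepipeds explicitly in the integral basis $\{1,\rho,\rho^2\}$, enumerate their lattice points, and decide indecomposability of the surviving candidates via the codifferent and the minimal trace. For the first stage, Shanks' units $\rho$ and $\rho+1$ are multiplicatively independent, so $\varepsilon_1=\rho^2$ and $\varepsilon_2=(\rho+1)^2$ are totally positive units generating a rank-$2$ subgroup of $\O_K^{*}$; since $\{1,\rho^2,(\rho+1)^2\}$ spans the same space as $\{1,\rho,\rho^2\}$, conditions (1) and (2) of Definition~\ref{def:properpair} hold, and condition~(3) is arranged by reducing $\rho^2(\rho+1)^2$ modulo $f$ and, if the constant coefficient comes out positive, replacing $(\varepsilon_1,\varepsilon_2)$ by, say, $(\rho^2,(\rho+1)^{-2})$, for which one computes $\varepsilon_1\varepsilon_2=(\rho^2-(a+1)\rho-1)^2$ with constant coefficient $-a-1$.

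Next I would make the two parallelepipeds concrete. From $f(\rho)=0$ one gets $(\rho+1)^{-1}=-\rho^2+(a+1)\rho+2$ and $\rho(\rho+1)^{-1}=\rho^2-(a+1)\rho-1$, and squaring and reducing modulo $f$ expresses $\varepsilon_2^{\pm1}$ and $\varepsilon_1\varepsilon_2^{-1}$ as $\Z$-combinations of $1,\rho,\rho^2$ with coefficients polynomial in $a$. Plugging these nodes into $\mathcal{D}(1,\varepsilon_1,\varepsilon_2)$ and $\mathcal{D}(1,\varepsilon_1,\varepsilon_1\varepsilon_2^{-1})$ turns each parallelepiped into an explicit polytope in $\R^3$ whose facet inequalities are linear in the coordinates with coefficients polynomial in $a$. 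Enumerating the $\Z^3$-points subject to these inequalities then organizes the integer points into finitely many families indexed by $a$; discarding the ones that are not totally positive should leave exactly $\{1,\ 1+\rho+\rho^2\}\cup\blacktriangle$ together with some points that will turn out to be decomposable.

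It remains to separate indecomposables from decomposables among the surviving lattice points and to compute minimal traces. For $\O_K=\Z[\rho]$ the codifferent is $\O_K^{\vee}=f'(\rho)^{-1}\Z[\rho]$ with $f'(\rho)=3\rho^2-2a\rho-(a+3)$. For each $\alpha\in\blacktriangle$ I would exhibit an explicit totally positive $\delta\in\O_K^{\vee}$ with $\mathrm{Tr}(\delta\alpha)=1$; combined with the fact that $\mathrm{Tr}(\delta'\alpha)$ is a positive integer for every $\delta'\in\O_K^{\vee,+}$, this shows the minimal trace of $\alpha$ is exactly $1$, hence $\alpha$ is indecomposable. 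For every other surviving lattice point one writes down a decomposition $\alpha=\beta+\gamma$ with $\beta,\gamma\in\O_K^{+}$, removing it from the list. The delicate element is $\alpha_0=1+\rho+\rho^2$ (note $\mathrm{Tr}(\alpha_0)=a^2+3a+9=\Delta$): one first checks that no totally positive element of $\O_K^{\vee}$ pairs with $\alpha_0$ to trace $1$, so its minimal trace is $2$, attained by a suitable $\delta$; one then argues indecomposability directly, since a decomposition $\alpha_0=\beta+\gamma$ would force $\mathrm{Tr}(\delta\beta)=\mathrm{Tr}(\delta\gamma)=1$, making $\beta$ and $\gamma$ minimal-trace-$1$ and therefore (by the previous step) totally-positive-unit multiples of elements of $\blacktriangle$, and a finite check rules out every such pair summing to $\alpha_0$.

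The main obstacle is the middle stage: making the lattice-point count inside the ``twisted'' parallelepiped $\mathcal{D}(1,\varepsilon_1,\varepsilon_1\varepsilon_2^{-1})$ rigorous and uniform in $a$ --- the endpoints $v(a+2)+1$ and $(v+1)(a+1)$ of the range defining $\blacktriangle$ are precisely the thresholds at which a lattice point ceases to be indecomposable --- together with producing the decomposability certificates for the non-indecomposable points and the ad hoc argument pinning down $1+\rho+\rho^2$ as the unique extra indecomposable, of minimal trace $2$.
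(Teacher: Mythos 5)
First, a point of reference: this theorem is not proved in the present paper at all --- it is quoted from \cite{kalatinkova}, and the surrounding text only recalls the general machinery (proper pairs of totally positive units, the two parallelepipeds $\mathcal{D}(1,\varepsilon_1,\varepsilon_2)$ and $\mathcal{D}(1,\varepsilon_1,\varepsilon_1\varepsilon_2^{-1})$, and minimal traces computed via the codifferent). Your plan is exactly that machinery, with the same data the paper uses: $\varepsilon_1=\rho^2$, $\varepsilon_2=(1+\rho)^2=(\rho'')^{-2}$, and your computation $\varepsilon_1\varepsilon_2^{-1}=(\rho^2-(a+1)\rho-1)^2=-1-a-(a^2+3a+3)\rho+(a+2)\rho^2$ agrees with the node of the second parallelepiped in \eqref{eq:parallelepipeds}. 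So in strategy you are aligned with the source. (One small inaccuracy: condition (3) of Definition~\ref{def:properpair} concerns the coefficient of $1$ when $\varepsilon_1\varepsilon_2$ is written in the basis $\{1,\varepsilon_1,\varepsilon_2\}$, not the constant term in the power basis $\{1,\rho,\rho^2\}$; the paper avoids this by citing Thomas--Vasquez for properness.)

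As a proof, however, the proposal has a genuine gap: every substantive step is announced rather than carried out. The enumeration of the lattice points of the second parallelepiped --- which is precisely where the thresholds $v(a+2)+1$ and $(v+1)(a+1)$ defining $\blacktriangle$ come from --- is the technical heart of the argument; compare the parametrization via $t_1,t_2,t_3$ and the ceiling corrections $e_1,e_2$ that this paper must work through even in its own Section on the case $p=3$. You defer it entirely, and likewise produce no explicit $\delta\in\O_K^{\vee,+}$ with $\mathrm{Tr}(\delta\alpha)=1$ for $\alpha\in\blacktriangle$ and no decomposition certificates for the discarded lattice points. Finally, the ``finite check'' for $1+\rho+\rho^2$ is not finite as stated: a decomposition $1+\rho+\rho^2=\beta+\gamma$ only forces $\beta=\varepsilon\beta_0$ for some totally positive unit $\varepsilon$ and some $\beta_0\in\blacktriangle\cup\{1\}$, and one must first bound the admissible $\varepsilon$ (by comparing conjugates against those of $1+\rho+\rho^2$, in the spirit of the coefficient estimates in Proposition~\ref{proindecs0}) before any finite verification is possible. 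The skeleton is correct and matches the original proof; the content is missing.
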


\subsection{Simplest cubic fields and their monogenity}\label{sectscf}

A simplest cubic field is a number field $K=\mathbb{Q}(\rho)$, where $\rho$ is a root of the polynomial $$f(x)=x^3-ax^2-(a+3)x-1,\quad a\in\mathbb{Z}_{\geq -1}.$$ It is a cyclic cubic extension of $\mathbb{Q}$ (and then totally real).

Let $\rho'$ and $\rho''$ be the conjugates of $\rho$, that is, the other roots of $f$. When $a\geq7$, reordering these roots if necessary, we can assume that \cite{lemmermeyerpetho} $$a+1<\rho<a+1+\frac{2}{a},\quad-1-\frac{1}{a}<\rho'<-1-\frac{1}{2a},\quad-\frac{1}{a+2}<\rho''<-\frac{1}{a+3}.$$ We also have the explicit expressions $$\rho'=-1-\frac{1}{\rho}=a+2+a\rho-\rho^2\quad \text{and} \quad \rho''=-\frac{1}{1+\rho}=-2-(a+1)\rho+\rho^2.$$ Let us denote $\Delta\coloneqq a^2+3a+9$. We know that $\mathrm{disc}(f)=\Delta^2$. If $\Delta$ is square-free, then $\mathcal{O}_K=\mathbb{Z}[\rho]$. The converse does not hold in general. 

We say that $K$ is monogenic if there is an element $\gamma\in\mathcal{O}_K$ such that $\mathcal{O}_K=\mathbb{Z}[\gamma]$. In that case, $\{1,\gamma,\gamma^2\}$ is a power integral basis of $K$. Denote by $\mathfrak{c}$ the conductor of $K$. The monogenity of $K$ is characterized in \cite{kashiosekigawa} in the following way:

\begin{teo}\cite[Corollary 1.6]{kashiosekigawa}\label{charactmonog} The following are equivalent:
\begin{itemize}
    \item[1.] The field $K$ is monogenic.
    \item[2.] We have $a\in\{-1,0,1,2,3,5,12,54,66,1259,2389\}$ or $\frac{\Delta}{\mathfrak{c}}$ is a cube.
    \item[3.] We have $a\in\{-1,0,1,2,3,5,12,54,66,1259,2389\}$ or $a\not\equiv3,21\,(\mathrm{mod}\,27)$ and $v_p(\Delta)\not\equiv2\,(\mathrm{mod}\,3)$ for all $p\neq3$.
\end{itemize} In such a case, a power integral basis of $K$ is generated by $\gamma=\frac{\rho-b}{n}$, where $n=\sqrt[3]{\frac{\Delta}{27}}$ and $b\in\mathbb{Z}$ satisfies $b\equiv\frac{a}{3}\,(\mathrm{mod}\,n)$ if $3\mid n$ and $3a\equiv b\,(\mathrm{mod}\,n)$ otherwise.
\end{teo}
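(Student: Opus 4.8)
\emph{Overall strategy.} I would establish $(2)\Leftrightarrow(3)$ by a local analysis of $f$, and then $(1)\Leftrightarrow(2)$ by separating an elementary constructive implication from one hard effective‑Diophantine implication. The basic dictionary is this: since $K/\mathbb{Q}$ is cyclic cubic, the conductor--discriminant formula gives $d_K=\mathfrak{c}^2$, and combined with $\mathrm{disc}(f)=\Delta^2=[\mathcal{O}_K:\mathbb{Z}[\rho]]^2d_K$ it yields $\delta:=[\mathcal{O}_K:\mathbb{Z}[\rho]]=\Delta/\mathfrak{c}$. Hence ``$\Delta/\mathfrak{c}$ is a cube'' means exactly that $3\mid v_p(\delta)$ for every prime $p$, where $v_p(\delta)=v_p(\Delta)-v_p(\mathfrak{c})$ with $v_p(\mathfrak{c})\in\{0,1\}$ for $p\neq 3$ and $v_p(\mathfrak{c})\in\{0,2\}$ for $p=3$.

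\emph{The equivalence $(2)\Leftrightarrow(3)$.} I would compute $v_p(\delta)\bmod 3$ prime by prime. The order‑$3$ M\"obius map $x\mapsto -1-1/x$ cyclically permutes the roots $\rho,\rho',\rho''$ of $f$, so for $p\mid\Delta$ its reduction must fix the repeated residue root, whose fixed‑point equation is $x^2+x+1\equiv 0\pmod{p}$; a short argument using $f(0)=-1$ then forces $f\equiv(x-\zeta)^3\pmod{p}$ with $\zeta$ a primitive cube root of unity, so $p\equiv 1\pmod{6}$ (and $\zeta=1$ when $p=3$, recovering $3\mid a$). For $p\neq 3$, lift $\zeta$ to the actual root of unity $\omega\in\mathbb{Z}_p$: using $\omega^3=1$ one computes $f(\omega)=a-3\omega$ and $\Delta=(a-3\omega)(a-3\omega^2)$, and since $a-3\omega^2$ is a $p$‑adic unit (as $(a-3\omega)-(a-3\omega^2)=3(\omega^2-\omega)$ is a unit), $v_p(a-3\omega)=v_p(\Delta)$; translating to $g(t)=f(t+\omega)$ one finds that all three non‑leading coefficients of $g$ have $p$‑adic valuation exactly $v_p(\Delta)$, so the Newton polygon of $g$ at $p$ is the single segment from $(0,v_p(\Delta))$ to $(3,0)$. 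If $3\nmid v_p(\Delta)$ this makes $p$ totally and tamely ramified, whence $v_p(\mathfrak{c})=1$ and $v_p(\delta)=v_p(\Delta)-1$; if $3\mid v_p(\Delta)$ the slope is integral, the residual polynomial is $\equiv s^3+(\text{unit})\pmod{p}$ hence separable, $p$ is unramified, and $v_p(\delta)=v_p(\Delta)$. In all cases $3\mid v_p(\delta)\iff v_p(\Delta)\not\equiv 2\pmod{3}$. For $p=3$ the same translation gives $g(t)=f(t+1)=t^3+(3-a)t^2-3at-(2a+3)$ with $v_3(g(0))=v_3(2a+3)$, which is $1$ when $a\equiv 0,6\pmod{9}$, is $2$ when $a\equiv 3,21\pmod{27}$, and is $\geq 3$ when $a\equiv 12\pmod{27}$; reading off the Newton polygon together with the (possibly wild) ramification at $3$ — where $v_3(\mathfrak{c})\in\{0,2\}$ — one gets $v_3(\delta)=0$ for $a\equiv 0,6\pmod{9}$, $v_3(\delta)=3$ for $a\equiv 12\pmod{27}$, and $v_3(\delta)=1$ precisely for $a\equiv 3,21\pmod{27}$. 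Thus $3\mid v_3(\delta)\iff a\not\equiv 3,21\pmod{27}$, which together with the case $p\neq 3$ is exactly the equivalence of (2) and (3). The one genuinely delicate point is that the residues $a\equiv 3,21$ and $a\equiv 12\pmod{27}$ all give $v_3(\Delta)=3$ but differ in whether $3$ ramifies, which is why the statement needs a congruence modulo $27$.

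\emph{The implication $(2)\Rightarrow(1)$.} If $a$ is one of the eleven listed values one simply exhibits a power integral basis (a finite check). Otherwise $\delta=n^3$; put $\gamma=(\rho-b)/n$, so that the minimal polynomial of $\gamma$ is $n^{-3}f(nx+b)$, with coefficients $1$, $(3b-a)/n$, $(3b^2-2ab-a-3)/n^2$, $f(b)/n^3$. The prime‑by‑prime description of $\delta$ from the previous step shows that a suitable integer $b$ (satisfying the stated congruences $b\equiv a/3\pmod{n}$ when $3\mid n$, resp.\ $3b\equiv a\pmod{n}$ otherwise) makes all of these integral, so $\gamma\in\mathcal{O}_K$; then $\mathbb{Z}[\gamma]\subseteq\mathcal{O}_K$ has discriminant $\Delta^2/n^6=\Delta^2/\delta^2=d_K$, forcing $\mathbb{Z}[\gamma]=\mathcal{O}_K$.

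\emph{The implication $(1)\Rightarrow(2)$ — the main obstacle.} What remains is to show that a monogenic $K$ with $a$ outside the eleven exceptions forces $\delta$ to be a cube; equivalently, that every generator of $\mathcal{O}_K$ is, up to sign and integer translation, of the form $(\rho-b)/n$. Writing a hypothetical generator in an integral basis adapted to the inclusion $\mathbb{Z}[\rho]\subseteq\mathcal{O}_K$, the condition $[\mathcal{O}_K:\mathbb{Z}[\gamma]]=1$ becomes an index‑form equation, and the classical reduction of a cubic index‑form equation to binary cubic Thue equations (Ga\'al) turns it into a family $F_a(x,y)=\pm 1$ depending on $a$. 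The plan is then to bound all solutions effectively by Baker's method on linear forms in logarithms — here one crucially exploits the explicitly known units $\rho,\rho'$ of the simplest cubic fields to keep the bound on $a$ small — and to dispose of the remaining finite range by computer, the eleven exceptional $a$ being precisely the ones where extra solutions survive. This uniform effective control of the associated Thue equations is where essentially all the difficulty sits; everything else in the proof is local algebra and a bounded computation.
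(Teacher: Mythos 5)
There is nothing in the paper to compare your argument against: Theorem \ref{charactmonog} is imported verbatim from Kashio--Sekigawa \cite[Corollary 1.6]{kashiosekigawa} and the paper offers no proof of it, only the surrounding dictionary $[\mathcal{O}_K:\mathbb{Z}[\rho]]=\Delta/\mathfrak{c}$ via the conductor--discriminant formula (which you reproduce correctly). Judged on its own, the local part of your proposal is essentially sound. The fixed-point argument for the order-$3$ M\"obius map together with $f(0)=-1$ does force $f\equiv(x-\zeta)^3\pmod p$ with $\zeta^2+\zeta+1\equiv 0$, the identity $f(\omega)=a-3\omega$ and $N(a-3\omega)=\Delta$ checks out, and the Newton-polygon computation of $v_p(\delta)\bmod 3$ for $p\neq 3$ and the case analysis modulo $27$ at $p=3$ reproduce exactly the content of Proposition \ref{formconductor} and Remark \ref{rmk:3valconductor}. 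One small overstatement: the middle coefficient $f'(\omega)=-(a+6)-(2a+3)\omega$ need not have valuation \emph{exactly} $v_p(\Delta)$; substituting $a\equiv 3\omega$ only gives $v_p(f'(\omega))\ge v_p(\Delta)$, which is all the single-segment Newton polygon requires. The implication $(2)\Rightarrow(1)$ via $\gamma=(\rho-b)/n$ and the discriminant count $\Delta^2/n^6=\mathfrak{c}^2$ is also fine in outline.

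The genuine gap is $(1)\Rightarrow(2)$, and you have identified it without closing it. Saying that the index-form equation reduces to a parametric family of Thue equations $F_a(x,y)=\pm1$ and that Baker's method plus the explicit units $\rho,\rho'$ ``keeps the bound on $a$ small'' is a research program, not a proof: the uniform effective resolution of that family over all $a$, and the verification that the only surviving sporadic solutions occur for the eleven listed parameters, is precisely the content of the cited corollary (and of the earlier work of Lemmermeyer--Peth\H{o} and Ga\'al on which it builds). Nothing in your sketch produces the explicit bound on $a$, the reduction of that bound to a computationally feasible range, or the finite check. Note also that, per Remark \ref{rmkexcpmonog}, the structural reason for the eleven exceptions is that those $a$ define fields admitting a second defining parameter $a'$, one of which satisfies the cube condition; your gloss that these are ``where extra solutions survive'' is compatible with this but does not explain or verify the list. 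As it stands the proposal proves $(2)\Leftrightarrow(3)$ and $(2)\Rightarrow(1)$ and defers the remaining, and hardest, implication entirely to unexecuted machinery.
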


\begin{rmk}\normalfont\label{rmkexcpmonog} The list of possible values for $a$ in the second and the third statement in Theorem~\ref{charactmonog} are the ones for which the corresponding simplest cubic field $K$ is also defined by some other value $a'$ in the same list. Hence, its presence is due to the fact that for such a simplest cubic field $K$, one of the values satisfies the conditions of monogenity and the other one does not, resulting that $K$ is monogenic.
\end{rmk}

%We proceed to reinterpret the relevant number $\frac{\Delta}{\mathfrak{c}}$ in the second statement of Theorem \ref{charactmonog}. Since $K$ is an abelian extension of $\mathbb{Q}$, from Kronecker-Weber theorem we know that it is contained in some cyclotomic field. Since in addition $K$ is totally real, $\mathfrak{c}$ is the smallest integer $n\in\mathbb{Z}_{>0}$ such that $K$ is contained in the $n$-th cyclotomic field. In particular, $\mathfrak{c}$ is positive. Moreover, it is related to the discriminant by the so called conductor-discriminant formula, $\mathfrak{c}^2=\mathrm{disc}(K)$. Together with the well known formula $\mathrm{disc}(f)=[\mathcal{O}_K:\mathbb{Z}[\rho]]^2\mathrm{disc}(K)$, we deduce that $\mathfrak{c}$ divides $\Delta$ and $$[\mathcal{O}_K:\mathbb{Z}[\rho]]=\frac{\Delta}{\mathfrak{c}}.$$

Since $K$ is totally real, its conductor $\mathfrak{c}$ is positive.  Moreover, it is related to the discriminant by the so called conductor-discriminant formula, $\mathfrak{c}^2=\mathrm{disc}(K)$. Taking also into account the well known formula \begin{equation}\label{eq:reldiscs}\mathrm{disc}(f)=[\mathcal{O}_K:\mathbb{Z}[\rho]]^2\mathrm{disc}(K),\end{equation} we deduce that $\mathfrak{c}$ divides $\Delta$ and $$[\mathcal{O}_K:\mathbb{Z}[\rho]]=\frac{\Delta}{\mathfrak{c}}.$$

\begin{rmk}\normalfont Note that $K$ may be monogenic even if $\mathcal{O}_K\neq\mathbb{Z}[\rho]$, namely whenever $[\mathcal{O}_K:\mathbb{Z}[\rho]]$ is a non-trivial cube. This means that $\mathcal{O}_K=\mathbb{Z}[\gamma]$ for some $\gamma\neq\rho$. However, we will usually refer to the case $\mathcal{O}_K=\mathbb{Z}[\rho]$ as \textit{the monogenic case}.
\end{rmk}

In order to compute effectively the generalized module index $[\mathcal{O}_K:\mathbb{Z}[\rho]]$, we denote $\Delta=bc^3$ with $b,c>0$ coprime integers and $b$ cube-free.
Then:

\begin{pro}\cite[Remark 1.5]{kashiosekigawa}\label{formconductor} The conductor can be described as follows: $$\mathfrak{c}=\begin{cases}\prod_{p\mid b}p & \hbox{if }3\nmid a\hbox{ or }a\equiv12\,(\mathrm{mod}\,27),\\ 3^2\prod_{p\mid b,p\neq3}p & \hbox{otherwise}.\end{cases}$$
\end{pro}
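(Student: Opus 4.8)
The final statement to prove is Proposition~\ref{formconductor}, the description of the conductor $\mathfrak{c}$ in terms of the factorization $\Delta = bc^3$ with $b$ cube-free and $\gcd(b,c)=1$.

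\medskip

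The plan is to exploit the conductor-discriminant formula together with the known ramification data of simplest cubic fields. Since $K/\mathbb{Q}$ is cyclic of degree $3$, a prime $p$ ramifies in $K$ if and only if $p \mid \mathfrak{c}$, and when it does ramify it is totally ramified with $v_p(\mathrm{disc}(K)) = 2$; hence $v_p(\mathfrak{c}) = 1$ for all $p \mid \mathfrak{c}$ with $p \neq 3$, while $v_3(\mathfrak{c}) \in \{0,1,2\}$ (the conductor of a cyclic cubic field is squarefree away from $3$ and divisible by at most $3^2$). So the whole question is: (a) which primes $p \neq 3$ ramify, and (b) whether $3$ ramifies and if so whether $v_3(\mathfrak{c}) = 1$ or $2$. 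The relation $[\mathcal{O}_K:\mathbb{Z}[\rho]] = \Delta/\mathfrak{c}$ derived above, combined with $\mathrm{disc}(f) = \Delta^2$, means that $\mathfrak{c}$ is exactly the ``radical-like'' part of $\Delta$ that survives, and the index $\Delta/\mathfrak{c}$ must itself be a perfect square (being a quotient of discriminants) — this is the main structural constraint I would use.

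\medskip

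First I would handle the primes $p \neq 3$. For such $p$, I claim $p \mid \mathfrak{c}$ if and only if $p \mid b$, i.e. if and only if $v_p(\Delta) \not\equiv 0 \pmod 3$. One direction: if $v_p(\Delta) = 3m$ then $p^{3m}$ can be entirely absorbed into the index squared only if $3m$ is even, but more directly one checks (e.g. via Kashio--Sekigawa's local analysis, or by a Newton-polygon / Montes-type argument on $f$ at $p$) that $p$ is unramified precisely when $v_p(\Delta) \equiv 0 \pmod 3$, since then the cubic singularity in the discriminant comes entirely from the index and not from the different. Conversely if $v_p(\Delta) \in \{1,2\} \pmod 3$ then $p$ must ramify because an unramified prime contributes $0$ to $v_p(\mathrm{disc}(K))$ and a multiple of (a suitable even number) to the index, which cannot account for a valuation $\not\equiv 0 \pmod 3$ while keeping $v_p(\mathrm{disc} f) = 2 v_p(\Delta)$ consistent with $v_p(\mathrm{disc} f) = 2 v_p(\mathfrak c) + 2 v_p([\mathcal O_K:\mathbb Z[\rho]])$. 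Either way this pins down $v_p(\mathfrak{c}) = 1$ for $p \mid b$, $p \neq 3$, and $v_p(\mathfrak{c}) = 0$ for $p \nmid b$, $p \neq 3$; so the contribution of primes $\neq 3$ to $\mathfrak{c}$ is $\prod_{p \mid b,\, p \neq 3} p$.

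\medskip

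The delicate part — and the main obstacle — is the prime $3$, which always divides $\Delta = a^2+3a+9$ when $3 \mid a$ (then $9 \mid \Delta$), and satisfies $v_3(\Delta) = 2$ when $3 \nmid a$. Here I would split into cases exactly as in the statement. If $3 \nmid a$: then $v_3(\Delta) = 2$, and one must decide whether this $9$ goes into $\mathfrak{c}$ (giving $v_3(\mathfrak{c}) = 2$, the ``$3^2$'' option, but the statement says this case is the ``$\prod_{p \mid b} p$'' branch, so actually $3 \mid b$ and $v_3(\mathfrak c)=1$? No — $v_3(\Delta)=2$ means $v_3(b)=2$, and the first branch gives $\mathfrak c = \prod_{p\mid b} p$ which includes a single factor $3$) — so I must verify $v_3(\mathfrak{c}) = 1$ when $3 \nmid a$, equivalently that $3$ is tamely... no, $3$ in a cubic field is wildly ramified, so $v_3(\mathfrak c)=1$ must be checked by the local condition at $3$; Kashio--Sekigawa's computation shows the relevant congruence $a \equiv \pm 1 \pmod 3$ together with $v_3(\Delta)=2$ forces conductor exponent $1$ at $3$, which I would cite or re-derive from the local different of $f$ at $3$. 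The case $a \equiv 12 \pmod{27}$ is the subtle one where $3 \mid a$ but the ``extra'' factor of $3$ still behaves like the $3 \nmid a$ situation — here $v_3(\Delta) \geq 2$ but one shows $3$ still contributes only its radical to $\mathfrak{c}$ (so it sits inside $\prod_{p \mid b} p$), which again follows from the explicit local analysis; this is precisely the exceptional congruence class isolated in Theorem~\ref{charactmonog}. Finally, in the remaining case ($3 \mid a$, $a \not\equiv 12 \pmod{27}$), one shows $v_3(\mathfrak{c}) = 2$ exactly, so $\mathfrak{c} = 3^2 \prod_{p \mid b,\, p \neq 3} p$; combined with $v_3(\Delta) \equiv 0$ or $2 \pmod 3$ depending on the finer congruence, this gives that $\Delta/\mathfrak{c}$ has the right $3$-adic valuation to be a square (or a cube in the monogenic subcase). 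Throughout, I would lean on \cite{kashiosekigawa} for the local computations at $3$ rather than redo them, and the only genuine work is assembling these local conductor exponents into the two-case product formula and checking consistency with $\mathrm{disc}(f) = \Delta^2$.
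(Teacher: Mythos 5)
The paper does not prove this proposition at all: it is quoted verbatim from \cite[Remark 1.5]{kashiosekigawa}, so there is no internal proof to compare your argument against. Judged on its own terms, your proposal has several concrete errors, concentrated at the prime $3$. First, if $3\nmid a$ then $\Delta=a^2+3a+9\equiv a^2\equiv 1\ (\mathrm{mod}\ 3)$, so $v_3(\Delta)=0$ --- not $2$ as you assert --- and the entire discussion of ``whether the $9$ goes into $\mathfrak{c}$'' in that branch is vacuous: $3\nmid b$, so the product $\prod_{p\mid b}p$ simply contains no factor of $3$. Second, $v_3(\mathfrak{c})=1$ is never a possibility for a cyclic cubic field: if $3$ ramifies it is wildly ramified, forcing conductor exponent $2$ at $3$; the paper records exactly this ($v_3(\mathfrak{c})\in\{0,2\}$) in Remark \ref{rmk:3valconductor}. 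Third, when $a\equiv 12\ (\mathrm{mod}\ 27)$ one computes $v_3(\Delta)=3$ exactly, so $3\mid c$ and $3\nmid b$; the content of that branch is that $3$ is \emph{unramified}, whereas you claim $3$ ``sits inside $\prod_{p\mid b}p$,'' which is the opposite of what happens.

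Two further problems. Your announced ``main structural constraint'' --- that the index $\Delta/\mathfrak{c}=[\mathcal{O}_K:\mathbb{Z}[\rho]]$ must be a perfect square because it is a quotient of discriminants --- is false: the formula is $\mathrm{disc}(f)=[\mathcal{O}_K:\mathbb{Z}[\rho]]^2\,\mathrm{disc}(K)$, i.e.\ $\Delta^2=(\Delta/\mathfrak{c})^2\mathfrak{c}^2$, which holds identically and imposes no squareness on the index (Example \ref{examplep=3} has index $3$). Finally, for $p\neq 3$ the equivalence ``$p$ ramifies $\Longleftrightarrow v_p(\Delta)\not\equiv 0\ (\mathrm{mod}\ 3)$'' is precisely the nontrivial content of the proposition, and your proposal only asserts it, deferring to ``Kashio--Sekigawa's local analysis'' --- that is, to the very result being proved. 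A genuine proof would need the local computation at each ramified prime (e.g.\ via the splitting of $f$ modulo $p$, or the resolvent/Kummer-theoretic description of the cyclic cubic field attached to $\Delta$), none of which is supplied.
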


\begin{rmk}\normalfont\label{rmk:3valconductor} If $3\nmid a$, we have that $\Delta=a^2+3a+9\equiv1\,(\mathrm{mod}\,3)$, and if $a\equiv12\,(\mathrm{mod}\,27)$, then $\Delta\equiv0\,(\mathrm{mod}\,27)$. In both cases, we see that $3\nmid\mathfrak{c}$. We deduce that the conductor of a simplest cubic field $K$ satisfies either that $3\nmid\mathfrak{c}$ or $v_3(\mathfrak{c})=2$.
\end{rmk}

\section{Non-monogenic simplest cubic fields with index $3$}\label{sect:scfprime}

In this part, we will characterize the simplest cubic fields $K=\mathbb{Q}(\rho)$ such that $[\mathcal{O}_K:\mathbb{Z}[\rho]]=3$. These are the simplest cubic fields which have an integral basis
\begin{equation}\label{intbasis}
    B_3\coloneqq\Big\{1,\rho,\frac{1+\rho+\rho^2}{3}\Big\}.
\end{equation}
Indeed, it may be checked, for instance using mathematical software, that $\mathrm{disc}\Big(1,\rho,\frac{1+\rho+\rho^2}{3}\Big)=\Big(\frac{\Delta}{3}\Big)^2$. From the formula \eqref{eq:reldiscs} we find that $B_3$ is an integral basis for $K$.

We can easily check that the minimal polynomial of $\frac{1+\rho+\rho^2}{3}$ is
\[
x^3-\frac{a^2+3a+9}{3}x^2+\frac{2a^2+6a+18}{3^2}x-\frac{a^2+3a+9}{3^3}
\]
Suppose that $a\equiv3$ or $21$ $(\mathrm{mod}\,27)$. A direct computation shows that $\Delta=a^2+3a+9$ is divisible by $27$, implying that the above polynomial has integer coefficients and, thus, $\frac{1+\rho+\rho^2}{3}$ is an algebraic integer.  

In fact, the simplest cubic fields with integral basis $B_3$ can be characterized as follows.

\begin{pro} The simplest cubic fields $K$ with integral basis $B_3$ are the ones for which $a\equiv3,21\,(\mathrm{mod}\,27)$, $a>12$ and $\frac{\Delta}{27}$ is square-free.
\end{pro}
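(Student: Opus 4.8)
The plan is to translate the condition ``$K$ has integral basis $B_3(1,1)$'' into arithmetic statements about the conductor $\mathfrak{c}$, relying on three facts already available: the conductor--discriminant formula $\mathrm{disc}(K)=\mathfrak{c}^2$ together with $[\mathcal{O}_K:\mathbb{Z}[\rho]]=\Delta/\mathfrak{c}$; the identity $\mathrm{disc}\bigl(1,\rho,\tfrac{1+\rho+\rho^2}{3}\bigr)=(\Delta/3)^2$ from the discriminant computation of Section~\ref{sect:scfprime}; and Proposition~\ref{formconductor}, in whose notation $\Delta=bc^3$ with $b$ cube-free and $\gcd(b,c)=1$. Throughout I keep the standing convention that $a$ is not one of the exceptional values of Theorem~\ref{charactmonog}, and I explain at the end why this is exactly what produces $a>12$.

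For the forward implication I would argue as follows. Assuming $B_3(1,1)=\{1,\rho,\omega\}$ with $\omega=\tfrac{1+\rho+\rho^2}{3}$ is an integral basis, we get $\mathfrak{c}^2=\mathrm{disc}(K)=(\Delta/3)^2$, so $\mathfrak{c}=\Delta/3$ by positivity; in particular $3\mid\Delta$ and $[\mathcal{O}_K:\mathbb{Z}[\rho]]=\Delta/\mathfrak{c}=3$. Since $\Delta\equiv a^2\pmod 3$ we get $3\mid a$, so $a=3m$ and $\Delta=9(m^2+m+1)$, whence $v_3(\mathfrak{c})=v_3(\Delta)-1\geq 1$; by Remark~\ref{rmk:3valconductor} this forces $v_3(\mathfrak{c})=2$, i.e. $v_3(\Delta)=3$. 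A quick check shows $v_3(m^2+m+1)=1$ precisely when $m\equiv 1\pmod 3$, so $a\equiv 3,12$ or $21\pmod{27}$. The residue $a\equiv 12\pmod{27}$ is impossible: it puts us in the first case of Proposition~\ref{formconductor}, so $\mathfrak{c}=\prod_{p\mid b}p$, but $v_3(\Delta)=3$ together with $b$ cube-free forces $v_3(b)=0$, hence $3\nmid\mathfrak{c}$, contradicting $v_3(\mathfrak{c})=2$. So $a\equiv 3$ or $21\pmod{27}$, and we are in the second case, $\mathfrak{c}=9\prod_{p\mid b,\,p\neq 3}p$. Now $v_3(\Delta)=3$ and $b$ cube-free give $v_3(b)=0$, $v_3(c)=1$; writing $c=3c'$ we have $\Delta/27=bc'^{3}$ with $\gcd(b,c')=1$, and comparing $\mathfrak{c}=9\prod_{p\mid b}p$ with $\mathfrak{c}=\Delta/3=9\cdot(\Delta/27)=9bc'^{3}$ yields $\prod_{p\mid b}p=bc'^{3}$. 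Since $\prod_{p\mid b}p$ divides $b$, this forces $c'=1$ and $b$ square-free, so $\Delta/27=b$ is square-free.

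For the converse I would start from the assumption $a\equiv 3$ or $21\pmod{27}$, $a>12$, $\Delta/27$ square-free. Substituting $a=3+27t$ or $a=21+27t$ gives $\Delta=27(1+9t+27t^2)$, respectively $\Delta=27(19+45t+27t^2)$, and in both cases the bracketed factor is $\equiv 1\pmod 3$; hence $v_3(\Delta)=3$ and we write $\Delta=27d$ with $d=\Delta/27$ square-free. Because $3\mid a$ and $a\not\equiv 12\pmod{27}$, Proposition~\ref{formconductor} gives $\mathfrak{c}=9\prod_{p\mid b,\,p\neq 3}p$; the same valuation bookkeeping as above (namely $v_3(b)=0$, $c=3c'$, $d=bc'^{3}$ with $\gcd(b,c')=1$, and $d$ square-free) shows $c'=1$, $b=d$ square-free, and therefore $\mathfrak{c}=9d=\Delta/3$. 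It then remains to exhibit the integral basis: the $(k,l)=(1,1)$ specialisation of the minimal polynomial recorded in Section~\ref{sect:scfprime} is $x^3-\tfrac{\Delta}{3}x^2+\tfrac{2\Delta}{9}x-\tfrac{\Delta}{27}$, which has integer coefficients since $27\mid\Delta$; thus $\omega\in\mathcal{O}_K$, and as $K/\mathbb{Q}$ is cyclic of prime degree $3$ and $\omega\notin\mathbb{Q}$, the set $\{1,\rho,\omega\}$ is a $\mathbb{Q}$-basis of $K$ consisting of algebraic integers with discriminant $(\Delta/3)^2=\mathfrak{c}^2=\mathrm{disc}(K)$, hence an integral basis.

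The step I expect to require care is not the valuation arithmetic but the inequality $a>12$: what the two implications really prove is that ``$K$ has integral basis $B_3(1,1)$'' is equivalent to ``$a\equiv 3,21\pmod{27}$ and $\Delta/27$ square-free'', with no size restriction. The inequality $a>12$ only serves to discard $a=3$, which is the unique integer $a\geq -1$ in those residue classes with $a\leq 12$; for it $K=\mathbb{Q}(\zeta_9)^{+}$, already parametrised by $a=0$, so it is removed by our standing convention (a direct check shows no other member of the exceptional list of Theorem~\ref{charactmonog} lies in the classes $3,21\pmod{27}$). Everything else reduces to tracking $3$-adic valuations, and the single general valuation, through the two cases of Proposition~\ref{formconductor}.
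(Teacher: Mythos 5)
Your proof is correct in substance and follows the same overall strategy as the paper's: both directions hinge on the identity $\mathrm{disc}\bigl(1,\rho,\tfrac{1+\rho+\rho^2}{3}\bigr)=(\Delta/3)^2$, the conductor--discriminant formula, and Proposition \ref{formconductor}, and your converse is essentially identical to the paper's. Where you diverge is in the forward direction. The paper obtains $a\equiv 3,21\pmod{27}$ and $a>12$ in one stroke from the monogenity characterization: it argues that $\Delta=3\mathfrak{c}$ forces $K$ to be non-monogenic, hence by Theorem \ref{charactmonog} the parameter $a$ lies outside the exceptional list and satisfies the congruence. You instead get the congruence by elementary $3$-adic bookkeeping ($v_3(\Delta)=3$ gives $a\equiv 3\pmod 9$, and the case split in Proposition \ref{formconductor} eliminates $a\equiv 12\pmod{27}$), which is self-contained and avoids the monogenity theorem entirely. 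The trade-off is that your argument cannot produce $a>12$ on its own --- you obtain it only from your standing convention that $a$ avoids the exceptional list. You are right to flag this explicitly: $a=3$ genuinely satisfies the congruence, has $\Delta/27=1$ square-free, and $\bigl\{1,\rho,\tfrac{1+\rho+\rho^2}{3}\bigr\}$ really is an integral basis of the corresponding field $\mathbb{Q}(\zeta_9)^{+}$ (the minimal polynomial of $\tfrac{1+\rho+\rho^2}{3}$ is $x^3-9x^2+6x-1$ and the basis discriminant is $81=\mathrm{disc}(K)$), so the literal biconditional fails at $a=3$ and some exclusion of the exceptional parameters is unavoidable. Note that the paper's own step ``$\Delta=3\mathfrak{c}$, so $K$ is not monogenic'' silently makes the same assumption, since by Theorem \ref{charactmonog} a field whose parameter lies in the exceptional list is monogenic regardless of whether $\Delta/\mathfrak{c}$ is a cube. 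So your treatment of $a>12$ is, if anything, the more transparent one; just be sure to state the convention as a hypothesis rather than leaving it as a side remark.
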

\begin{proof}
Assume that $K$ has an integral basis as in the statement. Then $\mathrm{disc}(K)=\mathrm{disc}(1,\rho,\frac{1+\rho+\rho^2}{3})=\Big(\frac{\Delta}{3}\Big)^2$, so $\Delta=3\mathfrak{c}$ and $K$ is not monogenic. In particular, $3$ divides $\Delta=a^2+3a+9$, so $3\mid a$ and $9\mid\Delta$.  Thus $3\mid\mathfrak{c}$, so Remark \ref{rmk:3valconductor} gives that $9\mid\mathfrak{c}$, whence $27\mid\Delta$. Since $p^2$ does not divide $\mathfrak{c}$ for every prime $p\neq 3$, we obtain that $\frac{\Delta}{27}$ is square-free. On the other hand, by Theorem \ref{charactmonog} (3), the non-monogenity of $K$ gives that $a\equiv3$ or $21$ $(\mathrm{mod}\,27)$. Now, since $K$ is not monogenic, $K$ is defined by a single value of $a$. That is, $a\notin\{-1,0,1,2,3,5,12,54,66,1259,2389\}$. But in this list no element greater than $12$ is congruent to $3$ or $21$ mod $27$, so $a>12$.

Conversely, let $K$ be a simplest cubic field defined by $a$ such that $a\equiv3,21\,(\mathrm{mod}\,27)$ (so that $\Delta$ is divisible by $27$), $a>12$ and $\frac{\Delta}{27}$ is square-free. Then, we can write $\Delta=27d$ with $d$ square-free and from Proposition \ref{formconductor} we see that $\mathfrak{c}=9d$, so $\delta=3$. In particular, $K$ is not monogenic. On the other hand, $\frac{1+\rho+\rho^2}{3}$ is an algebraic integer because the coefficients of its minimal polynomial over $\mathbb{Q}$ are integers. Now, $\mathrm{disc}(1,\rho,\frac{1+\rho+\rho^2}{3})=\Big(\frac{\Delta}{3}\Big)^2=\mathfrak{c}^2$, so $K$ has integral basis $\Big\{1,\rho,\frac{1+\rho+\rho^2}{3}\Big\}$.
\end{proof}

Now, we will analyze some properties of this family of simplest cubic fields. First, the condition that $\frac{\Delta}{27}$ is square-free is actually needed.

\begin{rmk}\normalfont Not all simplest cubic fields with $a\equiv3,21\,(\mathrm{mod}\,27)$ satisfy that $\frac{\Delta}{27}$ is square-free. For instance, if $a=678\equiv3\,(\mathrm{mod}\,27)$, then $v_7(\Delta)=2$. In that case, the integral basis of $K$ is $$\Big\{1,\rho,\frac{4+10\rho+\rho^2}{21}\Big\}.$$
\end{rmk}

Let $K$ be a simplest cubic field with integral basis $B_3$ and write $a=27t+r$, where $r\in\{3,21\}$. Then, $$\frac{\Delta}{27}=\begin{cases}
27t^2+9t+1 &\hbox{ if }a\equiv3\,(\mathrm{mod}\,27), \\
27t^2+45t+19 &\hbox{ if }a\equiv21\,(\mathrm{mod}\,27).
\end{cases}$$ These are quadratic polynomials in $t$, and it can be easily shown that they are irreducible. Moreover, for example, their value for $t=0$ is square-free. So, by the result of Nagel \cite{nagel}, they take infinitely many square-free values. Then, we have infinitely many simplest cubic fields within this family.

This family of simplest cubic fields is just the family considered by C\'anovas Orvay in \cite[Theorem 3]{canovasorvay}. 

\begin{lema} The simplest cubic fields $K$ with integral basis $B_3$ are just the fields of the form $K=\mathbb{Q}(\theta)$ with $\theta^3-p\theta+pq=0$, where $p=9d$ with $d\in\mathbb{Z}$ square-free and $q\in\mathbb{Z}$ is such that $q>2$, $3\nmid q$ and $4p-27q^2=9$.
\end{lema}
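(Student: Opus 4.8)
The plan is to exhibit an explicit $\Q$-linear change of variable identifying the two families. The identity hiding behind the lemma is that the substitution $\theta=\frac{a}{3}-\rho$ turns the simplest cubic polynomial $f(x)=x^3-ax^2-(a+3)x-1$ into a depressed cubic $x^3-px+pq$, under the dictionary $p=\frac{\Delta}{3}$ and $q=\frac{2a+3}{9}$; conversely $\rho=\frac{a}{3}-\theta$ with $a=\frac{9q-3}{2}$ undoes it. Both substitutions are defined over $\Q$, so they do not alter the generated field, and the whole proof reduces to matching the arithmetic side conditions on the two sides.

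First I would prove the forward implication. Assume $K$ has integral basis $B_3(1,1)$; by the previous Proposition this means $a\equiv 3$ or $21\pmod{27}$, $a>12$, and $d:=\frac{\Delta}{27}$ is square-free. Since $a\equiv 3\pmod 9$ in both cases, $q:=\frac{2a+3}{9}$ is an integer; $a>12$ (in fact $a\ge 21$) gives $q\ge 5>2$, and $3\mid q$ would force $27\mid 2a+3$, i.e. $a\equiv 12\pmod{27}$, which is excluded, so $3\nmid q$. Put $p:=9d$. A one-line expansion gives $4p-27q^2=\frac{4(a^2+3a+9)-(2a+3)^2}{3}=\frac{27}{3}=9$. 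Setting $\theta:=\frac{a}{3}-\rho\in K$ and substituting $\rho=\frac{a}{3}-\theta$ into $f$, the $\theta^2$-coefficient vanishes because $\mathrm{Tr}(\rho)=a$, the coefficient of $\theta$ equals $-\frac{\Delta}{3}=-9d=-p$, and the constant term equals $\frac{(2a+3)\Delta}{27}=(2a+3)d=pq$, using $2a^3+9a^2+27a+27=(2a+3)\Delta$. Hence $\theta^3-p\theta+pq=0$, and $K=\Q(\rho)=\Q(\theta)$ has the required form with all stated conditions satisfied.

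For the converse, start from $K=\Q(\theta)$ with the stated data. From $4p-27q^2=9$ and $p=9d$ one gets $4d=3q^2+1$, which forces $q$ odd; hence $a:=\frac{9q-3}{2}\in\Z$ and $\frac{a}{3}=\frac{3q-1}{2}\in\Z$. Then $q\ge 5$ gives $a\ge 21>12$, and since $q$ is odd and coprime to $3$ we have $q\equiv 1$ or $5\pmod 6$, whence $a\equiv 3$ or $21\pmod{27}$. A short computation shows $a^2+3a+9=27d$, so $\frac{\Delta}{27}=d$ is square-free. Now $\rho:=\frac{a}{3}-\theta\in K$ is, by the same algebraic identity read backwards, a root of $f(x)=x^3-ax^2-(a+3)x-1$; since $f$ is irreducible over $\Q$ (it is a simplest cubic polynomial), $[\Q(\rho):\Q]=3$ and $K=\Q(\rho)$ is the simplest cubic field of parameter $a$, so the previous Proposition gives that it has integral basis $B_3(1,1)$.

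There is no genuine conceptual obstacle here; the work lies entirely in (i) discovering the correct substitution and parameter dictionary, and (ii) being meticulous with the integrality and congruence conditions so that the two parametrizations correspond with no stray cases — in particular checking that ``$q$ odd'', ``$q>2$'' and ``$3\nmid q$'' on the C\'anovas Orvay side translate exactly into ``$\frac{a}{3}\in\Z$'', ``$a>12$'' and ``$a\equiv 3,21\pmod{27}$'' on the Shanks side, and that the sign ambiguity in $\theta=\pm\bigl(\frac{a}{3}-\rho\bigr)$ is resolved precisely by the sign $+pq$ of the constant term.
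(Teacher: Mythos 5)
Your proof is correct and follows essentially the same route as the paper's: the substitution $\theta=\frac{a}{3}-\rho$ with the dictionary $p=\frac{\Delta}{3}$, $q=\frac{2a+3}{9}$ (and its inverse $a=\frac{9q-3}{2}$), together with the verification $4p-27q^2=9$ and the translation of the congruence/integrality conditions. The only difference is cosmetic: you spell out a few checks (such as deducing that $q$ is odd from $4d=3q^2+1$) that the paper leaves implicit.
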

\begin{proof}
Let $K=\mathbb{Q}(\rho)$ be a simplest cubic field as in the statement. Setting $\theta=-\rho+\frac{a}{3}$, we have $K=\mathbb{Q}(\theta)$. Let $p=\frac{a^2+3a+9}{3}$ and $q=\frac{2a+3}{9}$. Since $\Delta=27d$, we have \begin{equation*}
    \begin{split}
        \theta^3-p\theta+pq&=-\rho^3+\rho^2a-\rho\frac{a^2}{3}+\frac{a^3}{27}+\Big(\frac{a^2}{3}+a+3\Big)\rho-\frac{(a^2+3a+9)(a-3)}{27}\\&=-\rho^3+a\rho^2+(a+3)\rho+\frac{a^3}{27}-\frac{a^3-27}{27}\\&=-\rho^3+a\rho^2+(a+3)\rho+1=0,
    \end{split}
\end{equation*} proving that $\theta$ has minimal polynomial $x^3-px+pq$ over $\mathbb{Q}$. Moreover, we have that $$4p-27q^2=4\Big(\frac{a^2}{3}+a+3\Big)-\frac{1}{3}\Big(4a^2+12a+9\Big)=9.$$ Conversely, let $p=9d$ with $d\in\mathbb{Z}$ square-free, $q\in\mathbb{Z}$ with $q>2$ and $3\nmid q$ and $4p-27q^2=9$. Let $K=\mathbb{Q}(\theta)$ with $\theta^3-p\theta+pq=0$, and define $a=\frac{9q-3}{2}$. From the equality with $p$ and $q$ we deduce that $q\equiv 1$ or $5$ $(\mathrm{mod}\,6)$, so $a\in\mathbb{Z}$ and $a\equiv3$ or $21$ $(\mathrm{mod}\,27)$. Let us call $\rho=-\theta+\frac{a}{3}$. Going backwards in the chain of equalities above, we find that $\rho$ has irreducible polynomial $x^3-ax^2-(a+3)x-1$ over $\mathbb{Q}$.
\end{proof}

\begin{coro}\label{coro:fundamentalunits} Let $K=\mathbb{Q}(\rho)$ be a simplest cubic field with integral basis $B_3$. Then $\{\rho,\rho'\}$ is a system of fundamental units of $K$.
\end{coro}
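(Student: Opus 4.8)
The plan is to combine the change of variables of the preceding lemma with the explicit arithmetic of the fields $K=\mathbb{Q}(\theta)$, $\theta^3-p\theta+pq=0$, worked out by C\'anovas Orvay in \cite[Theorem 3]{canovasorvay}. By that lemma, our $K$ is of exactly this shape, with $\rho=\tfrac{a}{3}-\theta$, and from Section \ref{sectscf} we moreover have $\rho'=a+2+a\rho-\rho^2\in\mathbb{Z}[\rho]$. Since \cite[Theorem 3]{canovasorvay} provides a fundamental system of units $\{u_1,u_2\}$ of such a field (so that $\mathcal{O}_K^{\times}=\langle-1,u_1,u_2\rangle$, the rank being $2$ by Dirichlet's unit theorem), the substitution $\theta=\tfrac{a}{3}-\rho$ rewrites $u_1,u_2$ as explicit elements of $\mathcal{O}_K$, and the task reduces to checking the equality $\langle-1,\rho,\rho'\rangle=\langle-1,u_1,u_2\rangle$.

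The first step would be to note that $\rho$ and $\rho'$ are units (each has norm $1$) which are multiplicatively independent: for $a>12$ the inequalities recalled in Section \ref{sectscf} give $|\rho|>1$, $|\rho'|>1$, $|\rho''|<1$, and since $\rho\rho'\rho''=1$ one computes, under the logarithmic embedding $u\mapsto(\log|\sigma_i(u)|)_i$, that $\det\left(\begin{smallmatrix}\log|\rho| & \log|\rho'|\\ \log|\rho'| & \log|\rho''|\end{smallmatrix}\right)=-\bigl(\log^2|\rho|+\log|\rho|\log|\rho'|+\log^2|\rho'|\bigr)\neq 0$. Hence $\langle-1,\rho,\rho'\rangle$ has finite index in $\mathcal{O}_K^{\times}$, with regulator $R(\rho,\rho')=\log^2|\rho|+\log|\rho|\log|\rho'|+\log^2|\rho'|$, so that $[\mathcal{O}_K^{\times}:\langle-1,\rho,\rho'\rangle]=R(\rho,\rho')/R_K$, where $R_K$ denotes the regulator of $K$. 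Expressing $\rho$ and $\rho'$ in terms of $u_1,u_2$ then produces a $2\times 2$ integer matrix of exponents whose determinant is, up to sign, precisely this index, and the corollary asserts exactly that this determinant is $\pm 1$; with C\'anovas Orvay's units in hand this is a finite, mechanical verification.

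Because $R(\rho,\rho')$ depends only on the polynomial $f$ and not on whether $\mathcal{O}_K=\mathbb{Z}[\rho]$, the statement also admits a proof independent of \cite{canovasorvay}: for our fields $d_K=\mathfrak{c}^2=\bigl(\tfrac{\Delta}{3}\bigr)^2$ with $\Delta=a^2+3a+9$, so a classical lower bound for the regulator of a totally real cubic field in terms of its discriminant (of Cusick type, $R_K>c\log^2 d_K$ with an explicit $c$), combined with the estimates $a+1<\rho<a+1+\tfrac{2}{a}$ and $\log|\rho'|=O(1/a)$ — which give $R(\rho,\rho')=\bigl(1+o(1)\bigr)\log^2 a$ — yields $R(\rho,\rho')<2R_K$ for every admissible $a$; as $R(\rho,\rho')/R_K$ is then a positive integer smaller than $2$, it must equal $1$. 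The crux, shared by both routes, is that each argument on its own only constrains the index to a small value: making it airtight requires either carrying out the determinant computation explicitly with C\'anovas Orvay's fundamental units, or checking that $R(\rho,\rho')<2R_K$ genuinely holds for the smallest admissible $a$ (where the ratio $R(\rho,\rho')/R_K$ comes closest to $2$), perhaps after replacing Cusick's bound by a sharper one valid for cyclic cubic fields.
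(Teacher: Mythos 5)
Your first route is, in outline, exactly the paper's proof: both pass through the preceding lemma to put $K$ in C\'anovas Orvay's form and then invoke the fundamental system of units from \cite[Theorem 3 (iii)]{canovasorvay}. The problem is that your argument stops precisely where the actual proof has to begin. You reduce the corollary to ``checking the equality $\langle-1,\rho,\rho'\rangle=\langle-1,u_1,u_2\rangle$'' and call the exponent-matrix computation ``a finite, mechanical verification'', but you never perform it -- and you say so yourself in your final sentence. That verification \emph{is} the content of the corollary; everything before it (that $\rho,\rho'$ are multiplicatively independent units, that the index equals $R(\rho,\rho')/R_K$) is routine. The paper carries it out in two short steps: with $\mu=\sigma+\tau$ as in \cite[Theorem 3 (iii)]{canovasorvay} one checks directly, via $\theta=\frac{a}{3}-\rho$, that $\mu=\rho'+1$; since Galois automorphisms preserve fundamental systems, $\{\rho+1,\rho'+1\}$ is then a fundamental system, and the identities $\rho+1=-\rho\rho'$ and $\rho'+1=-\rho^{-1}$ give the exponent matrix $\bigl(\begin{smallmatrix}1&1\\-1&0\end{smallmatrix}\bigr)$ of determinant $1$, whence $\langle-1,\rho,\rho'\rangle=\langle-1,\rho+1,\rho'+1\rangle=\mathcal{O}_K^{\times}$. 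Until you exhibit $u_1,u_2$ explicitly and produce this (or an equivalent) unimodular matrix, what you have is a correct plan, not a proof.

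Your second route -- bounding the index $R(\rho,\rho')/R_K$ below $2$ by a Cusick-type regulator lower bound, using $d_K=(\Delta/3)^2$ and $R(\rho,\rho')=(1+o(1))\log^2 a$ -- is a genuinely different idea, independent of \cite{canovasorvay}, and it is plausibly workable; but it is likewise left conditional, since (as you concede) the inequality $R(\rho,\rho')<2R_K$ is tightest at the smallest admissible parameter $a=21$ and must actually be verified there with explicit constants. Neither branch is completed, so as submitted the corollary remains unproved; the quickest repair is to finish the first branch the way the paper does, via the identification $\mu=\rho'+1$.
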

\begin{proof}
Let $\theta_1=4\frac{p}{3}-3q\theta-2\theta^2$, $\sigma=\frac{\theta_1}{3}$, $\tau=\frac{1}{q}\Big(\frac{q-1}{2}+\frac{q-1}{2}\sigma+\sigma^2\Big)$ and $\mu=\sigma+\tau$. In \cite[Theorem 3 (iii)]{canovasorvay}, it is stated that $\{\mu,\mu'\}$ is a system of fundamental units of $K$. Now, we may check directly that $\mu=\rho'+1$.
Since Galois automorphisms preserve systems of fundamental units, we deduce that $\{\rho+1,\rho'+1\}$ is a system of fundamental units. Finally, since $\rho+1=-\rho\rho'$ and $\rho'+1=-\frac{1}{\rho}$,  $\{\rho,\rho'\}$ is a system of fundamental units of $K$.
\end{proof}

\section{The parallelepipeds of lattice points}

In this section, we turn to the problem of finding the indecomposable integers in a simplest cubic field $K$ for cases when  $[\mathcal{O}_K:\mathbb{Z}[\rho]]=3$. We aim to find an analogue of Proposition \ref{pro:ppidsmonogscf} for this case. We will follow the method at \cite{kalatinkova} combined with a slight variation.%, based on the well known fact that $3\mathcal{O}_K\subseteq\mathbb{Z}[\rho]$.

Let us fix the power basis $\{1,\rho,\rho^2\}$, which is not integral in our case. We have that $\mathcal{O}_K$ embeds in $\frac{1}{3}\mathbb{Z}^3$ through $$\begin{array}{rccl}
    \tau\colon & K & \longrightarrow & \mathbb{R}^3, \\
     & \alpha_1+\alpha_2\rho+\alpha_3\rho^2 & \longmapsto & (\alpha_1,\alpha_2,\alpha_3).
\end{array}$$ Now, let us call $\varepsilon_1=\rho^2$ and $\varepsilon_2=(\rho'')^{-2}$. From \cite[Corollary 2]{thomasvasquez} we know that the pair $(\varepsilon_1,\varepsilon_2)$ is proper (note that conditions (1)-(3) in Definition \ref{def:properpair} do not depend on whether we have $\mathcal{O}_K=\mathbb{Z}[\rho]$). Since $3\mathcal{O}_K\subseteq\mathbb{Z}[\rho]$, the indecomposables of $K$ lie, up to multiplication by totally positive units, at the intersection of $\frac{1}{3}\mathbb{Z}^3$ with the parallelepipeds \begin{equation}\label{eq:parallelepipeds}
    \mathcal{D}(1,\rho^2,1+2\rho+\rho^2),\quad\mathcal{D}(1,\rho^2,-1-a-(a^2+3a+3)\rho+(a+2)\rho^2).
\end{equation}
From now on, these will be referred to as the first parallelepiped and the second parallelepiped, respectively.

\begin{rmk}\normalfont There might be elements in $\frac{1}{3}\mathbb{Z}[\rho]$ that do not lie in $\mathcal{O}_K$. In other words, there might be some lattice points that are not algebraic integers.
\end{rmk}

Then, the problem turns to the determination of the points at the intersection of any of the parallelepipeds with $\frac{1}{3}\mathbb{Z}^3$. For simplicity, these will be also referred to as lattice points.

\subsection{Lattice points in the first parallelepiped}

We shall find the lattice points in the first of the above parallelepipeds.
We look for $t_1,t_2,t_3\in[0,1]$ such that for some $(m,n,o)\in\mathbb{Z}^3$, $$t_1+t_2\rho^2+t_3(1+2\rho+\rho^2)=\frac{1}{3}(m+n\rho+o\rho^2).$$ This gives rise to the system of equations 
\[
t_1+t_3=\frac{1}{3}m, \qquad
2t_3=\frac{1}{3}n, \qquad
t_2+t_3=\frac{1}{3}o.
\]
The second equation gives that $t_3=\frac{n}{6}$ with $0\leq n\leq 6$. Carrying this to the first equation, $t_1=\frac{2m-n}{6}$, with $\frac{n}{2}\leq m\leq\frac{n}{2}+3$. As for the third one, $t_2=\frac{2o-n}{6}$, with $\frac{n}{2}\leq o\leq\frac{n}{2}+3$. We plug these values in the parametric equation, obtaining the points $$\frac{1}{3}(m+n\rho+o\rho^2),\quad0\leq n\leq6,\quad\frac{n}{2}\leq m,o\leq\frac{n}{2}+3,$$ which are candidates to be indecomposables of $K$. From these element, those ones which are non-unit algebraic integers are of the form $t\frac{1+\rho+\rho^2}{3}$ where $1\leq t\leq 5$. We immediately see that if $t\geq 2$, then $t\frac{1+\rho+\rho^2}{3}$ is decomposable (which is also true for $1+\rho+\rho^2$, which is indecomposable in $\Z[\rho]$). In the case of $g_3=\frac{1+\rho+\rho^2}{3}$, we will show that this element is indeed indecomposable in $\O_K$ (see Proposition \ref{proindecs12}).  

\subsection{Lattice points in the second parallelepiped}

For the second parallelepiped, we use the integral basis of a simplest cubic field in this family, which is $$B_3=\Big\{1,\rho,\frac{1+\rho+\rho^2}{3}\Big\}.$$ We just rewrite the nodes of the second parallelepiped with respect to this integral basis. Call $g_1=1$, $g_2=\rho$, $g_3=\frac{1+\rho+\rho^2}{3}$. Then, the second parallelepiped becomes $$\mathcal{D}(g_1,-g_1-g_2+3g_3,-(2a+3)g_1-(a^2+4a+5)g_2+3(a+2)g_3),$$ and we want to determine its lattice points. That is, we look for $t_1,t_2,t_3\in[0,1]$ and integer numbers $m,n,o\in\mathbb{Z}$ such that $$t_1g_1+t_2(-g_1-g_2+3g_3)+t_3(-(2a+3)g_1-(a^2+4a+5)g_2+3(a+2)g_3)=mg_1+ng_2+og_3.$$ This is equivalent to the following system of equations
    \begin{align}
        t_1-t_2-(2a+3)t_3&=m, \notag \\
        -t_2-(a^2+4a+5)t_3&=n, \label{systemt_i} \\
        3t_2+3(a+2)t_3&=o. \notag
    \end{align}

From the second and the third equation we obtain $-3(a^2+3a+3)t_3=o+3n$. As in the monogenic case, the choices $t_3=0$ or $t_3=3(a^2+3a+3)$ give either totally positive units or their sums. Therefore, \begin{equation}\label{eq:t3val}
    t_3=\frac{u}{3(a^2+3a+3)},\quad1\leq u\leq3(a^2+3a+3)-1.
\end{equation}

From the first and the second equation we obtain that $t_1+(a^2+2a+2)t_3=m-n$, that is, $$t_1=m-n-u\Big(\frac{1}{3}-\frac{a+1}{3(a^2+3a+3)}\Big).$$ Write $u=3w+s$, where $0\leq w\leq a^2+3a+2$, $0\leq s\leq2$ and $s>0$ if $w=0$. Then $$t_1=m-n-w+\frac{w(a+1)}{a^2+3a+3}-s\Big(\frac{1}{3}-\frac{a+1}{3(a^2+3a+3)}\Big).$$ Write $w=v(a+2)+r$, where $0\leq v,r\leq a+1$ and $r=0$ if $v=a+1$. Then $$t_1=m-n-w+v+\frac{r(a+1)-v}{a^2+3a+3}-s\frac{a^2+2a+2}{3(a^2+3a+3)}.$$ Let us define $$e_1=\Big\lceil-\Big(\frac{r(a+1)-v}{a^2+3a+3}-s\frac{a^2+2a+2}{3(a^2+3a+3)}\Big)\Big\rceil.$$ Hence, $t_1\in[0,1]$ if and only if $m-n=w-v+e_1$. We can easily write $$e_1=\Big\lceil\frac{s(a^2+2a+2)-3(r(a+1)-v)}{3(a^2+3a+3)}\Big\rceil.$$ Then, we have that
\begin{itemize}
    \item For $s=0$, $e_1=1$ if $r=0$, and $e_1=0$ otherwise.
    \item For $s=1$, $e_1=1$ if $3(r(a+1)-v)\leq a^2+2a$, and $e_1=0$ otherwise.
    \item For $s=2$, $e_1=1$ if $3(r(a+1)-v)\leq2a^2+4a+3=2a(a+2)+3$, and $e_1=0$ otherwise.
\end{itemize}
\begin{rmk}\normalfont In the case $s=0$, $e_1=0$ would be also possible if $v=0$ and $r=0$, but this corresponds to the zero point, which is not totally positive.
\end{rmk}

On the other hand, replacing the expression \eqref{eq:t3val} of $t_3$ in the second equation of \eqref{systemt_i} gives $$t_2=-n-u\Big(\frac{1}{3}+\frac{a+2}{3(a^2+3a+3)}\Big).$$ Now, replacing $u=3w+s$, $$t_2=-n-w-\frac{w(a+2)}{a^2+3a+3}-s\Big(\frac{1}{3}+\frac{a+2}{3(a^2+3a+3)}\Big).$$ Write $w=t(a+1)+l$, with $0\leq t\leq a+2$, $0\leq l\leq a$ and $l=0$ if $t=a+2$. Now, $$t_2=-n-w-t+\frac{t-l(a+2)}{a^2+3a+3}-s\frac{a^2+4a+5}{3(a^2+3a+3)}.$$  Let us call $$e_2=\Big\lceil-\Big(\frac{t-l(a+2)}{a^2+3a+3}-s\frac{a^2+4a+5}{3(a^2+3a+3)}\Big)\Big\rceil.$$ Then $t_2\in[0,1]$ if and only if $n=-w-t-e_2$. An easy calculation shows that $$e_2=\Big\lceil\frac{s(a^2+4a+5)+3(l(a+2)-t)}{3(a^2+3a+3)}\Big\rceil.$$ Thus, in this case:
\begin{itemize}
    \item For $s=0$, $e_2=0$ if $l=0$, and $e_2=1$ otherwise.
    \item For $s=1$, $e_2=1$ if $3(l(a+2)-t)\leq2a^2+5a+3=(2a+3)(a+1)$, and $e_2=2$ otherwise.
    \item For $s=2$, $e_2=1$ if $3(l(a+2)-t)\leq a^2+a-3=a(a+1)-3$, and $e_2=2$ otherwise.
\end{itemize} 
\begin{rmk}\normalfont In the case $s=0$, $e_2=0$ would be also possible if $l=1$ and $t=a+2$, but we have excluded that combination by taking $l=0$ if $t=a+2$.
\end{rmk}
Moreover, since $-u=o+3n$, we have that $o=-u+3(w+t+e_2)$.

This gives the lattice points $$-(v+t-e_1+e_2)g_1-(w+t+e_2)g_2+(-u+3(w+t+e_2))g_3.$$ Since $u=3w+s$, we may rewrite them as $$-(v+t-e_1+e_2)g_1-(w+t+e_2)g_2+(-s+3(t+e_2))g_3.$$

With respect to the power (non-integral) basis, these become \begin{equation*}
    -v-\frac{s}{3}+e_1-\Big(w+\frac{s}{3}\Big)\rho+\Big(t-\frac{s}{3}+e_2\Big)\rho^2.
\end{equation*}

\begin{pro}\label{procandidates3} Let $K$ be a simplest cubic field with $a\equiv3$ or $21\,(\mathrm{mod}\,27)$, $a>12$ and $\frac{\Delta}{27}$ square-free. The lattice points of $K$ (and therefore non-unit candidates to be indecomposables of $K$; up to multiplication by totally positive units) are the point $\frac{1+\rho+\rho^2}{3}$ from the first parallelepiped, and the points $$-(v+t-e_1+e_2)g_1-(w+t+e_2)g_2+(-s+3(t+e_2))g_3$$ $$=-(v-e_1)-w\rho+(t+e_2)\rho^2-sg_3$$ from the second one, where:  
\begin{itemize}
    \item $0\leq w\leq a^2+3a+2$,
    \item $0\leq s\leq2$, and $s>0$ if $w=0$,
    \item $v$ and $r$ are defined from $w=v(a+2)+r$, $0\leq v,r\leq a+1$ and $r=0$ if $v=a+1$,
    \item $t$ and $l$ are defined from $w=t(a+1)+l$, $0\leq t\leq a+2$, $0\leq l\leq a$, and $l=0$ if $t=a+2$,
    \item $e_1=\Big\lceil\frac{s(a^2+2a+2)+3(v-r(a+1))}{3(a^2+3a+3)}\Big\rceil$,
    \item $e_2=\Big\lceil\frac{s(a^2+4a+5)+3(l(a+2)-t)}{3(a^2+3a+3)}\Big\rceil$.
\end{itemize}
\end{pro}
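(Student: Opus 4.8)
The plan is to go through the two parallelepipeds of \eqref{eq:parallelepipeds}, which contain all indecomposables up to multiplication by totally positive units, working throughout in the integral basis $B_3(1,1)=\{g_1,g_2,g_3\}$. For the first parallelepiped I would specialise Proposition~\ref{firstparalgen} to $p=3$, $(k,l)=(1,1)$: the congruences $m\equiv ko\equiv o$ and $n\equiv lo\equiv o\pmod 3$ collapse to $m\equiv n\equiv o\pmod 3$, and since $m,n,o$ are forced into the length-$3$ interval $[\tfrac n2,\tfrac n2+3]$, they must all equal $n$. Thus the integral lattice points here are exactly $ng_3$ for $1\leq n\leq 5$; as $1+x+x^2=(x+\tfrac12)^2+\tfrac34>0$ for all real $x$, the element $g_3$ is totally positive, so $ng_3=g_3+(n-1)g_3$ is a sum of totally positive integers when $n\geq 2$, leaving $g_3=\frac{1+\rho+\rho^2}{3}$ as the only candidate from this parallelepiped.

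For the second parallelepiped I would first rewrite its nodes in $\{g_1,g_2,g_3\}$ via $\rho^2=3g_3-g_1-g_2$, obtaining $\mathcal D\big(g_1,\,-g_1-g_2+3g_3,\,-(2a+3)g_1-(a^2+4a+5)g_2+3(a+2)g_3\big)$, and write a prospective lattice point as $t_1(\text{node}_1)+t_2(\text{node}_2)+t_3(\text{node}_3)$ with $t_i\in[0,1]$ and coordinate vector $(m,n,o)\in\mathbb Z^3$; this is system~\eqref{systemt_i}. Eliminating $t_1,t_2$ gives $3(a^2+3a+3)\,t_3=-(o+3n)\in\mathbb Z$, whose boundary values $t_3\in\{0,1\}$ cut out faces containing only totally positive units (among them $1$ and $\rho^2=\varepsilon_1$) and sums of such, exactly as in the monogenic case of \cite{kalatinkova}; discarding these leaves $t_3=\frac{u}{3(a^2+3a+3)}$ with $1\leq u\leq 3(a^2+3a+3)-1$. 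Now $t_1$ and $t_2$ are each determined modulo $\mathbb Z$ by $u$ alone, so imposing $t_1,t_2\in[0,1]$ fixes the integers $m-n$ and $n$, hence also $o=-u-3n$ and $m$.

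It remains to make the residues explicit through the nested substitutions displayed before the statement: $u=3w+s$ with $0\leq w\leq a^2+3a+2$, $0\leq s\leq 2$, and $s\geq 1$ when $w=0$; then $w=v(a+2)+r$ ($0\leq v,r\leq a+1$, $r=0$ if $v=a+1$) inside the formula for $t_1$, and $w=t(a+1)+l$ ($0\leq t\leq a+2$, $0\leq l\leq a$, $l=0$ if $t=a+2$) inside the formula for $t_2$. With $e_1,e_2$ defined as the stated ceilings, one checks directly that $t_1\in[0,1]\iff m-n=w-v+e_1$ and $t_2\in[0,1]\iff n=-(w+t+e_2)$, so the lattice point is $-(v+t-e_1+e_2)g_1-(w+t+e_2)g_2+(-s+3(t+e_2))g_3$; converting back with $g_1=1$, $g_2=\rho$, $g_3=\frac{1+\rho+\rho^2}{3}$ gives the $\rho$-coordinate form $-(v-e_1)-w\rho+(t+e_2)\rho^2-sg_3$. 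The split of $e_1$ (and of $e_2$) according to $s\in\{0,1,2\}$ is the elementary case-check preceding the statement.

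The difficulty here is purely one of bookkeeping: the two incompatible Euclidean divisions of $w$ — by $a+2$ for the $t_1$-analysis and by $a+1$ for the $t_2$-analysis — must be carried along in parallel, the ceiling values must be verified in each of the three sub-cases $s=0,1,2$, and the endpoint conventions ($r=0$ when $v=a+1$, $l=0$ when $t=a+2$, $s\geq 1$ when $w=0$) must be matched exactly against $1\leq u\leq 3(a^2+3a+3)-1$ so that every admissible $u$ appears once. Since the statement only claims this set \emph{contains} all indecomposables up to totally positive units, I would spend no effort excluding the harmless repetitions that may arise when some $t_i$ equals $0$ or $1$.
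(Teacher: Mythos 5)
Your proposal is correct and follows essentially the same route as the paper: specialising Proposition~\ref{firstparalgen} to $p=3$, $(k,l)=(1,1)$ to isolate $g_3$ in the first parallelepiped, and for the second one solving the system \eqref{systemt_i} in the basis $\{g_1,g_2,g_3\}$, discarding the faces $t_3\in\{0,1\}$, and introducing $u=3w+s$ together with the two Euclidean divisions of $w$ and the ceilings $e_1,e_2$ exactly as in Section~\ref{firstsubfamily}. The only work left implicit in both accounts is the elementary verification of the ceiling formulas and endpoint conventions, which the paper likewise relegates to the case analysis preceding the statement.
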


Note that each lattice point $\alpha$ in the second parallelepiped is completely determined from $s$ and the integers $v$ and $r$ as in Proposition \ref{procandidates3} such that $\alpha=-(v-e_1)-(v(a+2)+r)\rho+(t+e_2)\rho^2-sg_3$, because such a choice determines completely the pair $(t,l)$. From now on, we will denote $\alpha=\alpha_s(v,r)$ or, for $w=v(a+2)+r$, $\alpha=\alpha_s(w)$.

\section{Indecomposables when $[\O_K:\Z[\rho]]=3$}\label{sec:indec}

In this part we will prove Theorem \ref{teolistindec}, i.e, we will find all the indecomposables of a simplest cubic field with integral basis $B_3$ up to multiplication by totally positive units. In the previous sections, we found all the lattice points in the two parallelepipeds, and now we determine which ones are indecomposable. 

To complete this task, we will proceed as follows:
\begin{enumerate}
    \item First, we specify algebraic integers belonging to the second parallelepiped (Subsections \ref{sects=0}, \ref{sects=1} and \ref{sects=2}). \label{pt:1}
    \item Then we prove that elements in Theorem \ref{teolistindec} are indecomposable (Subsection \ref{subsubsec:indecomposable}). 
    \item For the elements in (\ref{pt:1}) not included in Theorem \ref{teolistindec}, we 
    \begin{enumerate}
        \item first exclude some of those elements which are just unit multiples of conjugates of other ones (Subsection \ref{sect:t1t2}) \label{pt:11} 
        \item and then find decompositions as a sum of two totally positive algebraic integers for elements not excluded in (\ref{pt:11}) (Subsection \ref{sec:decspoints}). 
    \end{enumerate}
\end{enumerate}

First of all, let us identify the lattice points for the case $w=0$. In this case, $s=0$ would give the zero point, which is excluded. Otherwise, since we have $v=t=r=l=0$, then $e_1=e_2=1$. We obtain the points \begin{equation}\label{lowpoints}
    \alpha_2(0,0)=-g_2+g_3,\quad\alpha_1(0,0)=-g_2+2g_3.
\end{equation}

The case $w=(a+1)(a+2)$ is easy as well. This value of $w$ corresponds to $t=a+2$, $v=a+1$ and $r=l=0$. For $s=0$, we see that $e_1=1$ and $e_2=0$. We obtain the point $$\alpha_0(a+1,0)=-(2a+2)-(a+2)^2g_2+3(a+2)g_3=-a-(a+1)(a+2)\rho+(a+2)\rho^2.$$ For $s\neq0$, we have $e_1=e_2=1$, giving the point \begin{equation}\label{highpoints}
    \begin{split}
        \alpha_s(a+1,0)&=-(2a+3)-((a+2)^2+1)g_2+(-s+3(a+3))g_3\\&=-a-(a+1)(a+2)\rho+(a+3)\rho^2-sg_3.
    \end{split}
\end{equation}

Fixed a value of $s$, we will distribute the lattice points $\alpha_s(v,r)$ in four situations depending on the value of $w=v(a+2)+r$.

\begin{itemize}
    \item[(a)] $x(a+2)<w<(x+1)(a+1)$, $0\leq x\leq a-1$, which corresponds to $1\leq r\leq a-v$. In this case, $v=t=x$ and $l=r+v$. 
    \item[(b)] $w=(x+1)(a+1)$, $0\leq x\leq a$. In this case, $v=x$, $t=v+1$, $r=a-v+1$ and $l=0$.
    \item[(c)] $(x+1)(a+1)<w<(x+1)(a+2)$, $1\leq x\leq a$, which corresponds to $v=x$, $a-v+2\leq r\leq a+1$, $t=v+1$ and $l=r-(a-v+1)$.
    \item[(d)] $w=(x+1)(a+2)$, $0\leq x\leq a-1$. Then $v=x+1$, $r=0$ and $t=l=v$.
\end{itemize}

It may be checked easily that these cases, together with $\alpha_s(0,0)$, $s\in\{1,2\}$ and $\alpha_s(a+1,0)$, $s\in\{0,1,2\}$, cover exactly once all points in Proposition \ref{procandidates3}.

\subsection{The case $s=0$}\label{sects=0}

If in Proposition \ref{procandidates3} we take $s=0$, this corresponds to the intersection of the second parallelepiped with $\mathbb{Z}[\rho]$. Then, we are left just with the lattice points in $\mathbb{Z}[\rho]$, which have been already determined in \cite{kalatinkova}. In other words, the points $\alpha_s(v,r)$ with $s=0$ are exactly the ones that have integer coordinates with respect to the power basis $\{1,\rho,\rho^2\}$. With our notation, these are $$\begin{array}{ll}
    -v-w\rho+(v+1)\rho^2, & \hbox{if }v(a+2)<w\leq (v+1)(a+1)\hbox{ and }0\leq v\leq a, \\
    -v-w\rho+(v+2)\rho^2, & \hbox{if }(v+1)(a+1)<w<(v+1)(a+2)\hbox{ and }0\leq v\leq a, \\
    -(v-1)-w\rho+(v+1)\rho^2, & \hbox{if }w=v(a+2)\hbox{ and }0\leq v\leq a+1. \\
\end{array}$$ While the first line corresponds to situations (a) and (b), the second (resp. third) one corresponds to situation (c) (resp. (d)).

\subsection{The case $s=1$} \label{sects=1}

Assume that $s=1$. Recall that $w=v(a+2)+r=t(a+1)+l$. Now, we have that $$e_1=\begin{cases}
1 & \hbox{if }3(r(a+1)-v)\leq a(a+2), \\
0 & \hbox{otherwise},
\end{cases}$$ $$e_2=\begin{cases}
1 & \hbox{if }3(l(a+2)-t)\leq (2a+3)(a+1), \\
0 & \hbox{otherwise}.
\end{cases}$$

\subsubsection*{Situation (a)}

We first take $x(a+2)<w<(x+1)(a+1)$, $0\leq x\leq a-1$. We know that $x=v$. Then: \begin{equation*}
    \begin{split}
        &3(r(a+1)-v)\leq a(a+2) \\
        \Longleftrightarrow\;&r(a+1)\leq\frac{a}{3}(a+2)+v \\
        \Longleftrightarrow\;&r\leq\Big\lfloor\frac{a(a+2)}{3(a+1)}+\frac{v}{a+1}\Big\rfloor=\Big\lfloor\frac{a}{3}\Big(1+\frac{1}{a+1}\Big)+\frac{v}{a+1}\Big\rfloor=\frac{a}{3}+\Big\lfloor\frac{3v+a}{3(a+1)}\Big\rfloor.
    \end{split}
\end{equation*} Now, $3v+a<3(a+1)$ if and only if $v<\frac{2a}{3}+1$. We deduce that $$e_1=\begin{cases}
    1 & \hbox{if }0\leq v\leq\frac{2a}{3}\hbox{ and }1\leq r\leq\frac{a}{3} \\
     & \hbox{or }\frac{2a}{3}+1\leq v\leq a-1, \\
    0 &\hbox{if }0\leq v\leq\frac{2a}{3}-1\hbox{ and }\frac{a}{3}+1\leq r\leq a-v.
\end{cases}$$ On the other hand, \begin{equation*}
    \begin{split}
        &3(l(a+2)-t)=3(v(a+1)+r(a+2))\leq (2a+3)(a+1) \\
        \Longleftrightarrow\;&r(a+2)\leq\Big(\frac{2a}{3}+1-v\Big)(a+1) \\
        \Longleftrightarrow\;&r\leq\Big\lfloor\Big(\frac{2a}{3}+1-v\Big)\frac{a+1}{a+2}\Big\rfloor=\frac{2a}{3}+1-v+\Big\lfloor\frac{1}{a+2}\Big(v-\Big(\frac{2a}{3}+1\Big)\Big)\Big\rfloor.
    \end{split}
\end{equation*} Note that the right-side member needs to be positive, i.e. $v<\frac{2a}{3}+1$. Under this assumption, we have that $e_2=1$ if and only if $1\leq r\leq\frac{2a}{3}-v$ (since $\frac{2a}{3}+1-v<a+2$). We deduce that $$\begin{cases}
    e_2=1 & \hbox{if }0\leq v\leq\frac{2a}{3}\hbox{ and }1\leq r\leq\frac{2a}{3}-v, \\
    e_2=2 & \hbox{if }0\leq v\leq\frac{2a}{3}\hbox{ and }\frac{2a}{3}+1-v\leq r\leq a-v \\
     & \hbox{or }\frac{2a}{3}+1\leq v\leq a. \\
\end{cases}$$ Joining all the information together, we have that:
\begin{itemize}
    \item $e_1=e_2=1$ exactly in the cases:
    \begin{itemize}
        \item $0\leq v\leq\frac{a}{3}\hbox{ and }1\leq r\leq\frac{a}{3}$,
        \item $\frac{a}{3}+1\leq v\leq\frac{2a}{3}-1\hbox{ and }1\leq r\leq\frac{2a}{3}-v$.
    \end{itemize} These correspond to the points $$-2vg_1-(w+v+1)g_2+(3v+2)g_3=-(v-1)-w\rho+(v+1)\rho^2-g_3.$$
    \item $e_1=0$ and $e_2=1$ if and only if $0\leq v\leq\frac{a}{3}-1$ and $\frac{a}{3}+1\leq r\leq\frac{2a}{3}-v$. These correspond to the points $$-(2v+1)g_1-(w+v+1)+(3v+2)g_3=-v-w\rho+(v+1)\rho^2-g_3.$$
    \item $e_1=1$ and $e_2=2$ exactly in the cases:
    \begin{itemize}
        \item $\frac{a}{3}+1\leq v\leq\frac{2a}{3}$ and $\frac{2a}{3}-v+1\leq r\leq\frac{a}{3}$,
        \item $\frac{2a}{3}+1\leq v\leq a-1$.
    \end{itemize} These correspond to the points $$-(2v+1)g_1-(w+v+2)g_2+(3v+5)g_3=-(v-1)-w\rho+(v+2)\rho^2-g_3.$$
    \item $e_1=0$ and $e_2=2$ exactly in the cases:
    \begin{itemize}
        \item $0\leq v\leq\frac{a}{3}$ and $\frac{2a}{3}-v+1\leq r\leq a-v$,
        \item $\frac{a}{3}+1\leq v\leq\frac{2a}{3}-1$ and $\frac{a}{3}+1\leq r\leq a-v$.
    \end{itemize} These correspond to the points $$-(2v+2)g_1-(w+v+2)g_2+(3v+5)g_3=-v-w\rho+(v+2)\rho^2-g_3.$$
\end{itemize}

\subsubsection*{Situation (b)}

Suppose that $w=(x+1)(a+1)$ with $0\leq x\leq a$ and $x=v$. Now, we have \begin{equation*}
    \begin{split}
        &3(r(a+1)-v)=3((a-v)(a+2)+1)\leq a(a+2) \\
        \Longleftrightarrow\; & (a-v)(a+2)+1\leq\frac{a}{3}(a+2) \\
        \Longleftrightarrow\; & a-v+\frac{1}{a+2}\leq\frac{a}{3} 
        \\
        \Longleftrightarrow\; & a-v+1\leq\frac{a}{3} \\
        \Longleftrightarrow\; & v\geq\frac{2a}{3}+1.
    \end{split}
\end{equation*} Therefore, $e_1=1$ if $v\geq\frac{2a}{3}+1$, and $e_1=0$ otherwise. On the other hand, since $l=0$ we always have that $e_2=1$. Then:
\begin{itemize}
    \item $e_1=1$ and $e_2=1$ if and only if $\frac{2a}{3}+1\leq v\leq a$. This corresponds to the points $$-(2v+1)g_1-(w+v+2)g_2+(3v+5)g_3=-(v-1)-w\rho+(v+2)\rho^2-g_3.$$
    \item $e_1=0$ and $e_2=1$ if and only if $0\leq v\leq\frac{2a}{3}$. This corresponds to the points $$-(2v+2)g_1-(w+v+2)g_2+(3v+5)g_3=-v-w\rho+(v+2)\rho^2-g_3.$$
\end{itemize}

\subsubsection*{Situation (c)}

Next, assume that $(x+1)(a+1)+1\leq w<(x+1)(a+2)$ with $1\leq x\leq a$ and $x=v$. Arguing as in the situation (a), we see that $3(r(a+1)-v)\leq a(a+2)$ if and only if $r\leq\frac{a}{3}+\Big\lfloor\frac{3v+a}{3(a+1)}\Big\rfloor$. Since $\frac{a}{3}+1\geq a-v+2$ if and only if $v\geq\frac{2a}{3}+1$, we deduce that $$e_1=\begin{cases}
    1 & \hbox{if }\frac{2a}{3}+1\leq v\leq a\hbox{ and }a-v+2\leq r\leq\frac{a}{3}+1, \\
    0 & \hbox{if }\frac{2a}{3}+1\leq v\leq a\hbox{ and }\frac{a}{3}+2\leq r\leq a+1 \\
     & \hbox{or }1\leq v\leq\frac{2a}{3}. \\
\end{cases}$$ On the other hand, \begin{equation*}
    \begin{split}
        & 3(l(a+2)-t)=3(l(a+2)-(v+1))\leq (2a+3)(a+1) \\
        \Longleftrightarrow\; & l(a+2)\leq\Big(\frac{2a}{3}+1\Big)(a+1)+v+1 \\
        \Longleftrightarrow\; & l\leq\Big(\frac{2a}{3}+1\Big)\Big(1-\frac{1}{a+2}\Big)+\frac{v+1}{a+2}=\frac{2a}{3}+1+\frac{v-\frac{2a}{3}}{a+2} \\
        \Longleftrightarrow\; & r\leq\frac{5a}{3}-v+2+\frac{v-\frac{2a}{3}}{a+2},
    \end{split} 
\end{equation*} as $l=r-(a-v+1)$. Now, we see that $$\Big\lfloor\frac{v-\frac{2a}{3}}{a+2}\Big\rfloor=\begin{cases}
    -1 & \hbox{if }v<\frac{2a}{3}, \\
    0 & \hbox{if }v\geq\frac{2a}{3}.
\end{cases}$$ In the first case, we have that $e_2=1$ when $a-v+2\leq r\leq\mathrm{min}(a+1,\frac{5a}{3}-v+1)=a+1$, so there are no further restrictions on $r$. As for the second one, we have that $e_2=1$ when $a-v+2\leq r\leq\mathrm{min}(a+1,\frac{5a}{3}-v+2)$. This equals $\frac{5a}{3}-v+2$ unless $v=\frac{2a}{3}$, in which case the minimum is $a+1$. We conclude that $$e_2=\begin{cases}
    1 & \hbox{if }1\leq v\leq\frac{2a}{3}, \\
      & \hbox{or }\frac{2a}{3}+1\leq v\leq a\hbox{ and }a-v+2\leq r\leq\frac{5a}{3}-v+2, \\
    2 &\hbox{if }\frac{2a}{3}+1\leq v\leq a\hbox{ and }\frac{5a}{3}-v+3\leq r\leq a+1.
\end{cases}$$ We note that $e_1=1$ implies that $e_2=1$, and consequently the case that $e_1=1$ and $e_2=2$ is not possible.

\begin{itemize}
    \item $e_1=e_2=1$ if and only if $\frac{2a}{3}+1\leq v\leq a\hbox{ and }a-v+2\leq r\leq\frac{a}{3}+1$. This corresponds to the points $$-(2v+1)g_1-(w+v+2)g_2+(3v+5)g_3=-(v-1)-w\rho+(v+2)\rho^2-g_3.$$
    \item $e_1=0$ and $e_2=1$ exactly in the cases:
    \begin{itemize}
        \item $1\leq v\leq\frac{2a}{3}$.
        \item $\frac{2a}{3}+1\leq v\leq a$ and $\frac{a}{3}+2\leq r\leq\frac{5a}{3}-v+2$.
    \end{itemize}
    This corresponds to the points $$-(2v+2)g_1-(w+v+2)g_2+(3v+5)g_3=-v-w\rho+(v+2)\rho^2-g_3.$$
    \item $e_1=0$ and $e_2=2$ if and only if $\frac{2a}{3}+2\leq v\leq a$ and $\frac{5a}{3}-v+3\leq r\leq a+1$. 
    This corresponds to the points $$-(2v+3)g_1-(w+v+3)g_2+(3v+8)g_3=-v-w\rho+(v+3)\rho^2-g_3.$$
\end{itemize}

\subsubsection*{Situation (d)}

Let us take $w=(x+1)(a+2)$ with $0\leq x\leq a-1$. Now, we have $v=x+1$. Since $r=0$, we always have $3(r(a+1)-v)<a(a+2)$, so $e_1=1$. Moreover, $3(l(a+2)-t)=3v(a+1)\leq(2a+3)(a+1)$ if and only if $v\leq\frac{2a}{3}+1$. Therefore, $e_2=1$ if $v\leq\frac{2a}{3}+1$ and $e_2=2$ otherwise. Then:
\begin{itemize}
    \item $e_1=1$ and $e_2=1$ if and only if $1\leq v\leq\frac{2a}{3}+1$. Together with the point $-g_2+2g_3$ from \eqref{lowpoints} (so that $0\leq v\leq\frac{2a}{3}+1$), this corresponds to the points $$-2vg_1-(w+v+1)g_2+(3v+2)g_3=-(v-1)-w\rho+(v+1)\rho^2-g_3.$$
    \item $e_1=1$ and $e_2=2$ if and only if $\frac{2a}{3}+2\leq v\leq a$. Together with the point from \eqref{highpoints} with $s=1$ (so that $\frac{2a}{3}+2\leq v\leq a+1$), this corresponds to the points $$-(2v+1)g_1-(w+v+2)g_2+(3v+5)g_3=-(v-1)-w\rho+(v+2)\rho^2-g_3.$$
\end{itemize}

\subsection{The case $s=2$} \label{sects=2}

Assume that $s=2$. In this case, $$e_1=\begin{cases}
1 & \hbox{if }3(r(a+1)-v)\leq 2a(a+2)+3, \\
0 & \hbox{otherwise},
\end{cases}$$ $$e_2=\begin{cases}
1 & \hbox{if }3(l(a+2)-t)\leq a(a+1)-3, \\
2 & \hbox{otherwise}.
\end{cases}$$

\subsubsection*{Situation (a)}

We first take $x(a+2)<w<(x+1)(a+1)$, where $x=v$. Then: \begin{equation*}
    \begin{split}
        &3(r(a+1)-v)\leq2a(a+2)+3\\
        \Longleftrightarrow\;&r(a+1)\leq\frac{2a}{3}(a+2)+v+1\\
        \Longleftrightarrow\;&r\leq\frac{2a}{3}\Big(1+\frac{1}{a+1}\Big)+\frac{v+1}{a+1}=\frac{2a}{3}+\frac{v+1+\frac{2a}{3}}{a+1}.
    \end{split}
\end{equation*} We deduce that $$e_1=\begin{cases}
    1 &\hbox{if }0\leq v\leq\frac{a}{3}-1\hbox{ and }1\leq r\leq\frac{2a}{3}\\
    &\hbox{or }\frac{a}{3}\leq v\leq a-1,\\
    0 &\hbox{if }0\leq v\leq\frac{a}{3}-1\hbox{ and }\frac{2a}{3}+1\leq r\leq a-v.
\end{cases}$$ On the other hand, \begin{equation*}
    \begin{split}
        &3(l(a+2)-t)=3(v(a+1)+r(a+2))\leq a(a+1)-3\\
        \Longleftrightarrow\;&r(a+2)\leq\frac{a}{3}(a+1)-1-v(a+1)=\Big(\frac{a}{3}-v\Big)(a+1)-1\\
        \Longleftrightarrow;&r\leq\Big(\frac{a}{3}-v\Big)\Big(1-\frac{1}{a+2}\Big)-\frac{1}{a+2}=\frac{a}{3}-v+\frac{v-\frac{a}{3}-1}{a+2}.
    \end{split}
\end{equation*} Then, $$e_2=\begin{cases}
    1 &\hbox{if }0\leq v\leq\frac{a}{3}-2\hbox{ and }1\leq r\leq\frac{a}{3}-v-1,\\
    2 &\hbox{if }0\leq v\leq\frac{a}{3}-2\hbox{ and }\frac{a}{3}-v\leq r\leq a-v\\
    &\hbox{or }\frac{a}{3}-1\leq v\leq a-1.
\end{cases}$$ 

\begin{itemize}
    \item $e_1=e_2=1$ if and only if $0\leq v\leq\frac{a}{3}-2$ and $1\leq r\leq\frac{a}{3}-v-1$. This corresponds to the points $$-2vg_1-(w+v+1)g_2+(3v+1)g_3=-(v-1)-w\rho+(v+1)\rho^2-2g_3.$$
    \item Since $e_2=1$ implies that $e_1=1$, the case that $e_1=0$ and $e_2=1$ is not possible.
    \item $e_1=1$ and $e_2=2$ exactly in the cases:
    \begin{itemize}
        \item $0\leq v\leq\frac{a}{3}-1$ and $\frac{a}{3}-v\leq r\leq\frac{2a}{3}$.
        \item $\frac{a}{3}\leq v\leq a-1$.
    \end{itemize} These correspond to the points $$-(2v+1)g_1-(w+v+2)g_2+(3v+4)g_3=-(v-1)-w\rho+(v+2)\rho^2-2g_3.$$
    \item $e_1=0$ and $e_2=2$ if and only if $0\leq v\leq\frac{a}{3}-1$ and $\frac{2a}{3}+1\leq r\leq a-v$. In this case, we have the points $$-(2v+2)g_1-(w+v+2)g_2+(3v+4)g_3=-v-w\rho+(v+2)\rho^2-2g_3.$$
\end{itemize}

\subsubsection*{Situation (b)}

Suppose that $w=(x+1)(a+1)$ with $0\leq x\leq a$ and $x=v$. Now, we have \begin{equation*}
    \begin{split}
        &3(r(a+1)-v)=3((a-v)(a+2)+1)\leq2a(a+2)+3\\
        \Longleftrightarrow\;&a(a+2)-v(a+2)+1\leq\frac{2a}{3}(a+2)+1\\
        \Longleftrightarrow\;&v(a+2)\geq\frac{a}{3}(a+2)\\
        \Longleftrightarrow\;&v\geq\frac{a}{3}.
    \end{split}
\end{equation*} We deduce that $e_1=1$ if $v\geq\frac{a}{3}$ and $e_1=0$ otherwise. On the other hand, we always have $e_2=1$ because $l=0$.

\begin{itemize}
    \item $e_1=e_2=1$ if and only if $\frac{a}{3}\leq v\leq a$, giving the points $$-(2v+1)g_1-(w+v+2)g_2+(3v+4)g_3=-(v-1)-w\rho+(v+2)\rho^2-2g_3.$$
    \item $e_1=0$ and $e_2=1$ if and only if $0\leq v\leq\frac{a}{3}-1$, which corresponds to $$-(2v+2)g_1-(w+v+2)g_2+(3v+4)g_3=-v-w\rho+(v+2)\rho^2-2g_3.$$
\end{itemize}

\subsubsection*{Situation (c)}

Next, assume that $(x+1)(a+1)+1\leq w<(x+1)(a+2)$ with $1\leq x\leq a$ and $x=v$. Arguing as in the situation (a), $3(r(a+1)-v)\leq 2a(a+2)+3$ if and only if $r\leq\frac{2a}{3}+\frac{v+1+\frac{2a}{3}}{a+1}$. Assume that $0\leq v\leq\frac{a}{3}-1$. If it was $e_1=1$, we would have that $r\leq\frac{2a}{3}$ but $a-v+2>\frac{2a}{3}$, which is a contradiction. For $v=\frac{a}{3}$, if it was $e_1=1$ we would have that $r\leq\frac{2a}{3}+1$, but $a-v+2=\frac{2a}{3}+2$, giving again a contradiction. Thus, when $v\leq\frac{a}{3}$ we always have $e_1=0$. Now, suppose that $v\geq\frac{a}{3}+1$. Again, $e_1=1$ if and only if $r\leq\frac{2a}{3}+1$, and in this case $a-v+2\geq\frac{2a}{3}+1$. Thus, $$e_1=\begin{cases}
    1 &\hbox{if }\frac{a}{3}+1\leq v\leq a\hbox{ and }a-v+2\leq r\leq\frac{2a}{3}+1,\\
    0 &\hbox{if }\frac{a}{3}+1\leq v\leq a\hbox{ and }\frac{2a}{3}+2\leq r\leq a+1\\
    &\hbox{or }1\leq v\leq\frac{a}{3}.
\end{cases}$$ On the other hand,
\begin{equation*}
    \begin{split}
        & 3(l(a+2)-t)=3(l(a+2)-(v+1))\leq a(a+1)-3 \\
        \Longleftrightarrow\; & l(a+2)\leq\frac{a}{3}(a+1)+v \\
        \Longleftrightarrow\; & l\leq\frac{a}{3}\frac{a+1}{a+2}+\frac{v}{a+2} \\
        \Longleftrightarrow\; & l\leq\frac{a}{3}+\frac{v-\frac{a}{3}}{a+2} \\
        \Longleftrightarrow\; & r\leq\frac{4a}{3}-v+1+\frac{v-\frac{a}{3}}{a+2}.
    \end{split}
\end{equation*} If $1\leq v\leq\frac{a}{3}-1$, we have that $e_2=1$ if and only if $a-v+2\leq r\leq\mathrm{min}(a+1,\frac{4a}{3}-v)=a+1$, so there are no further restrictions on $r$. Otherwise, if $\frac{a}{3}\leq v\leq a$, $e_2=1$ if and only if $a-v+2\leq r\leq\min(a+1,\frac{4a}{3}-v+1)=\frac{4a}{3}-v+1$. We conclude that $$e_2=\begin{cases}
    1 &\hbox{if }1\leq v\leq\frac{a}{3}-1\\&\hbox{or }\frac{a}{3}\leq v\leq a\hbox{ and }a-v+2\leq r\leq\frac{4a}{3}-v+1,\\
    2 &\hbox{if }\frac{a}{3}+1\leq v\leq a\hbox{ and }\frac{4a}{3}-v+2\leq r\leq a+1.
\end{cases}$$

\begin{itemize}
    \item $e_1=e_2=1$ exactly in the cases:
    \begin{itemize}
        \item $\frac{a}{3}+1\leq v\leq\frac{2a}{3}$ and $a-v+2\leq r\leq\frac{2a}{3}+1$,
        \item $\frac{2a}{3}+1\leq v\leq a$ and $a-v+2\leq r\leq\frac{4a}{3}-v+1$.
    \end{itemize} These correspond to the points $$-(2v+1)g_1-(w+v+2)g_2+(3v+4)g_3=-(v-1)-w\rho+(v+2)\rho^2-2g_3.$$
    \item $e_1=0$ and $e_2=1$ exactly in the cases:
    \begin{itemize}
        \item $1\leq v\leq\frac{a}{3}-1$,
        \item $\frac{a}{3}\leq v\leq\frac{2a}{3}-1$ and $\frac{2a}{3}+2\leq r\leq\frac{4a}{3}-v+1$.
    \end{itemize} These correspond to the points $$-(2v+2)g_1-(w+v+2)g_2+(3v+4)g_3=-v-w\rho+(v+2)\rho^2-2g_3.$$
    \item $e_1=1$ and $e_2=2$ if and only if $\frac{2a}{3}+1\leq v\leq a$ and $\frac{4a}{3}-v+2\leq r\leq\frac{2a}{3}+1$. This corresponds to the points $$-(2v+2)g_1-(w+v+3)g_2+(3v+7)g_3=-(v-1)-w\rho+(v+3)\rho^2-2g_3.$$
    \item $e_1=0$ and $e_2=2$ exactly in the cases:
    \begin{itemize}
        \item $\frac{a}{3}+1\leq v\leq\frac{2a}{3}$ and $\frac{4a}{3}-v+2\leq r\leq a+1$,
        \item $\frac{2a}{3}+1\leq v\leq a$ and $\frac{2a}{3}+2\leq r\leq a+1$.
    \end{itemize} These correspond to the points $$-(2v+3)g_1-(w+v+3)g_2+(3v+7)g_3=-v-w\rho+(v+3)\rho^2-2g_3.$$
\end{itemize}

\subsubsection*{Situation (d)}

Let us take $w=(x+1)(a+2)$ with $0\leq x\leq a-1$. In this case, $x=v+1$. First, we always have $3(r(a+1)-v)<2a(a+2)+3$, so $e_1=1$. Moreover, $3(l(a+2)-t)=3v(a+1)\leq a(a+1)-3$ if and only if $v\leq\frac{a}{3}-1$. Therefore, $e_2=1$ if $v\leq\frac{a}{3}-1$ and $e_2=2$ otherwise. Then:
\begin{itemize}
    \item $e_1=1$ and $e_2=1$ if and only if $1\leq v\leq\frac{a}{3}-1$. Together with the point $-g_2+g_3$ from \eqref{lowpoints} (so that $0\leq v\leq\frac{a}{3}-1$), this corresponds to the points $$-2vg_1-(w+v+1)g_2+(3v+1)g_3=-(v-1)-w\rho+(v+1)\rho^2-2g_3.$$
    \item $e_1=1$ and $e_2=2$ if and only if $\frac{a}{3}\leq v\leq a$. Together with the point in \eqref{highpoints} with $s=2$ (so that $\frac{a}{3}\leq v\leq a+1$), this corresponds to the points $$-(2v+1)g_1-(w+v+2)g_2+(3v+4)g_3=-(v-1)-w\rho+(v+2)\rho^2-2g_3.$$
\end{itemize}

\subsection{Representing the lattice points}\label{sec:reprpoints}

We list all lattice points in the second parallelepiped in Tables \ref{tab:points0}, \ref{tab:points1} and \ref{tab:points2} for the cases $s=0$, $s=1$ and $s=2$, respectively. We call $P_i$ (resp. $R_i$, resp. $S_i$) the set of points in the $i$-th row of Table \ref{tab:points0} (resp. \ref{tab:points1}, resp. \ref{tab:points2}).

\begin{table}
\begin{center}
    \begin{tabular}{|c|c|c|c|} \hline
        $P_i$ & Point & Situation & Region \\ \hline
        $P_1$ & \multirow{2}{*}{$-v-w\rho+(v+1)\rho^2$} & (a) & $0\leq v\leq a-1$, $1\leq r\leq a-v$ \\
        $P_2$ & & (b) & $0\leq v\leq a$, $r=a-v+1$ \\ \hline
        $P_3$ & $-v-w\rho+(v+2)\rho^2$ & (c) & $0\leq v\leq a$, $a-v+2\leq r\leq a+1$ \\ \hline
        $P_4$ & $-(v-1)-w\rho+(v+1)\rho^2$ & (d) & $0\leq v\leq a+1$, $r=0$ \\ \hline
    \end{tabular}
\caption{Lattice points in the second parallelepiped with $s=0$} \label{tab:points0}
\end{center}
\end{table}

\begin{table}
\begin{center}
    \begin{tabular}{|c|c|c|c|} \hline
        $R_i$ & Point & Situation & Region \\ \hline
        $R_1$ & \multirow{3}{*}{$-(v-1)-w\rho+(v+1)\rho^2-g_3$} & (a) & $0\leq v\leq\frac{a}{3}$, $1\leq r\leq\frac{a}{3}$ \\ 
        $R_2$ & & (a) & $\frac{a}{3}+1\leq v\leq\frac{2a}{3}-1$, $1\leq r\leq\frac{2a}{3}-v$ \\
        $R_3$ & & (d) & $0\leq v\leq\frac{2a}{3}+1$, $r=0$ \\ \hline
        $R_4$ & $-v-w\rho+(v+1)\rho^2-g_3$ & (a) & $0\leq v\leq\frac{a}{3}-1$, $\frac{a}{3}+1\leq r\leq\frac{2a}{3}-v$ \\ \hline
        $R_5$ & \multirow{5}{*}{$-(v-1)-w\rho+(v+2)\rho^2-g_3$} & (a) & $\frac{a}{3}+1\leq v\leq\frac{2a}{3}$, $\frac{2a}{3}-v+1\leq r\leq\frac{a}{3}$ \\ 
        $R_6$ & & (a) & $\frac{2a}{3}+1\leq v\leq a-1$, $1\leq r\leq a-v$ \\ 
        $R_7$ & & (b) & $\frac{2a}{3}+1\leq v\leq a$, $r=a-v+1$ \\
        $R_8$ & & (c) & $\frac{2a}{3}+1\leq v\leq a$, $a-v+2\leq r\leq\frac{a}{3}+1$ \\
        $R_9$ & & (d) & $\frac{2a}{3}+2\leq v\leq a+1$, $r=0$ \\ \hline
        $R_{10}$ & \multirow{5}{*}{$-v-w\rho+(v+2)\rho^2-g_3$} & (a) & $0\leq v\leq\frac{a}{3}$, $\frac{2a}{3}-v+1\leq r\leq a-v$ \\
        $R_{11}$ & & (a) & $\frac{a}{3}+1\leq v\leq\frac{2a}{3}-1$, $\frac{a}{3}+1\leq r\leq a-v$ \\
        $R_{12}$ & & (b) & $0\leq v\leq\frac{2a}{3}$, $r=a-v+1$ \\
        $R_{13}$ & & (c) & $1\leq v\leq\frac{2a}{3}$, $a-v+2\leq r\leq a+1$ \\
        $R_{14}$ & & (c) & $\frac{2a}{3}+1\leq v\leq a$, $\frac{a}{3}+2\leq r\leq\frac{5a}{3}-v+2$ \\ \hline
        $R_{15}$ & $-v-w\rho+(v+3)\rho^2-g_3$ & (c) & $\frac{2a}{3}+2\leq v\leq a$, $\frac{5a}{3}-v+3\leq r\leq a+1$ \\ \hline
    \end{tabular}
\caption{Lattice points in the second parallelepiped with $s=1$} \label{tab:points1}
\end{center}
\end{table}

\begin{table}
\begin{center}
    \begin{tabular}{|c|c|c|c|} \hline
        $S_i$ & Point & Situation & Region \\ \hline
        $S_1$ & \multirow{2}{*}{$-(v-1)-w\rho+(v+1)\rho^2-2g_3$} & (a) & $0\leq v\leq\frac{a}{3}-2$, $1\leq r\leq\frac{a}{3}-v-1$ \\ 
        $S_2$ & & (d) & $0\leq v\leq\frac{a}{3}-1$, $r=0$ \\ \hline
        $S_3$ & \multirow{6}{*}{$-(v-1)-w\rho+(v+2)\rho^2-2g_3$} & (a) & $0\leq v\leq\frac{a}{3}-1$, $\frac{a}{3}-v\leq r\leq\frac{2a}{3}$ \\ 
        $S_4$ & & (a) & $\frac{a}{3}\leq v\leq a-1$, $1\leq r\leq a-v$ \\ 
        $S_5$ & & (b) & $\frac{a}{3}\leq v\leq a$, $r=a-v+1$ \\ 
        $S_6$ & & (c) & $\frac{a}{3}+1\leq v\leq\frac{2a}{3}$, $a-v+2\leq r\leq\frac{2a}{3}+1$ \\
        $S_7$ & & (c) & $\frac{2a}{3}+1\leq v\leq a$, $a-v+2\leq r\leq\frac{4a}{3}-v+1$ \\
        $S_8$ & & (d) & $\frac{a}{3}\leq v\leq a+1$, $r=0$ \\ \hline
        $S_9$ & \multirow{4}{*}{$-v-w\rho+(v+2)\rho^2-2g_3$} & (a) & $0\leq v\leq\frac{a}{3}-1$, $\frac{2a}{3}+1\leq r\leq a-v$ \\
        $S_{10}$ & & (b) & $0\leq v\leq\frac{a}{3}-1$, $r=a-v+1$ \\
        $S_{11}$ & & (c) & $1\leq v\leq\frac{a}{3}-1$, $a-v+2\leq r\leq a+1$ \\
        $S_{12}$ & & (c) & $\frac{a}{3}\leq v\leq\frac{2a}{3}-1$, $\frac{2a}{3}+2\leq r\leq\frac{4a}{3}-v+1$ \\ \hline
        $S_{13}$ & $-(v-1)-w\rho+(v+3)\rho^2-2g_3$ & (c) & $\frac{2a}{3}+1\leq v\leq a$, $\frac{4a}{3}-v+2\leq r\leq\frac{2a}{3}+1$ \\ \hline
        $S_{14}$ & \multirow{2}{*}{$-v-w\rho+(v+3)\rho^2-2g_3$} & (c) & $\frac{a}{3}+1\leq v\leq\frac{2a}{3}$, $\frac{4a}{3}-v+2\leq r\leq a+1$ \\
        $S_{15}$ & & (c) & $\frac{2a}{3}+1\leq v\leq a$, $\frac{2a}{3}+2\leq r\leq a+1$ \\ \hline
    \end{tabular}
\caption{Lattice points in the second parallelepiped with $s=2$} \label{tab:points2}
\end{center}
\end{table}

For a fixed value of $s$, we identify a lattice point $\alpha_s(v,r)$ with a point $(v,r)\in\mathbb{Z}^2$. In Figures \ref{fig0}, \ref{fig1} and \ref{fig2}, we represent the previous regions in two dimensional planes, where the X-axis (resp. Y-axis) corresponds to the values of $v$ (resp. $r$). 

In the statement of Theorem \ref{teolistindec}, the element $g_3$ in (i) is the only candidate from the first parallelepiped. The elements in (ii), (iii) and (iv) come from the case $s=0$, the ones in (v) have $s=1$, and the remaining ones correspond to $s=2$. In the already mentioned figures, we highlight them in red colour.

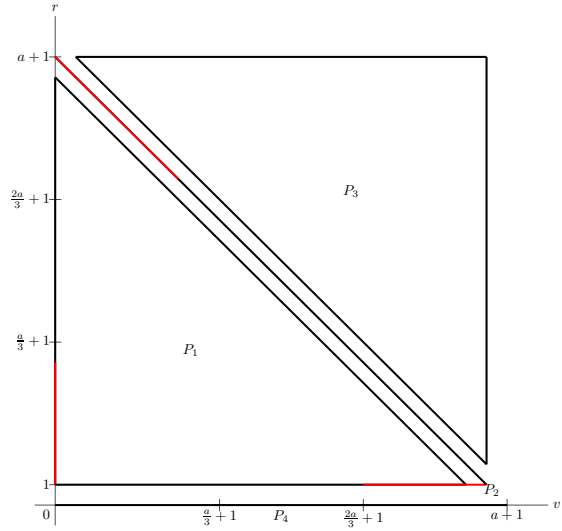
\begin{figure}
    \centering
    \resizebox{0.5\textwidth}{!}{
    \begin{tikzpicture}
        \draw[opacity=0.7] (-0.5,0) -- (12,0);
        \draw[opacity=0.7] (0,-0.5) -- (0,12);
        
        \fill (0,0) node [below left] {$0$};
        \fill (12,0) node [right] {$v$};
        \fill (0,12) node [above] {$r$};
        
        \draw[opacity=0.7] (11,-0.15) -- (11,0.15);
        \fill (11,0) node [below] {$a+1$};        
        \draw[opacity=0.7] (7.5,-0.15) -- (7.5,0.15);
        \fill (7.5,0) node [below] {$\frac{2a}{3}+1$};
        \draw[opacity=0.7] (4,-0.15) -- (4,0.15);
        \fill (4,0) node [below] {$\frac{a}{3}+1$};
        \draw[opacity=0.7] (-0.15,11) -- (0.15,11);
        \fill (0,11) node [left] {$a+1$};
        \draw[opacity=0.7] (-0.15,7.5) -- (0.15,7.5);
        \fill (0,7.5) node [left] {$\frac{2a}{3}+1$};
        \draw[opacity=0.7] (-0.15,4) -- (0.15,4);
        \fill (0,4) node [left] {$\frac{a}{3}+1$};
        \draw[opacity=0.7] (-0.15,0.5) -- (0.15,0.5);
        \fill (0,0.5) node [left] {$1$};
        
        \draw[line width=0.5mm] (0,0.5) -- (0,10.5);
        \draw[red][line width=0.5mm] (0,0.5) -- (0,3.5);
        \draw[line width=0.5mm] (0,10.5) -- (10,0.5);
        \draw[line width=0.5mm] (0,0.5) -- (10,0.5);
        \fill (3.3,3.8) node {$P_1$};
        
        \draw[red][line width=0.5mm] (7.5,0.5) -- (10.5,0.5);
        
        \draw[line width=0.5mm] (0,11) -- (10.5,0.5);
        \fill (10.625,0.375) node {$P_2$};
        
        \draw[red][line width=0.5mm] (0,11) -- (3,8);
        
        \draw[line width=0.5mm] (0.5,11) -- (10.5,11);
        \draw[line width=0.5mm] (0.5,11) -- (10.5,1);
        \draw[line width=0.5mm] (10.5,1) -- (10.5,11);
        \fill (7.2,7.7) node {$P_3$};
        
        \draw[line width=0.5mm] (0,0) -- (11,0);
        \fill (5.5,0) node [below] {$P_4$};
        
        \end{tikzpicture}
    }
    \caption{Lattice points with $s=0$}
    \label{fig0}
\end{figure}

\begin{figure}[h!]
\begin{minipage}{0.5 \textwidth}
    \centering
    \resizebox{1\textwidth}{!}{
    \begin{tikzpicture}
        \draw[opacity=0.7] (-0.5,0) -- (12,0);
        \draw[opacity=0.7] (0,-0.5) -- (0,12);
        
        \fill (0,0) node [below left] {$0$};
        \fill (12,0) node [right] {$v$};
        \fill (0,12) node [above] {$r$};
        
        \draw[opacity=0.7] (11,-0.15) -- (11,0.15);
        \fill (11,0) node [below] {$a+1$};        
        \draw[opacity=0.7] (7.5,-0.15) -- (7.5,0.15);
        \fill (7.5,0) node [below] {$\frac{2a}{3}+1$};
        \draw[opacity=0.7] (4,-0.15) -- (4,0.15);
        \fill (4,0) node [below] {$\frac{a}{3}+1$};
        \draw[opacity=0.7] (-0.15,11) -- (0.15,11);
        \fill (0,11) node [left] {$a+1$};
        \draw[opacity=0.7] (-0.15,7.5) -- (0.15,7.5);
        \fill (0,7.5) node [left] {$\frac{2a}{3}+1$};
        \draw[opacity=0.7] (-0.15,4) -- (0.15,4);
        \fill (0,4) node [left] {$\frac{a}{3}+1$};
        \draw[opacity=0.7] (-0.15,0.5) -- (0.15,0.5);
        \fill (0,0.5) node [left] {$1$};
        
        \draw[line width=0.5mm] (0,3.5) -- (3.5,3.5);
        \draw[line width=0.5mm] (0,0.5) -- (0,3.5);
        \draw[line width=0.5mm] (0,0.5) -- (3.5,0.5);
        \draw[line width=0.5mm] (3.5,3.5) -- (3.5,0.5);
        \fill (1.75,2) node {$R_1$};
        
        \draw[line width=0.5mm] (4,0.5) -- (6.5,0.5);
        \draw[line width=0.5mm] (4,0.5) -- (4,3);
        \draw[line width=0.5mm] (4,3) -- (6.5,0.5);
        \fill (4.8,1.3) node {$R_2$};
        
        \draw[line width=0.5mm] (0,0) -- (7.5,0);
        \fill (5.5,0) node [below] {$R_{3}$};

        \fill[red] (0,4) -- (3,4) -- (0,7) -- cycle;
        \draw[line width=0.5mm] (0,4) -- (3,4);
        \draw[line width=0.5mm] (0,4) -- (0,7);
        \draw[line width=0.5mm] (0,7) -- (3,4);
        \fill (1,5) node {$R_4$};
        
        \draw[line width=0.5mm] (4,3.5) -- (7,3.5);
        \draw[line width=0.5mm] (4,3.5) -- (7,0.5);
        \draw[line width=0.5mm] (7,0.5) -- (7,3.5);
        \fill (6,2.5) node {$R_5$};
        
        \draw[line width=0.5mm] (7.5,0.5) -- (10,0.5);
        \draw[line width=0.5mm] (7.5,0.5) -- (7.5,3);
        \draw[line width=0.5mm] (10,0.5) -- (7.5,3);
        \fill (8.3,1.3) node {$R_6$};
        
        \draw[line width=0.5mm] (10.5,0.5) -- (7.5,3.5);
        \fill (8.75,2) node {$R_7$};

        \draw[line width=0.5mm] (7.5,4) -- (10.5,4);
        \draw[line width=0.5mm] (7.5,4) -- (10.5,1);
        \draw[line width=0.5mm] (10.5,4) -- (10.5,1);
        \fill (9.5,3) node {$R_8$};
        
        \draw[line width=0.5mm] (8,0) -- (11,0);
        \fill (9.5,0) node [below] {$R_9$};
        
        \draw[line width=0.5mm] (0,7.5) -- (0,10.5);
        \draw[line width=0.5mm] (0,7.5) -- (3.5,4);
        \draw[line width=0.5mm] (0,10.5) -- (3.5,7);
        \draw[line width=0.5mm] (3.5,4) -- (3.5,7);
        \fill (1.75,7.25) node {$R_{10}$};
        
        \draw[line width=0.5mm] (4,4) -- (6.5,4);
        \draw[line width=0.5mm] (4,4) -- (4,6.5);
        \draw[line width=0.5mm] (4,6.5) -- (6.5,4);
        \fill (4.8,4.8) node {$R_{11}$};
        
        \draw[line width=0.5mm] (0,11) -- (7,4);
        \fill (3.75,7.25) node [below] {$R_{12}$};
        
        \draw[line width=0.5mm] (0.5,11) -- (7,11);
        \draw[line width=0.5mm] (0.5,11) -- (7,4.5);
        \draw[line width=0.5mm] (7,11) -- (7,4.5);
        \fill (4.9,8.9) node {$R_{13}$};
        
        \draw[line width=0.5mm] (7.5,4.5) -- (10.5,4.5);
        \draw[line width=0.5mm] (7.5,4.5) -- (7.5,11);
        \draw[line width=0.5mm] (7.5,11) -- (10.5,8);
        \draw[line width=0.5mm] (10.5,4.5) -- (10.5,8);
        \fill (9,7.75) node {$R_{14}$};
        
        \draw[line width=0.5mm] (8,11) -- (10.5,11);
        \draw[line width=0.5mm] (8,11) -- (10.5,8.5);
        \draw[line width=0.5mm] (10.5,11) -- (10.5,8.5);
        \fill (9.7,10.2) node {$R_{15}$};
        
        \end{tikzpicture}
    }
    \caption{Lattice points with $s=1$}
    \label{fig1}
\end{minipage}
\begin{minipage}{0.5 \textwidth}
    \centering
    \resizebox{1\textwidth}{!}{
    \begin{tikzpicture}
        \draw[opacity=0.7] (-0.5,0) -- (12,0);
        \draw[opacity=0.7] (0,-0.5) -- (0,12);
        
        \fill (0,0) node [below left] {$0$};
        \fill (12,0) node [right] {$v$};
        \fill (0,12) node [above] {$r$};
        
        \draw[opacity=0.7] (11,-0.15) -- (11,0.15);
        \fill (11,0) node [below] {$a+1$};        
        \draw[opacity=0.7] (7.5,-0.15) -- (7.5,0.15);
        \fill (7.5,0) node [below] {$\frac{2a}{3}+1$};
        \draw[opacity=0.7] (4,-0.15) -- (4,0.15);
        \fill (4,0) node [below] {$\frac{a}{3}+1$};
        \draw[opacity=0.7] (-0.15,11) -- (0.15,11);
        \fill (0,11) node [left] {$a+1$};
        \draw[opacity=0.7] (-0.15,7.5) -- (0.15,7.5);
        \fill (0,7.5) node [left] {$\frac{2a}{3}+1$};
        \draw[opacity=0.7] (-0.15,4) -- (0.15,4);
        \fill (0,4) node [left] {$\frac{a}{3}+1$};
        \draw[opacity=0.7] (-0.15,0.5) -- (0.15,0.5);
        \fill (0,0.5) node [left] {$1$};
        
        \draw[line width=0.5mm] (0,0.5) -- (2.5,0.5);
        \draw[line width=0.5mm] (0,0.5) -- (0,3);
        \draw[line width=0.5mm] (0,3) -- (2.5,0.5);
        \fill (0.8,1.3) node {$S_1$};
        
        \draw[line width=0.5mm] (0,0) -- (3,0);
        \fill (1.5,0) node [below] {$S_2$};
        
        \draw[red][line width=0.5mm] (0,0) -- (0,3);

        \draw[line width=0.5mm] (0,3.5) -- (3,0.5);
        \draw[line width=0.5mm] (0,3.5) -- (0,7);
        \draw[line width=0.5mm] (0,7) -- (3,7);
        \draw[line width=0.5mm] (3,7) -- (3,0.5);
        \fill (1.5,3.7) node {$S_3$};
        
        \draw[line width=0.5mm] (3.5,0.5) -- (10,0.5);
        \draw[line width=0.5mm] (3.5,0.5) -- (3.5,7);
        \draw[line width=0.5mm] (10,0.5) -- (3.5,7);
        \fill (5.7,3) node {$S_4$};
        
        \draw[line width=0.5mm] (10.5,0.5) -- (3.5,7.5);
        \fill (7.125,4.125) node {$S_5$};
        
        \draw[line width=0.5mm] (4,7.5) -- (7,7.5);
        \draw[line width=0.5mm] (4,7.5) -- (7,4.5);
        \draw[line width=0.5mm] (7,4.5) -- (7,7.5);
        \fill (6,6.5) node {$S_6$};
        
        \draw[line width=0.5mm] (7.5,4) -- (7.5,7);
        \draw[line width=0.5mm] (7.5,4) -- (10.5,1);
        \draw[line width=0.5mm] (7.5,7) -- (10.5,4);
        \draw[line width=0.5mm] (10.5,1) -- (10.5,4);
        \fill (9.2,4) node {$S_7$};
        
        \draw[line width=0.5mm] (3.5,0) -- (11,0);
        \fill (8.75,0) node [below] {$S_8$};
        
        \draw[red][line width=0.5mm] (0,7.5) -- (3,7.5);
        \draw[line width=0.5mm] (0,7.5) -- (0,10.5);
        \draw[line width=0.5mm] (0,10.5) -- (3,7.5);
        \fill (1,8.5) node {$S_9$};
        
        \draw[line width=0.5mm] (0,11) -- (3,8);
        \fill (3.125,7.875) node {$S_{10}$};
        
        \draw[line width=0.5mm] (0.5,11) -- (3,11);
        \draw[line width=0.5mm] (0.5,11) -- (3,8.5);
        \draw[line width=0.5mm] (3,8.5) -- (3,11);
        \fill (2.2,10.2) node {$S_{11}$};
        
        \draw[line width=0.5mm] (3.5,8) -- (3.5,11);
        \draw[line width=0.5mm] (3.5,8) -- (6.5,8);
        \draw[red][line width=0.5mm] (6.5,8) -- (3.5,11);
        \fill (4.5,9) node {$S_{12}$};
        
        \draw[line width=0.5mm] (7.5,7.5) -- (10.5,7.5);
        \draw[line width=0.5mm] (7.5,7.5) -- (10.5,4.5);
        \draw[line width=0.5mm] (10.5,4.5) -- (10.5,7.5);
        \fill (9.5,6.5) node {$S_{13}$};
        
        \draw[line width=0.5mm] (4,11) -- (7,11);
        \draw[line width=0.5mm] (4,11) -- (7,8);
        \draw[line width=0.5mm] (7,8) -- (7,11);
        \fill (6,10) node {$S_{14}$};
        
        \draw[line width=0.5mm] (7.5,8) -- (10.5,8);
        \draw[line width=0.5mm] (7.5,8) -- (7.5,11);
        \draw[line width=0.5mm] (7.5,11) -- (10.5,11);
        \draw[line width=0.5mm] (10.5,8) -- (10.5,11);
        \fill (9,9.5) node {$S_{15}$};
        
        \end{tikzpicture}
    }
    \caption{Lattice points with $s=2$}
    \label{fig2}
\end{minipage}
\end{figure}

\subsection{Proof of the main theorem}

We proceed to validate Theorem \ref{teolistindec}. We need to prove that the elements listed therein are indecomposable and that all the other lattice points in the parallelepipeds are decomposable.

\subsubsection{Proving the indecomposability} \label{subsubsec:indecomposable}

We prove the indecomposability of the elements in the statement of Theorem \ref{teolistindec}. The main tool will be the codifferent $\mathcal{O}_K^{\vee}$ of $K$, for which we first determine an $\mathcal{O}_K$-basis $\{\varphi_j\}_{j=1}^3$. Note that since the codifferent is the dual lattice of $\mathcal{O}_K$, its Gram matrix is the inverse of the Gram matrix of an integral basis.

Let $g_1=1$, $g_2=\rho$, $g_3=\frac{1+\rho+\rho^2}{3}$, which as already mentioned is an integral basis of $K$. Let $M=(\mathrm{Tr}(g_ig_k))_{i,k=1}^3$. Then the coordinates of $\varphi_j$ with respect to the basis $\{g_i\}_{i=1}^3$ is given by the $j$-th column of the matrix $M^{-1}$. 
Carrying out the product traces, we have 
$$M=\begin{pmatrix}
3 & a & \frac{a^2+3a+9}{3} \\
a & a^2+2a+6 & \frac{(a+1)(a^2+3a+9)}{3} \\
\frac{a^2+3a+9}{3} & \frac{(a+1)(a^2+3a+9)}{3} & \frac{(a^2+3a+5)(a^2+3a+9)}{9}
\end{pmatrix}.$$ From this, we find $$M^{-1}=\frac{1}{a^2+3a+9}\begin{pmatrix}
a^2+7a+21 & a^2+7a+9 & -3(a+6) \\
a^2+7a+9 & 2(a^2+3a+3) & -3(2a+3) \\
-3(a+6) & -3(2a+3) & 18
\end{pmatrix}.$$ Then, we conclude that \begin{equation*}
    \begin{split}
        \varphi_1&=\frac{1}{a^2+3a+9}\Big(a^2+7a+21+(a^2+7a+9)\rho-3(a+6)\frac{1+\rho+\rho^2}{3}\Big), \\ \varphi_2&=\frac{1}{a^2+3a+9}\Big(a^2+7a+9+2(a^2+3a+3)\rho-3(2a+3)\frac{1+\rho+\rho^2}{3}\Big), \\ \varphi_3&=\frac{1}{a^2+3a+9}\Big(-3(a+6)-3(2a+3)\rho+18\frac{1+\rho+\rho^2}{3}\Big).
    \end{split}
\end{equation*}

We begin with the elements of minimal trace 1.

\begin{pro}\label{proindecs12} The elements in Theorem \ref{teolistindec} (i), (v), (vi), (vii) and (viii) have minimal trace $1$, and consequently they are indecomposables.
\end{pro}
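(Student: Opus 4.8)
The plan is to exhibit, for each of the elements in (i), (v), (vi), (vii) and (viii), a totally positive element of the codifferent $\varphi\in\mathcal{O}_K^{\vee,+}$ such that $\mathrm{Tr}(\varphi\alpha)=1$; since the minimal trace is a positive integer and every totally positive algebraic integer of minimal trace $1$ is indecomposable (as recalled in Section \ref{sect:prelim}), this will finish the proof. The key computational tool is the remark at the end of Section \ref{firstsubfamily}: writing $\alpha=\sum a_ig_i$ in the integral basis and $\varphi=\sum b_j\varphi_j$ in the dual basis $\{\varphi_1,\varphi_2,\varphi_3\}$ computed there, one simply has $\mathrm{Tr}(\varphi\alpha)=\sum a_ib_i$. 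So the task reduces to: (a) read off the integral-basis coordinates $(a_1,a_2,a_3)$ of each listed element (these are given explicitly in the statement of Theorem \ref{teolistindec}, e.g. $g_3$ has coordinates $(0,0,1)$, the elements in (v) have coordinates $(-(2v+1),-(v(a+3)+r+1),3v+2)$, etc.); (b) produce suitable integer vectors $(b_1,b_2,b_3)$; (c) check that $\sum a_ib_i=1$; and (d) verify that the resulting $\varphi=b_1\varphi_1+b_2\varphi_2+b_3\varphi_3$ is totally positive.

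First I would handle (i): for $g_3$, the obvious candidate is $\varphi_3$ itself, since $\mathrm{Tr}(g_3\varphi_3)=1$ by construction, and one checks $\varphi_3\succ0$ by evaluating $\sigma_i(\varphi_3)$ using the explicit formula $\varphi_3=\frac{1}{a^2+3a+9}(-3(a+6)-3(2a+3)\rho+18\frac{1+\rho+\rho^2}{3})$ together with the bounds on $\rho,\rho',\rho''$ recorded in Section \ref{sectscf}; the dominant term is $\frac{18}{a^2+3a+9}\cdot\frac{(1+\rho)^2}{3}$-type expressions, all manifestly positive for $a>12$. For the families (v)–(viii) I expect that a single linear form $\varphi$ (or one depending in a controlled, piecewise-linear way on the parameters $v,r$) will work across each family — this is the pattern in \cite[Section 5]{kalatinkova}, where in the case $s=0$ the codifferent element $\varphi=\varphi_2$ (or a small modification) handles the whole triangle $\blacktriangle$. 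Concretely, I would look for $\varphi$ among small integer combinations such as $\varphi_2$, $\varphi_1+\varphi_2$, $\varphi_2+\varphi_3$, etc., compute $\mathrm{Tr}(\varphi\alpha)=\sum a_ib_i$ symbolically in $a,v,r$, and pin down which combination forces the value $1$; the transformations $T_1,T_2$ of Section \ref{sect:t1t2} should also help, since $\mathrm{Tr}$ is Galois-stable and hence a codifferent witness for one region transports to a witness for its $T_1$- or $T_2$-image — so Lemmas \ref{lemat1s=1} and \ref{lemat1s=2} can be used to reduce the number of regions for which a witness must be found by hand (e.g. since $T_1(R_5)=R_8$ and $T_1(R_9\cup R_{15})=R_5$, a witness for $R_5$ propagates).

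The main obstacle I anticipate is the total-positivity check in (d). Unlike the coordinate computation, which is pure linear algebra, verifying $\sigma_i(\varphi)>0$ for $i=1,2,3$ requires careful estimates: for the embedding with $\rho\approx a+1$ the relevant combination is typically positive with a lot of room, but for the embeddings with $\rho'\approx-1$ and $\rho''\approx 0^-$ the expression can be a small difference of comparable quantities, and one must use the sharp inequalities $a+1<\rho<a+1+\frac{2}{a}$, $-1-\frac{1}{a}<\rho'<-1-\frac{1}{2a}$, $-\frac{1}{a+2}<\rho''<-\frac{1}{a+3}$ (valid for $a\geq 7$, hence for our $a>12$) rather than crude bounds. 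A secondary subtlety is that the witness $\varphi$ may genuinely need to vary with $v$ (the $e_1,e_2$-dependence suggests the geometry is not uniform), so I would organize the proof family-by-family, and within (v) possibly split according to the two subranges of $r$ appearing in the statement. Once the witness linear forms are identified and their positivity confirmed on the three embeddings, the computation $\mathrm{Tr}(\varphi\alpha)=1$ is immediate from the dual-basis formula, and indecomposability follows.
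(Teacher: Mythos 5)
Your overall strategy is the paper's: exhibit, for each listed element $\alpha$, a totally positive codifferent element $\varphi$ with $\mathrm{Tr}(\varphi\alpha)=1$ via the dual-basis formula, and transport witnesses along Galois conjugation between the families. However, your concrete witness for (i) is wrong, and this is not a repairable detail of the positivity check: $\varphi_3$ can never be totally positive, because $\mathrm{Tr}(\varphi_3)=\mathrm{Tr}(\varphi_3 g_1)=\delta_{31}=0$, whereas a nonzero totally positive element has strictly positive trace. (The same obstruction rules out every single dual-basis element $\varphi_2,\varphi_3$, and indeed your numerical sketch would have revealed it: at $\rho\approx a+1$ one finds $\varphi_3\approx -9/\Delta<0$.) The paper's witness for $g_3$ is instead $\varphi_1+\varphi_3$, whose trace against $g_3$ is still $1$ and whose total positivity is read off from its minimal polynomial $x^3-x^2+\frac{a+3}{\Delta}x-\frac{2a+3}{\Delta^2}$, all of whose real roots must be positive by the sign pattern of the coefficients --- a cleaner device than the embedding estimates you propose, and one you should adopt, since for these borderline elements the three conjugates are genuinely of different orders of magnitude.

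For the families (v)--(viii) your proposal stops at a search heuristic and does not actually produce the witnesses, so as written the proof is incomplete there. The combinations that work are not on your candidate list: the paper uses $3\varphi_1+2\varphi_3$ for the whole triangle (v) (minimal polynomial $x^3-3x^2+\frac{18}{\Delta}x-\frac{27}{\Delta^2}$, hence totally positive, and $\mathrm{Tr}=-3(2v+1)+2(3v+2)=1$ uniformly in $v,r$, so no case split in $r$ is needed), and $\varphi_1+\varphi_3$ again for (vi), with (vii) and (viii) handled exactly as you suggest, being associates of conjugates of elements of (vi). So the architecture of your argument is sound, but you must replace $\varphi_3$ by $\varphi_1+\varphi_3$ in (i), supply the explicit witnesses for (v) and (vi), and preferably certify total positivity through the characteristic polynomial rather than interval arithmetic on the embeddings.
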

\begin{proof}
For (i), i.e. the point $g_3$, we just note that $\varphi_1+\varphi_3\in\mathcal{O}_K^{\vee}$ is a root of the polynomial $$x^3-x^2+\frac{a+3}{a^2+3a+9}x-\frac{2a+3}{(a^2+3a+9)^2},$$ so it is totally positive, and that $$\mathrm{Tr}((\varphi_1+\varphi_3)g_3)=1.$$

For (v), let $0\leq v\leq\frac{a}{3}-1$ and $\frac{a}{3}+1\leq r\leq\frac{2a}{3}-v$. The element $3\varphi_1+2\varphi_3\in\mathcal{O}_K^{\vee}$ is a root of the polynomial $$x^3-3x^2+\frac{18}{a^2+3a+9}x-\frac{27}{(a^2+3a+9)^2},$$ so it is totally positive. Now, we have \begin{equation*}
    \begin{split}
        &\mathrm{Tr}((3\varphi_1+2\varphi_3)(-(2v+1)g_1-(v(a+3)+r+1)g_2+(3v+2)g_3)=\\&-6v-3+6v+4=1.
    \end{split}
\end{equation*}

Since the elements in (vii) and (viii) are associated with conjugates of elements in (vi), it is enough to work with the last ones. Let $0\leq r\leq\frac{a}{3}-1$. We already know that $\varphi_1+\varphi_3\in\mathcal{O}_K^{\vee,+}$. Now, we have $$\mathrm{Tr}((\varphi_1+\varphi_3)(-(r+1)g_2+g_3))=1.$$
\end{proof}

On the other hand, the remaining indecomposables have minimal trace $2$.

\begin{pro}\label{promintrace2} The elements in Theorem \ref{teolistindec} (ii), (iii) and (iv) have minimal trace $2$.
\end{pro}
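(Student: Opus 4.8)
The plan is to reduce (iii) and (iv) to (ii) and then handle (ii) by pairing against well-chosen totally positive elements of $\mathcal{O}_K$. For the reduction I would use the transformations $T_1,T_2$ from Section \ref{sect:t1t2}: applying the formulas recorded there for their action on the points with $s=0$, one checks that $T_2$ maps the element of (ii) with parameter $r$ to the element of (iii) with parameter $v=a+1-r$, and $T_1$ maps it to the element of (iv) with parameter $v=r-1$; as $r$ runs over $1,\dots,\frac a3$ these images exhaust the lists (iii) and (iv). Since $T_1(\alpha)$ and $T_2(\alpha)$ are Galois conjugates of $\alpha$ multiplied by the totally positive units $-1-a-(a^2+3a+3)\rho+(a+2)\rho^2$ and $\rho^2$ respectively, and the minimal trace is unchanged under Galois conjugation and under multiplication by totally positive units, it then suffices to prove that each $\alpha_r:=-g_1-(r+1)g_2+3g_3=-r\rho+\rho^2$ with $1\le r\le\frac a3$ has minimal trace $2$.

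For the bound $\le 2$ I would take $\delta=\varphi_1+\varphi_3$, which is totally positive by Proposition \ref{proindecs12}; the duality identity $\mathrm{Tr}(\varphi\alpha)=\sum a_ib_i$ immediately gives $\mathrm{Tr}(\delta\alpha_r)=2$. For the bound $\ge 2$, write an arbitrary $\varphi\in\mathcal{O}_K^{\vee,+}$ as $\varphi=b_1\varphi_1+b_2\varphi_2+b_3\varphi_3$ with $b_i\in\mathbb{Z}$. The same duality identity gives $\mathrm{Tr}(\varphi)=b_1$, $\mathrm{Tr}(\varphi\rho)=b_2$, $\mathrm{Tr}(\varphi\rho^2)=3b_3-b_1-b_2$, and, after computing the coordinates $(2-2a,-(a+1)^2,3a)$ of $(\rho'')^{2}=\varepsilon_2^{-1}$ in the basis $\{g_i\}$, $\mathrm{Tr}(\varphi\varepsilon_2^{-1})=(2-2a)b_1-(a+1)^2b_2+3ab_3$; moreover $\mathrm{Tr}(\varphi\alpha_r)=\mathrm{Tr}(\varphi\rho^2)-rb_2$. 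Since $1$, $\rho^2$ and $\varepsilon_2^{-1}$ are totally positive, all four of these traces are positive integers, so in particular $b_1\ge 1$ and $\mathrm{Tr}(\varphi\alpha_r)\ge 1$. If $b_2\le -1$ then $\mathrm{Tr}(\varphi\alpha_r)=\mathrm{Tr}(\varphi\rho^2)-rb_2\ge 1+r\ge 2$ at once. If $b_2\ge 0$, I would argue by contradiction: if $\mathrm{Tr}(\varphi\alpha_r)=1$ then $3b_3=1+b_1+(r+1)b_2$, and substituting this into $\mathrm{Tr}(\varphi\varepsilon_2^{-1})\ge 1$ yields
\[
(a-2)b_1+\bigl((a+1)^2-a(r+1)\bigr)b_2\le a-1 .
\]
For $r\le\frac a3$ the coefficient of $b_2$ exceeds $\frac{2a^2}{3}$, which together with $b_1\ge 1$, $b_2\ge 0$ and $a>12$ forces $b_2=0$, and then $(a-2)b_1\le a-1$ forces $b_1=1$; but then $3b_3=2$, impossible for $b_3\in\mathbb{Z}$. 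Hence $\mathrm{Tr}(\varphi\alpha_r)\ge 2$ for every $\varphi\in\mathcal{O}_K^{\vee,+}$, and with the upper bound the minimal trace of $\alpha_r$ is exactly $2$.

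The main obstacle is the lower bound, and within it the choice of auxiliary element: pairing $\varphi$ against $1$ and $\rho^2$ alone gives only $b_1\ge 1$ and $\mathrm{Tr}(\varphi\alpha_r)\ge 1$, which is too weak, whereas $(\rho'')^2$ works precisely because its $g_3$-coordinate $3a$ is large, so that after eliminating $b_3$ the resulting inequality is sharp enough to pin down $b_1$ and $b_2$. (One could instead try to rule out the case $b_2\ge 0$ by showing that the first Minkowski coordinate $\sigma_1(\varphi)$ is forced negative, using the bounds $a+1<\rho<a+1+\frac 2a$ for the conjugates of $\varphi_1,\varphi_2,\varphi_3$, but that route seems to demand more delicate numerical control.)
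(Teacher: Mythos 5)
Your proof is correct. The overall skeleton coincides with the paper's: reduce (iii) and (iv) to (ii) via the transformations $T_1,T_2$ (the paper phrases this as $T_1$ cycling (ii)$\to$(iv)$\to$(iii)$\to$(ii), which is the same reduction), and get the upper bound from $\mathrm{Tr}((\varphi_1+\varphi_3)\alpha_r)=2$. Where you genuinely diverge is in the lower bound. The paper also assumes a totally positive $\delta=u_1\varphi_1+u_2\varphi_2+u_3\varphi_3$ with $\mathrm{Tr}(\delta\alpha_r)=1$ and pairs it against auxiliary totally positive integers, but its test elements are the lattice points $-g_1-rg_2+3g_3$, $-g_1-(r+2)g_2+3g_3$, $g_3$ and $-g_1-(\tfrac a3+2)g_2+2g_3$; these give $u_2\geq 0$, $u_2\leq 0$, $u_3>0$ and $u_3\leq 0$, a sign contradiction. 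You instead pair against the totally positive units $1$, $\rho^2$ and $(\rho'')^2$, eliminate $b_3$, and land on the divisibility obstruction $3b_3=2$. Your choice has the small advantage that the total positivity of the test elements is immediate (they are squares of units), whereas the paper's neighbours of $\alpha_r$ require a word of justification; the paper's choice has the advantage of needing no coordinate computation for $(\rho'')^2$ and of pinning down $u_2=0$ and the sign of $u_3$ directly, which is the template it reuses elsewhere. I verified your coordinates $(\rho'')^2=(2-2a)g_1-(a+1)^2g_2+3ag_3$ and the resulting inequality $(a-2)b_1+\bigl((a+1)^2-a(r+1)\bigr)b_2\le a-1$; both are right, and the conclusion follows as you state.
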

\begin{proof}
Let us recover the notation in Section \ref{sects=0}. We have that $T_1$ maps bijectively the points from (ii) to (iv), from (iv) to (iii), and from (iii) to (ii). This means that the elements in (iii) and (iv) are associated with conjugates of elements in (ii). Moreover, the points in (ii) lie in $\blacktriangle_0(a)$. Therefore, it is enough to prove the statement of the points in (ii).

Let $\alpha=-g_1-(r+1)g_2+3g_3$, $1\leq r\leq\frac{a}{3}$, be a lattice point as in (ii). Since $\mathrm{Tr}((\varphi_1+\varphi_3)\alpha)=2$, the minimal trace of $\alpha$ is upper bounded by $2$. Let us assume that there exists a totally positive element of the codifferent $\delta=u_1\varphi_1+u_2\varphi_2+u_3\varphi_3$ such that $\mathrm{Tr}(\alpha\delta)=1$, i.e., $$\mathrm{Tr}(\alpha\delta)=-u_1-(r+1)u_2+3u_3=1.$$ It implies that $u_1=-(r+1)u_2+3u_3-1$. Now we will consider several totally positive elements, whose product with $\delta$ gives a totally positive element with a positive trace. We will show that these traces cannot be all positive at the same time.

First of all, let us consider the element $-g_1-rg_2+3g_3$. This element is totally positive for $1\leq r\leq\frac{a}{3}$. Note that it is also true for $r=1$ as $-g_1-g_2+3g_3=\rho^2$. Thus, we obtain $$\mathrm{Tr}(\delta(-g_1-rg_2+3g_3))=(r+1)u_2-3u_3+1-ru_2+3u_3=u_2+1.$$ This trace is positive only if $u_2\geq0$. Similarly, when we consider the totally positive element $-g_1-(r+2)g_2+3g_3$, we get $$\mathrm{Tr}(\delta(-g_1-(r+2)g_2+3g_3))=(r+1)u_2-3u_3+1-(r+2)u_2+3u_3=-u_2+1.$$ This trace is positive only if $u_2\leq0$, which, together with the previous part, gives $u_2=0$.

When we consider the element $g_3$, we can conclude that $$\mathrm{Tr}(\delta g_3)=u_3,$$ which implies $u_3>0$. On the other hand, for $-g_1-(\frac{a}{3}+2)g_2+2g_3$, we see that $$\mathrm{Tr}\Big(\delta\Big(-g_1-\Big(\frac{a}{3}+2\Big)g_2+2g_3\Big)\Big)=-3u_3+1+2u_3=-u_3+1,$$ which leads to $u_3\leq0$. Thus, there is no $u_3\in\mathbb{Z}$ for which $\delta$ would be totally positive.
\end{proof}

Next, we prove that these are indecomposable.

\begin{pro}\label{proindecs0} The elements in Theorem \ref{teolistindec} (ii), (iii) and (iv) are indecomposable.
\end{pro}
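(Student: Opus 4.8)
\emph{Plan of proof.} The strategy is: (1) reduce everything to the elements in (ii); (2) argue by contradiction, using the codifferent element $\varphi_1+\varphi_3$ to extract a linear constraint on the coordinates of a hypothetical summand; (3) combine this with positivity in the three embeddings to pin the summand down completely; (4) check the complementary summand is not totally positive.

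\textbf{Reduction to (ii).} The transformation $T_1$ of Section \ref{sect:t1t2} is $\xi\mapsto\xi'\cdot u$ with $u=-1-a-(a^2+3a+3)\rho+(a+2)\rho^2=\varepsilon_1\varepsilon_2^{-1}$, a totally positive unit; hence $T_1$ is a bijection of $\mathcal O_K^+$ onto itself that is additive, so it preserves indecomposability. By (the computation in) Proposition \ref{promintrace2}, $T_1$ permutes the families (ii)$\to$(iv)$\to$(iii)$\to$(ii). Therefore it suffices to prove that each $\alpha=-g_1-(r+1)g_2+3g_3=-r\rho+\rho^2$ with $1\le r\le\frac a3$ is indecomposable.

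\textbf{Setting up the contradiction.} Suppose $\alpha=\beta+\gamma$ with $\beta,\gamma\in\mathcal O_K^+$, and write $\beta=b_1g_1+b_2g_2+b_3g_3$, $\gamma=c_1g_1+c_2g_2+c_3g_3$. Since $\varphi_1+\varphi_3\in\mathcal O_K^{\vee,+}$ (Proposition \ref{proindecs12}) and $\mathrm{Tr}((\varphi_1+\varphi_3)\alpha)=2$, the two positive integers $\mathrm{Tr}((\varphi_1+\varphi_3)\beta)$ and $\mathrm{Tr}((\varphi_1+\varphi_3)\gamma)$ sum to $2$, so each equals $1$; using $\mathrm{Tr}((\varphi_1+\varphi_3)\xi)=\xi\text{-coord}_1+\xi\text{-coord}_3$ we get $b_1+b_3=c_1+c_3=1$. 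From $b_3+c_3=3$, either $b_3\equiv c_3\equiv 0\pmod 3$ — then $\beta,\gamma\in\mathbb Z[\rho]^+$ and $\alpha=\beta+\gamma$ contradicts the indecomposability of $\alpha\in\blacktriangle$ in $\mathbb Z[\rho]$ from \cite[Theorem 1.2]{kalatinkova} — or, interchanging $\beta,\gamma$ if needed, $b_3\equiv1$ and $c_3\equiv2\pmod 3$.

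\textbf{Pinning down $\beta$ and finishing.} Write $b_3=3m+1$, so $b_1=-3m$ and $3\beta=(1-6m)+(1+3m+3b_2)\rho+(1+3m)\rho^2$. Imposing $\sigma_i(\beta)>0$ for $i=1,2,3$ and using $a+1<\rho<a+1+\frac2a$, $-1-\frac1a<\rho'<-1-\frac1{2a}$, $-\frac1{a+2}<\rho''<-\frac1{a+3}$: when $m\ne 0$ the $\rho^2$-coordinate $1+3m$ of $3\beta$ has absolute value $\ge2$, and positivity in two of the three embeddings then forces $b_2$ into two disjoint ranges (for $m\ge1$ the conditions $\sigma_1(\beta)>0$ and $\sigma_3(\beta)>0$ clash; for $m\le-1$ the conditions $\sigma_1(\beta)>0$ and $\sigma_2(\beta)>0$ clash), a contradiction. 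Hence $m=0$, i.e. $\beta=b_2\rho+g_3$ and $3\beta=1+(3b_2+1)\rho+\rho^2$; now $\sigma_2(\beta)>0$ gives $b_2\le0$ and $\sigma_1(\beta)>0$ gives $b_2\ge-\frac a3$, while $\sigma_3(\beta)>0$ is automatic. Finally, for every such $\beta$ one has $3\gamma=3\alpha-3\beta=-1-(3(b_2+r)+1)\rho+2\rho^2$, and since $b_2+r\le\frac a3$ a short estimate with $-\frac1{a+2}<\rho''<0$ gives
\[
\sigma_3(3\gamma)=-1-(3(b_2+r)+1)\rho''+2\rho''^2\le-\frac1{a+2}+\frac2{(a+3)^2}<0,
\]
so $\gamma\not\succ0$, a contradiction. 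Applying $T_1$ transfers the conclusion to families (iii) and (iv).

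\textbf{Main obstacle.} The delicate step is pinning $\beta$ down to $\{b_2\rho+g_3:\ -\frac a3\le b_2\le 0\}$: the identity $b_1+b_3=1$ and the congruence $b_3\equiv1$ leave a two-parameter family, and ruling out $m\ne0$ requires handling the positivity inequalities in all three embeddings simultaneously, taking care that cancellations between $b_1,b_2,b_3$ (in particular a large negative $b_2$ compensating a negative $\rho^2$-coordinate in one embedding) do not create spurious totally positive solutions.
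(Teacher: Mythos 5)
Your proof is correct and follows essentially the same route as the paper's: reduce to family (ii) via $T_1$, use the totally positive codifferent element $\varphi_1+\varphi_3$ to force each summand to have first-plus-third $g$-coordinate equal to $1$, eliminate all but the summands $b_2\rho+g_3$ with $-\tfrac a3\le b_2\le 0$ by positivity at $\rho$ and $\rho'$, and then rule out the complementary summand at $\rho''$. The one soft spot is that the exclusion of $m\ne 0$ is asserted rather than computed (the paper carries out the analogous explicit estimates when it rules out $v_1\le -2$), but the clashes of embeddings you claim are in fact correct, so this is a matter of detail rather than of substance.
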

\begin{proof}
As in Proposition \ref{promintrace2}, it is enough to prove the indecomposability for points in (ii).

Let $\alpha=-g_1-(r+1)g_2+3g_3$, $1\leq r\leq\frac{a}{3}$, be a lattice point as in (ii). Since it has minimal trace $2$, its only possible decomposition is as a sum of elements whose trace after multiplication by $\varphi_1+\varphi_3$ is equal to $1$, i.e. $$\alpha=(-v_1g_1-w_1g_2+(v_1+1)g_3)+(-v_2g_1-w_2g_2+(v_2+1)g_3).$$ First of all, let us discuss the case when $v_1<0$, and firstly assume $v_1\leq-2$. If $w_1>0$, then $$-v_1g_1-w_1g_2+(v_1+1)g_3<0,$$ since $a+1<\rho<a+1+\frac{2}{a}$ and $g_3\geq3$ for $a\geq 21$. Thus, in this case we must have $w_1\leq0$. Moreover, we can conclude that \begin{equation*}
    \begin{split}
        \mathrm{Tr}(-v_1g_1-w_1g_2+(v_1+1)g_3)&=-3v_1-aw_1+(v_1+1)\frac{a^2+3a+9}{3}\\&=\frac{a^2+3a}{3}v_1-aw_1+\frac{a^2+3a+9}{3}.
    \end{split}
\end{equation*} This trace is negative or zero if $$\frac{(a+3)(v_1+1)}{3}+\frac{3}{a}\leq w_1,$$ in which case our element is not totally positive. Thus, let us assume that $\frac{(a+3)(v_1+1)}{3}+\frac{3}{a}>w_1$. We have $$-v_1g_1-w_1g_2+(v_1+1)g_3=\frac{-2v_1+1}{3}-\Big(w_1-\frac{v_1+1}{3}\Big)\rho+\frac{v_1+1}{3}\rho^2.$$ Under our assumption on $v_1$ and $w_1$, we see that the coefficient before $\rho$ is positive, and the coefficient before $\rho^2$ is negative. Since $-1-\frac{1}{a}<\rho'<-1-\frac{1}{2a}$, we can conclude that \begin{equation*}
    \begin{split}
        \frac{-2v_1+1}{3}-\Big(w_1-\frac{v_1+1}{3}\Big)\rho'+\frac{v_1+1}{3}\rho'^2<\frac{-2v_1+1}{3}+\Big(w_1-\frac{v_1+1}{3}\Big)+\frac{v_1+1}{3}\\\leq\frac{-2v_1+1}{3}+\frac{(a+3)(v_1+1)}{3}+\frac{3}{a}-\frac{v_1+1}{3}+\frac{v_1+1}{3}=\frac{(a+1)(v_1+1)}{3}+1+\frac{3}{a}\leq0
    \end{split}
\end{equation*} for $a\geq21$. It implies that if $v_1\leq-2$, then the element of this form is never totally positive.

If $v_1=-1$, we obtain the element $1-w_1\rho$. If $w_1>0$, then $1-w_1\rho<0$, and if $w_1<0$, then $1-w_1\rho'<0$. We obtain a totally positive element only if $w_1=0$, i.e., the element $1$. However, we know that $\alpha$ is smaller than $1$ for some embedding, so $1$ cannot appear in its decomposition.

Therefore, we must have $v_1\geq0$, and this is also true for $v_2$. Without loss of generality, we thus must have $v_1=1$ and $v_2=0$. If $w_2<0$, we get $$-w_2\rho'+\frac{1+\rho'+\rho'^2}{3}=\frac{1}{3}-\Big(w_2-\frac{1}{3}\Big)\rho'+\frac{1}{3}\rho'^2<\frac{1}{3}+w_2-\frac{1}{3}+\frac{\Big(1+\frac{1}{a}\Big)^2}{3}=w_2+\frac{\Big(1+\frac{1}{a}\Big)^2}{3}<0$$ as, clearly, $\frac{\Big(1+\frac{1}{a}\Big)^2}{3}<1$ for $a\geq21$. Therefore, $w_2\geq0$.

Similarly, if $w_1<0$, we obtain $$-1+\frac{2}{3}-\Big(w_1-\frac{2}{3}\Big)\rho'+\frac{2}{3}\rho'^2<-1+\frac{2}{3}+w_1-\frac{2}{3}+\frac{2}{3}\Big(1+\frac{1}{a}\Big)^2<0.$$ Thus, we have $0\leq w_1,w_2\leq r+1\leq\frac{a}{3}+1$.

Now, we will use the remaining root $-\frac{1}{a+2}<\rho''<-\frac{1}{a+3}$. If $w_1=0$, we have $$-1+2\frac{1+\rho''+\rho''^2}{3}<-\frac{1}{3}-\frac{2}{3}\frac{1}{a+3}+\frac{2}{3}\frac{1}{(a+2)^2}<0$$ for $a\geq 21$. Similarly, if $w_1>0$, we obtain
\[
-1-w_1\rho''+2\frac{1+\rho''+\rho''^2}{3}=-\frac{1}{3}-\left(w_1-\frac{2}{3}\right)\rho''+\frac{2}{3}\rho''<-\frac{1}{3}+\left(w_1-\frac{2}{3}\right)\frac{1}{a+2}+\frac{2}{3}\frac{1}{(a+2)^2}
\]
The expression on the right side is positive only if $$w_1>\frac{a}{3}+\frac{4}{3}-\frac{2}{3(a+2)}>\frac{a}{3}+1$$ for $a\geq21$. This is impossible as we must have $w_1\leq\frac{a}{3}+1$. Therefore, $\alpha$ is indecomposable.
\end{proof}

\subsubsection{Transformations of lattice points}\label{sect:t1t2}

For later use, we consider the transformations $T_1$ and $T_2$ introduced in \cite[Section 5.1]{kalatinkova} and extend them to all elements of $K$. Namely, for $\alpha\in K$, we define $$T_1(\alpha)=\alpha'(-1-a-(a^2+3a+3)\rho+(a+2)\rho^2),\quad T_2(\alpha)=\alpha''\rho^2.$$

The choice $s=0$ corresponds to the ones studied in \cite{kalatinkova}. Our points $\alpha_0(v,r)$ are represented therein as $\alpha(v,W)$ with $W=r-1$. With our notation, the definitions of $T_1$ and $T_2$ in $\blacktriangle$ are:
\begin{equation*}
    \begin{split}
        &T_1(\alpha_0(v,r))=(\alpha_0(v,r))'(-1-a-(a^2+3a+3)\rho+(a+2)\rho^2)=\alpha_0(r-1,a+2-v-r),\\&T_2(\alpha_0(v,r))=(\alpha_0(v,r))''\rho^2=\alpha_0(a+1-v-r,v+1).
    \end{split}
\end{equation*}

Let us study the behaviour of transformations $T_1$ and $T_2$ over lattice points with $s\neq0$. It turns out that their behaviour over these points is far more unpredictable than in the case $s=0$. For the sake of simplicity, we will just work with the transformation $T_1$.

Let us assume that $s=1$ and let us identify a lattice point $\alpha_1(v,r)$ with a point $(v,r)\in\mathbb{Z}^2$. Using some mathematical software, we can easily check the following:
\begin{itemize}
    \item If $(v,r)\in\bigcup_{i=1}^3R_i$, $$T_1(v,r)=\begin{cases}
    \Big(\frac{2a}{3}+1,0\Big) & \hbox{if }(v,r)=(0,0),\\
    \Big(r+\frac{2a}{3},a+2-v-r\Big) & \hbox{otherwise}.
    \end{cases}
    $$
    \item If $(v,r)\in R_4$, $$T_1(v,r)=\Big(r-\frac{a}{3}-1,a+1-v-r\Big).$$
    \item If $(v,r)\in\bigcup_{i=5}^9R_i$, $$T_1(v,r)=\begin{cases}
    \Big(r+\frac{2a}{3},a+2-v-r\Big) & \hbox{if }v+r\leq a+2, \\
    \Big(r+\frac{2a}{3}-1,2a+4-v-r\Big) & \hbox{if }v+r>a+2.
    \end{cases}$$
    \item If $(v,r)\in\bigcup_{i=10}^{14}R_i$, $$T_1(v,r)=\begin{cases}
    \Big(r-\frac{a}{3}-1,a+1-v-r\Big) & \hbox{if }v+r\leq a+1, \\
    \Big(r-\frac{a}{3}-2,2a+3-v-r\Big) & \hbox{if }v+r>a+1.
    \end{cases}$$
    \item If $(v,r)\in R_{15}$, $$T_1(v,r)=\Big(r-\frac{a}{3}-2,2a+3-v-r\Big).$$
\end{itemize}

We see that at each case $T_1$ behaves as a composition of symmetries and translations. In the following, we list the images of some regions $R_i$ by $T_1$.

\begin{lema}\label{lemat1s=1} Under the previous notations, we have:
\begin{itemize}
    \item $T_1(R_1\cup R_2)=R_{14}$,
    \item $T_1(R_3-\{(0,0)\})=\{v=\frac{2a}{3},\,\frac{a}{3}+1\leq r\leq a+1\}\subset R_{12}\cup R_{13}$,
    \item $T_1(0,0)=(\frac{2a}{3}+1,0)$,
    \item $T_1(R_{10}\cup R_{11})=R_1\cup R_2$,
    \item $T_1(R_{12})=R_3-\{(\frac{2a}{3}+1,0)\}$,
    \item $T_1(R_5)=R_8$,
    \item $T_1(R_9\cup R_{15})=R_5$.
\end{itemize}
\end{lema}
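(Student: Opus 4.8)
The statement of Lemma \ref{lemat1s=1} is a list of seven set-theoretic equalities (or near-equalities, with one or two exceptional points removed) for the images under $T_1$ of certain unions of the regions $R_i$. The plan is to prove each item by a direct computation, using the case-by-case description of $T_1$ on lattice points with $s=1$ that was tabulated immediately before the statement, together with the explicit defining inequalities for the regions $R_1,\dots,R_{15}$ recorded in Table \ref{tab:points1}. The key observation that makes this tractable is that on each relevant block $T_1$ acts as an \emph{affine} map of the plane of parameters $(v,r)$, indeed a composition of a reflection and a translation; such a map sends a region defined by linear inequalities to another region defined by linear inequalities, so each item reduces to checking that the image of one explicit polygon (with integer/$\frac{a}{3}$-scaled vertices) coincides with another explicit polygon, which in turn is just a matter of transforming the bounding inequalities and matching them against Table \ref{tab:points1}.

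\textbf{Order of the steps.} First I would fix, once and for all, the four affine maps that occur, namely $(v,r)\mapsto(r+\tfrac{2a}{3},a+2-v-r)$, $(v,r)\mapsto(r+\tfrac{2a}{3}-1,2a+4-v-r)$, $(v,r)\mapsto(r-\tfrac{a}{3}-1,a+1-v-r)$ and $(v,r)\mapsto(r-\tfrac{a}{3}-2,2a+3-v-r)$, and record for each its effect on a generic linear inequality $\alpha v+\beta r\leq \gamma$ (this is pure bookkeeping). Next, for each item of the lemma I would: (1) read off from Table \ref{tab:points1} the inequalities cutting out the source region(s) and, where a union is involved, check that the pieces fit together into a single convex polygon (or handle the two congruent-mod-$a/3$ subcases of $v+r$ separately, as the $T_1$-formula dictates); (2) apply the relevant affine map to these inequalities; (3) simplify and compare with the Table \ref{tab:points1} description of the claimed target region, paying attention to the removed exceptional points (e.g.\ $(0,0)$ in $R_3$, $(\tfrac{2a}{3}+1,0)$ in $R_3$), which correspond exactly to the boundary-collapse special values appearing in the $T_1$-case analysis. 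I would present the argument by grouping items with a common underlying map — for instance $T_1(R_1\cup R_2)=R_{14}$ and $T_1(R_{10}\cup R_{11})=R_1\cup R_2$ use the "$+\tfrac{2a}{3}$" and "$-\tfrac{a}{3}-1$" maps respectively — so that the inequality-transformation computation is done once per map and then merely specialized.

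\textbf{The main obstacle.} The genuine difficulty is not any single equality but the \emph{boundary matching}: the regions in Table \ref{tab:points1} are broken up in an ad hoc way (some by whether $v\le\frac{a}{3}$, $\frac{2a}{3}$, etc., some by half-integrality of $\frac{a}{3}$), and the $T_1$-formulas themselves split according to whether $v+r\le a+2$ (or $a+1$). Making sure that these two incompatible subdivisions are reconciled — i.e.\ that the image of a source polygon really is \emph{exactly} the target polygon and not off by a lattice layer along some edge, and that every excluded exceptional point is accounted for — is where care is needed; in particular one must verify that $\frac{a}{3}$ is an integer here (which follows from $a\equiv 3$ or $21\pmod{27}$), so that all the stated bounds are honest integers and the polygons contain exactly the lattice points claimed. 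I expect that a clean way to organize this is to note that $T_1$ is (up to the $s$-shift bookkeeping already done in Section \ref{sect:t1t2}) an involution-like bijection on the full set of lattice points of the second parallelepiped, so that it suffices to check containments in one direction for each item and invoke the fact that $T_1$ is injective on lattice points to upgrade them to equalities; this removes roughly half of the inequality-chasing.
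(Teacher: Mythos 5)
Your proposal matches the paper's proof: the authors verify each item by applying the explicit affine formula for $T_1$ on the relevant block, transforming the defining inequalities of the source region, and matching the result against the Table \ref{tab:points1} description of the target, doing the case $T_1(R_1\cup R_2)=R_{14}$ in full and noting the rest are analogous. One small caveat: your proposed shortcut of proving only one containment and ``upgrading'' via injectivity of $T_1$ does not by itself yield equality (injectivity gives $|T_1(A)|=|A|$, not $T_1(A)=B$, unless you separately count $|A|=|B|$ or use that the regions partition the full lattice-point set); the paper instead gets equality for free because each inequality transformation is an equivalence, i.e.\ ``all these steps are reversible,'' which your main computation already provides.
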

\begin{proof}
We may check directly each inequality. Let us do it explicitly for the first one. Given $(v,r)\in R_1\cup R_2$, we know that $T_1(v,r)=(v',r')$, where $$\begin{cases}v'=r+\frac{2a}{3},\\ r'=a+2-v-r.\end{cases}$$ If $(v,r)\in R_1$, we have that $0\leq v\leq\frac{a}{3}$ and $1\leq r\leq\frac{a}{3}$. We see directly that $\frac{2a}{3}+1\leq v'\leq a$. On the other hand, $\frac{2a}{3}+2-r\leq r'\leq a+2-r$. Now, $\frac{2a}{3}+2-r=\frac{4a}{3}+2-v'$ and $a+2-r=\frac{5a}{3}-v'+2$, so $\frac{4a}{3}-v'+2\leq r'\leq\frac{5a}{3}-v'+2$. If $(v,r)\in R_2$, we have that $\frac{a}{3}+1\leq v\leq\frac{2a}{3}-1$ and $1\leq r\leq \frac{2a}{3}-v$. It is direct that $\frac{2a}{3}+1\leq v'\leq\frac{4a}{3}-v+1\leq a$. On the other hand, since $v+r\leq\frac{2a}{3}$, we have that $r'\geq\frac{2a}{3}+2$. Finally, $v'+r'=\frac{5a}{3}+2-v\leq\frac{4a}{3}+1$, so $r'\leq\frac{4a}{3}-v'+1$. Joining the two cases, we see that $\frac{2a}{3}+1\leq v'\leq a$ and $\frac{2a}{3}+2\leq r'\leq\frac{5a}{3}-v'+2$. Since all these steps are reversible, we conclude that $T_1(R_1\cup R_2)=R_{14}$.
\end{proof}

Now, let us assume that $s=2$ and study the behaviour of $T_1$ over the points $\alpha_2(v,r)$. As in the previous case, we identify a lattice point $\alpha_2(v,r)$ with a point $(v,r)\in\mathbb{Z}^2$.

\begin{itemize}
    \item If $(v,r)\in S_1\cup S_2$, $$T_1(v,r)=\Big(r+\frac{a}{3},a+1-r-v\Big).$$
    \item If $(v,r)\in S_3\cup S_4\cup S_5\cup S_6\cup S_7\cup S_8$, $$T_1(v,r)=\begin{cases}
    \Big(r+\frac{a}{3},a+1-v-r\Big) & \hbox{if }v+r\leq a+1, \\
    \Big(r+\frac{a}{3}-1,2a+3-v-r\Big) & \hbox{if }v+r>a+1.
    \end{cases}$$
    \item If $(v,r)\in S_9\cup S_{10}\cup S_{11}\cup S_{12}$, $$T_1(v,r)=\begin{cases}
    \Big(r-\frac{2a}{3}-1,a-r-v\Big) & \hbox{if }v+r\leq a, \\
    \Big(r-\frac{2a}{3}-2,2a-r-v+2\Big) & \hbox{if }v+r> a.
    \end{cases}$$
    \item If $(v,r)\in S_{13}$, $$T_1(v,r)=(r+\frac{a}{3}-1,2a+3-v-r).$$
    \item If $(v,r)\in S_{14}\cup S_{15}$, $$T_1(v,r)=\Big(r-\frac{2a}{3}-2,2a+2-v-r\Big).$$
\end{itemize}

Again, we can use these expressions to identify the images of regions $S_i$ by $T_1$.

\begin{lema}\label{lemat1s=2} Let us identify a lattice point $\alpha_2(v,r)$ with a point $(v,r)\in\mathbb{Z}^2$. We have:
\begin{itemize}
    \item $T_1(S_1\cup S_2)=S_{12}$,
    \item $T_1(S_3)=S_6\cup S_7$,
    \item $T_1(S_6\cup S_7)=S_{14}\cup S_{15}$,
    \item $T_1(S_{12})=S_9$.
\end{itemize}
\end{lema}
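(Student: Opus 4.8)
The plan is to argue exactly as in the proof of Lemma~\ref{lemat1s=1}: for each of the four claimed identities we substitute the defining inequalities of the source region into the appropriate affine formula for $T_1$ recorded just before the statement, read off the inequalities satisfied by the image point, recognise these as the description of the target region (or union of regions), and finally observe that every step is reversible, which promotes the resulting inclusion to an equality.

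What makes this manageable is that on each source region occurring here the piecewise formula for $T_1$ collapses to a single affine branch. On $S_1\cup S_2$ the displayed formula $(v,r)\mapsto(r+\tfrac{a}{3},\,a+1-v-r)$ is already uniform. On $S_3$ one has $v+r\le v+\tfrac{2a}{3}\le a-1<a+1$, so only the branch $(v,r)\mapsto(r+\tfrac{a}{3},\,a+1-v-r)$ occurs. On $S_6\cup S_7$ one has $v+r\ge a+2>a+1$, so only the branch $(v,r)\mapsto(r+\tfrac{a}{3}-1,\,2a+3-v-r)$ occurs. On $S_{12}$ one has $v+r\ge\tfrac{a}{3}+\tfrac{2a}{3}+2=a+2>a$, so only the branch $(v,r)\mapsto(r-\tfrac{2a}{3}-2,\,2a-v-r+2)$ occurs. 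In each case $T_1$ restricts to an affine self-map of $\mathbb{Z}^2$ whose linear part sends $(v,r)$ to $(r,\,-v-r)$, hence has determinant $1$ and is invertible over $\mathbb{Z}$; all the threshold quantities $\tfrac{a}{3},\tfrac{2a}{3},\tfrac{4a}{3},\tfrac{5a}{3}$ are genuine integers because $a\equiv3,21\pmod{27}$.

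With this preparation each identity is a bounded computation. For $T_1(S_1\cup S_2)=S_{12}$ and $T_1(S_{12})=S_9$ the source and target are single (triangular) regions, and one simply carries their defining inequalities through the affine map and back through its inverse. For $T_1(S_3)=S_6\cup S_7$ and $T_1(S_6\cup S_7)=S_{14}\cup S_{15}$ one extra point needs attention: the first coordinate $v'$ of the image runs over an interval straddling $\tfrac{2a}{3}$, respectively $\tfrac{2a}{3}+1$, which is precisely the value at which $S_6$ borders $S_7$, respectively $S_{14}$ borders $S_{15}$. One therefore checks that the portion of the image with $v'\le\tfrac{2a}{3}$ satisfies the inequalities defining $S_6$ (resp.\ $S_{14}$) while the portion with larger $v'$ satisfies those of $S_7$ (resp.\ $S_{15}$); in the last identity, where the source is itself the union $S_6\cup S_7$, one must also verify that the two images glue along the line $v'=\tfrac{2a}{3}$.

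I expect the only real obstacle to be this boundary bookkeeping together with the reversibility check: each inclusion $T_1(\text{source})\subseteq\text{target}$ and its converse is a short chain of inequality manipulations of the kind carried out explicitly for $R_1\cup R_2$ in the proof of Lemma~\ref{lemat1s=1}, but there are several of them and one must keep careful track of which defining inequality of the target corresponds to which constraint on $(v,r)$. Since on each source region $T_1$ agrees with a fixed affine bijection of $\mathbb{Z}^2$, the converse inclusion is obtained simply by applying the inverse map to the target's inequalities; no idea beyond the explicit formulas for $T_1$ listed above is required.
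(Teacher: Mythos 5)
Your proposal is correct and follows essentially the same route as the paper, which states Lemma \ref{lemat1s=2} without further argument immediately after the piecewise formulas and, for the analogous Lemma \ref{lemat1s=1}, simply says the identities ``may be checked directly'' and carries out one case; your additional observations (each source region lies in a single affine branch of $T_1$, the linear part has determinant $1$, and the thresholds $\frac{a}{3},\frac{2a}{3},\frac{4a}{3},\frac{5a}{3}$ are integers since $3\mid a$) are all accurate. The only slight imprecision is in the last identity: $T_1(S_6)$ lands entirely inside $S_{15}$ while $T_1(S_7)$ covers $S_{14}$ together with the remainder of $S_{15}$, so the two images glue along a diagonal line inside $S_{15}$ rather than along $v'=\frac{2a}{3}$, but this does not affect the validity of the method.
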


\subsubsection{Finding decompositions of lattice points}\label{sec:decspoints}

We have proved that the elements in Theorem \ref{teolistindec} are indecomposable, and we have given their minimal traces. Now, we prove that all the other lattice points in the second parallelepiped are decomposable. Again, we have to distinguish different cases depending on the value of $s$.

Let us assume that $s=0$. We already know that the points from $P_3$ and $P_4$ are decomposable (because they are indecomposables in $\mathbb{Z}[\rho]$, see \cite[Lemma 4.2]{kalatinkova}). Thus, we are left with the regions $P_1$ and $P_2$. Their union correspond to the triangle $\blacktriangle$ (note that since the points in situation (d) are discarded, we can denote $v=x$).

Recall the definition of $T_1$ and $T_2$ in Section \ref{sect:t1t2}. Since the factors multiplying the conjugate of $\alpha_0(v,r)$ at each line are totally positive units, we have that for each $i\in\{1,2\}$, $\alpha_0(v,r)$ is indecomposable if and only if so is $T_i(\alpha_0(v,r))$. Moreover, for each $\alpha\in\blacktriangle$, there is $i\in\{1,2\}$ such that $T_i(\alpha)$ lies in the region $$\blacktriangle_0(a)=\Big\{\alpha_0(v,r)\,|\,0\leq v\leq\frac{a}{3}-1,\,v+1\leq r\leq a-2v\Big\}\cup\Big\{\alpha_0\Big(\frac{a}{3},\frac{a}{3}+1\Big)\Big\}.$$ Therefore, it is enough to study the decomposability of the lattice points in $\blacktriangle_0(a)$.

\begin{pro}\label{pro:decs0} The elements with $s=0$ other than the ones in Theorem \ref{teolistindec} (ii), (iii) and (iv) are decomposable.
\end{pro}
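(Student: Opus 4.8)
By the reduction just established, it is enough to prove that every lattice point of $\blacktriangle_0(a)$ other than the points $\alpha_0(0,r)$, $1\le r\le\tfrac a3$, is decomposable (these are the points of Theorem~\ref{teolistindec}(ii); together with their images (iii) and (iv) under $T_1$ they are the indecomposables excluded in the statement, and (iii), (iv) do not lie in $\blacktriangle_0(a)$, see Proposition~\ref{proindecs0}). The main part of $\blacktriangle_0(a)$ is $\{\alpha_0(v,r):0\le v\le\tfrac a3-1,\ v+1\le r\le a-2v\}$, plus the single extra point $\alpha_0(\tfrac a3,\tfrac a3+1)$. For each fixed $v$ I would partition the $r$-range into the three intervals $[\,v+1,\tfrac a3\,]$, $[\,\tfrac a3+1,\tfrac{2a}3-v\,]$, $[\,\tfrac{2a}3-v+1,a-2v\,]$, give one decomposition formula per interval, and fold $\alpha_0(\tfrac a3,\tfrac a3+1)$ into the first interval (as the value ``$r=\tfrac a3+1$'').

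Two of the three cases are immediate, because both summands lie on the list of indecomposables and hence are totally positive. For $\tfrac a3+1\le r\le\tfrac{2a}3-v$ the element $\alpha_0(v,r)-g_3$ is exactly the point of Theorem~\ref{teolistindec}(v) with parameters $(v,r)$, so $\alpha_0(v,r)=g_3+\bigl(\alpha_0(v,r)-g_3\bigr)$ is the desired decomposition. For $\tfrac{2a}3-v+1\le r\le a-2v$ I would verify the identity
$$\alpha_0(v,r)=\bigl(\text{point (v), parameters }(v,\tfrac{2a}3-v)\bigr)+\bigl(\text{point (vi), parameter }r+v-\tfrac{2a}3-1\bigr),$$
together with the admissibility of these parameter choices. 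The remaining, genuinely harder, case is $v+1\le r\le\tfrac a3$ (and the extra point), where neither $\alpha_0(v,r)-g_3$ nor $\alpha_0(v,r)$ minus the point of (v) with parameters $(v,r)$ is totally positive. Here I would instead subtract from $\alpha_0(v,r)$ the point $\beta$ of (v) with parameters $(v-1,\tfrac a3+1)$; the difference collapses to
$$\gamma:=\alpha_0(v,r)-\beta=-2g_1-\bigl(r+\tfrac{2a}3+2\bigr)g_2+4g_3=-1-\bigl(r+\tfrac{2a}3+1\bigr)\rho+\rho^2+g_3,$$
which is \emph{not} on the list, so one must establish $\gamma\succ0$ by hand. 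Once this is done, the three formulas exhibit every lattice point of $\blacktriangle_0(a)$ outside (ii) as a sum of two totally positive integers, which by the reduction completes the proof.

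The main obstacle is thus the positivity check $\gamma\succ0$. Writing $\gamma=-\tfrac23-\bigl(r+\tfrac{2a}3+\tfrac23\bigr)\rho+\tfrac43\rho^2$ in the power basis, I would bound its three real embeddings using $a+1<\rho<a+1+\tfrac2a$, $-1-\tfrac1a<\rho'<-1-\tfrac1{2a}$, $-\tfrac1{a+2}<\rho''<-\tfrac1{a+3}$ and $a\ge21$ (which follows from $a>12$ and $a\equiv3,21\pmod{27}$): at $\rho$ the $\rho^2$-term dominates, so positivity is clear; at $\rho'$ one gets more than $r+\tfrac{2a}3+\tfrac43>0$; the tight embedding is at $\rho''$, where $-\tfrac23$ must be outweighed by $\bigl(r+\tfrac{2a}3+\tfrac23\bigr)|\rho''|$, and one checks $\bigl(r+\tfrac{2a}3+\tfrac23\bigr)\tfrac1{a+3}>\tfrac23$ using $r\ge v+1\ge2$. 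This estimate, and the parameter-admissibility checks in the second case, are routine but need to be carried out carefully for each family.
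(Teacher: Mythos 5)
Your proposal is correct and follows the same overall strategy as the paper: reduce to $\blacktriangle_0(a)$, split the $r$-range into the three intervals $[v+1,\tfrac a3]$, $[\tfrac a3+1,\tfrac{2a}3-v]$, $[\tfrac{2a}3-v+1,a-2v]$, and exhibit a two-term decomposition in each. The middle interval is handled identically ($\alpha=g_3+(\alpha-g_3)$ with $\alpha-g_3$ of type (v)). In the outer two intervals your summands differ from the paper's: for $\tfrac{2a}3-v+1\le r\le a-2v$ you use (v) with $(v,\tfrac{2a}3-v)$ plus (vi) with $r+v-\tfrac{2a}3-1$ (the identity and the admissibility ranges do check out), whereas the paper uses $\alpha_1(0,\tfrac{2a}3)\in R_4$ plus a lattice point in $S_1\cup S_2\cup S_{10}\cup S_{11}$; for $v+1\le r\le\tfrac a3$ you use $\beta=\alpha_1(v-1,\tfrac a3+1)$ plus your element $\gamma$, whereas the paper uses $\alpha_1(0,\tfrac a3+1)$ plus $\alpha_2(v-1,r+\tfrac{2a}3)$ --- the two decompositions just distribute the shift $v-1$ onto the other summand. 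The only real extra work in your route is the hand estimate $\gamma\succ0$; this can be avoided by observing that $\gamma=-(r+\tfrac{2a}3)\rho+2\rho^2-2g_3=\alpha_2\bigl(0,r+\tfrac{2a}3\bigr)$ is itself one of the classified lattice points (it lies in $S_9$ for $2\le r\le\tfrac a3$ and in $S_{10}$ for the extra point $r=\tfrac a3+1$), hence is totally positive by the parallelepiped classification. Your embedding estimates for $\gamma$ are nevertheless sound (the binding constraint at $\rho''$ reduces to $r\ge2$, which holds since $r\ge v+1\ge2$ in this case), so either justification completes the argument.
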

\begin{proof}
We know that it is enough to look at the points in $\blacktriangle_0(a)$. Thus, let us consider $\alpha\in\blacktriangle_0(a)$, i.e. we have \begin{equation}\label{expralpha}
    \begin{split}
        \alpha=-v-(v(a+2)+r)\rho+(v+1)\rho^2,
    \end{split}
\end{equation} where $0\leq v\leq\frac{a}{3}-1$ and $v+1\leq r\leq a-2v$, or $\alpha=\alpha_0(\frac{a}{3},\frac{a}{3}+1)$. Among these, the points that are known to be indecomposable (from Proposition \ref{proindecs0}) are the ones in Theorem \ref{teolistindec} (ii), i.e. for $v=0$ and $1\leq r\leq \frac{a}{3}$.

Let us assume that $0\leq v\leq\frac{a}{3}-1$ and $\frac{a}{3}+1\leq r\leq\frac{2a}{3}-v$. Then $\alpha-g_3$ is one of the points in Theorem \ref{teolistindec} (v), so $\alpha$ is decomposable.

If $v=0$ and $\frac{2a}{3}+1\leq r\leq a$, we have that \begin{equation*}
    \begin{split}
        \alpha&=\Big[-\frac{2a}{3}\rho+\rho^2-g_3\Big]\\&+\Big[1-\Big(r-\frac{2a}{3}-1\Big)\rho+\rho^2-2g_3\Big].
    \end{split}
\end{equation*} The first summand is $\alpha_1(0,\frac{2a}{3})\in R_4$. On the other hand, $0\leq r'\leq\frac{a}{3}-1$ for $r'=r-\frac{2a}{3}-1$, so the second summand is the point $\alpha_2(0,r')$ that belongs to $S_1$ if $1\leq r'\leq\frac{a}{3}-1$ and to $S_2$ if $r'=0$. Therefore, $\alpha$ is decomposable for this case.

Assume that $1\leq v\leq\frac{a}{3}-1$ and $v+1\leq r\leq\frac{a}{3}$. We have \begin{equation*}
    \begin{split}
        \alpha&=\Big[-\Big(\frac{a}{3}+1\Big)\rho+\rho^2-g_3\Big]\\&+\Big[-(v-1)-(v(a+2)+r-\frac{a}{3}-2)\rho+(v+1)\rho^2-2g_3\Big].
    \end{split}
\end{equation*} The first summand is $\alpha_1(0,\frac{a}{3}+1)\in R_4$. The second one can be rewritten as $$-(v-1)-\Big((v-1)(a+2)+r+\frac{2a}{3}\Big)\rho+(v+1)\rho^2-2g_3.$$ For $v'=v-1$ and $r'=r+\frac{2a}{3}$, this is of the form $$-v'-(v'(a+2)+r')\rho+(v'+2)\rho^2-2g_3,$$ with $0\leq v'\leq\frac{a}{3}-2$.
\begin{itemize}
    \item If $r\leq\frac{a}{3}-v+1$, then $\frac{2a}{3}+1+v\leq r'\leq a-v+1=a-v'$ and the point above is $\alpha_2(v',r')\in S_9$.
    \item If $r=\frac{a}{3}-v+2$, then $r'=a-v'+1$ and we obtain $\alpha_2(v',r')\in S_{10}$.
    \item In the remaining cases, $a-v'+2\leq r'\leq a$ and we obtain $\alpha_2(v',r')\in S_{11}$.
\end{itemize}
Now, consider the same decomposition for $\alpha=\alpha_0(\frac{a}{3},\frac{a}{3}+1)$, i.e. $v=\frac{a}{3}$ and $r=\frac{a}{3}+1$. Then $v'=\frac{a}{3}-1$ and $r'=a+1$, so that $\alpha_2(v',r')\in S_{11}$.

Assume that $1\leq v\leq\frac{a}{3}-1$ and $\frac{2a}{3}-v+1\leq r\leq a-2v$. We have \begin{equation*}
    \begin{split}
        \alpha&=\Big[-\frac{2a}{3}\rho+\rho^2-g_3\Big]\\&+\Big[-(v-1)-\Big(v(a+2)+r-\frac{2a}{3}-1\Big)\rho+(v+1)\rho^2-2g_3\Big].
    \end{split}
\end{equation*} We already know that the first summand is totally positive. Let us assume that $\frac{2a}{3}-v+1\leq r\leq\frac{2a}{3}$ (for simplicity, even when $a-2v\leq\frac{2a}{3}$, which happens if and only if $v\geq\lceil\frac{a}{6}\rceil$ and in which case $\alpha$ is as in \eqref{expralpha}). We rewrite the second summand as $$-(v-1)-\Big((v-1)(a+2)+r+\frac{a}{3}+1\Big)\rho+(v+1)\rho^2-2g_3,$$ which is of the form $$-v'-(v'(a+2)+r')\rho+(v'+2)\rho^2-2g_3$$ for $v'=v-1$ and $r'=r+\frac{a}{3}+1$, so that $a-v'+1\leq r'\leq a+1$. If $v=1$, then $v'=0$ and $r'=a+1$, and we obtain the lattice point $\alpha_2(0,a+1)\in S_{10}$.
Otherwise, if $2\leq v\leq\frac{a}{3}-1$, then $1\leq v'\leq\frac{a}{3}-2$, and we obtain a lattice point $\alpha_2(v',r')$ in $S_{10}$ if $r'=a-v'+1$ and in $S_{11}$ if $a-v'+2\leq r'\leq a+1$. 
Finally, let us assume that $\frac{2a}{3}+1\leq r\leq a-2v$, which in particular implies that $1\leq v\leq\lfloor\frac{a-3}{6}\rfloor<\frac{a}{3}-1$. Let $r'=r-\frac{2a}{3}-1$. If $r=\frac{2a}{3}+1$, then $r'=0$ and the second summand above equals $\alpha_2(v,0)\in S_2$.
Otherwise, if $\frac{2a}{3}+2\leq r\leq a-2v$, then $1\leq r'\leq\frac{a}{3}-2v-1$, and we obtain $\alpha_2(v,r')\in S_1$.

We have seen that all points in $\blacktriangle_0(a)$ other than the ones in Theorem \ref{teolistindec} (ii) can be written as a sum of totally positive elements, so they are decomposable.
\end{proof}

In the cases with $s\neq0$, $T_1$ also preserves decompositions in both directions. Hence, we can use it to reduce the list of lattice points for which we have to find a decomposition.

\begin{pro}\label{pro:decs1} The elements with $s=1$ other than the ones in Theorem \ref{teolistindec} (v) are decomposable.
\end{pro}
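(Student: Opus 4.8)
The plan is to run the same machinery as in the proof of Proposition~\ref{pro:decs0}: use that the transformation $T_1$ of Section~\ref{sect:t1t2} preserves decomposability in both directions, contract the list of regions that actually need to be treated by hand by means of Lemma~\ref{lemat1s=1}, and then exhibit an explicit decomposition for each surviving region.

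First I would record that $T_1(\alpha)=\alpha'\cdot(-1-a-(a^2+3a+3)\rho+(a+2)\rho^2)$ is the composition of a Galois automorphism of $K$ with multiplication by a totally positive unit; hence $T_1$ is additive and restricts to a bijection of $\mathcal{O}_K^+$, so $\alpha$ is decomposable if and only if $T_1(\alpha)$ is. Feeding in Lemma~\ref{lemat1s=1}, the decomposability of $R_{10}\cup R_{11}$ and of $R_{14}$ reduces to that of $R_1\cup R_2$; the decomposability of $R_8$, $R_9$ and $R_{15}$ reduces to that of $R_5$; and, since $T_1$ carries $R_{12}$ bijectively onto $R_3\setminus\{(\tfrac{2a}{3}+1,0)\}$ while sending the remaining point $(0,0)\in R_3$ to $(\tfrac{2a}{3}+1,0)$, the decomposability of $R_{12}$ together with the slice $\{v=\tfrac{2a}{3}\}$ of $R_{12}\cup R_{13}$ reduces to that of $R_3$. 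Thus it suffices to prove that every point of $R_1\cup R_2$, $R_3$, $R_5$, $R_6$, $R_7$ and $R_{13}$ (and the finitely many boundary points of the various $R_i$) is a sum of two totally positive integers — note that none of these regions is $R_4$, which is exactly the list in Theorem~\ref{teolistindec}(v).

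Next, for each such region I would produce a concrete splitting $\alpha_1(v,r)=\beta+\gamma$. As in Proposition~\ref{pro:decs0}, the natural choice is to subtract from $\alpha_1(v,r)$ a fixed small totally positive element — one of the points of $R_4$ such as $-\tfrac{a}{3}\rho+\rho^2-g_3$ or $-\tfrac{2a}{3}\rho+\rho^2-g_3$, or $-g_2+g_3$, $-g_2+2g_3$, or a point of some $S_i$ — and to check that the remainder is again totally positive and has parameters lying in one of the admissible ranges, so that it is a genuine lattice point (hence an algebraic integer) of the expected type; here one uses $3g_3=1+\rho+\rho^2\in\mathbb{Z}[\rho]$ to keep track of $g_3$-coefficients. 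Total positivity at the three embeddings is verified by writing everything in the power basis $\{1,\rho,\rho^2\}$ and invoking the inequalities $a+1<\rho<a+1+\tfrac2a$, $-1-\tfrac1a<\rho'<-1-\tfrac1{2a}$, $-\tfrac1{a+2}<\rho''<-\tfrac1{a+3}$ together with $g_3\geq 3$, all valid for $a\geq 21$.

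The hard part will be the positivity bookkeeping at the conjugate embeddings $\rho'$ and $\rho''$: because the $\rho^2$-coefficients of the summands are $O(1)$, the quantities one estimates are small and the sharp forms of the bounds above are needed, and several case distinctions arise near the boundaries — the corners such as $(0,0)$, the diagonals $v+r=a+1$ and $v+r=a+2$ that separate the two branches of $T_1$, and the slices where a region collapses to a single $v$ or $r$. The regions $R_6$, $R_7$ and $R_{13}$, which are not reached by $T_1$ from the contracted list, must be split directly in the same style, and I expect these to be the most laborious part of the argument. Once every lattice point outside $R_4$ has been written as a sum of two totally positive integers, the proposition follows.
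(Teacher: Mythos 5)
Your reduction step is correct and coincides with the paper's: $T_1$ is a Galois conjugation followed by multiplication by a totally positive unit, hence preserves decomposability in both directions, and Lemma~\ref{lemat1s=1} contracts the problem to $R_1\cup R_2$, $R_3$, $R_5$, $R_6$, $R_7$ and (the relevant part of) $R_{13}$ — exactly the list the paper works with. However, everything after that is a plan rather than a proof: you say you \emph{would} subtract a suitable small totally positive element and \emph{would} check that the remainder is a lattice point of the expected type, but you never exhibit the decompositions, and these are the entire content of the proposition. The choices are not automatic. For instance, for the points $-(v-1)-(v(a+2)+r)\rho+(v+1)\rho^2-g_3$ in $R_1\cup R_2\cup R_3$ the paper needs \emph{two} different splittings, $\alpha=\alpha_2(0,0)+P$ for $v=0$ and $\alpha=\alpha_1(0,\tfrac{2a}{3})+Q$ for $v\geq 1$, because neither remainder is totally positive on the whole region; one then has to verify case by case that $P$ lands in $S_1\cup S_2$ and $Q$ lands in $\blacktriangle$, with the index shifts $v'=v-1$, $r'=r+\tfrac{a}{3}+2$ falling inside the admissible ranges. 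Without carrying out this bookkeeping for each region (including $R_6$, $R_7$ and $R_{13}$, which you correctly flag as not reachable by $T_1$), the claim that every such point decomposes is unverified.

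A secondary point: you propose to certify total positivity of the remainders by direct estimates at the three embeddings. That is unnecessary and would make the argument much heavier. Every lattice point of the two parallelepipeds is a $[0,1]$-combination of totally positive elements, so once a remainder is identified as some $\alpha_0(v',r')\in\blacktriangle$ or $\alpha_s(v',r')\in S_i$ with parameters in the admissible range, it is automatically totally positive; this range check is what the paper does and is the only verification needed. Your write-up hovers between the two methods without completing either, so the gap is not a stylistic one: the explicit splittings and the range verifications are missing.
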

\begin{proof}
Note that the points in Theorem \ref{teolistindec} (v) are just the ones in $R_4$. By Lemma \ref{lemat1s=1}, it is enough to prove the decomposability of points in $R_1$, $R_2$, $R_3$, $R_5$, $R_6$, $R_7$ and $R_{13}-\{v=\frac{2a}{3},\,\frac{a}{3}+2\leq r\leq a+1\}$. 

We first consider the points of the form $$\alpha=-(v-1)-(v(a+2)+r)\rho+(v+1)\rho^2-g_3,$$ which correspond to the regions $R_1$, $R_2$ and $R_3$. We have the decompositions \begin{equation*}
    \begin{split}
        \alpha&=[1+\rho^2-2g_3]+[-(v-1)-(v(a+2)+r-1)\rho+(v+1)\rho^2-2g_3]=\alpha_2(0,0)+P,\\\alpha&=\Big[-\frac{2a}{3}\rho+\rho^2-g_3\Big]+\Big[-(v-1)-\Big((v-1)(a+2)+r+\frac{a}{3}+2\Big)\rho+v\rho^2\Big]\\&=\alpha_1\Big(0,\frac{2a}{3}\Big)+Q.
    \end{split}
\end{equation*} We will prove that either $P$ or $Q$ are totally positive depending on the values of $v$ and $r$. Let us suppose that $\alpha\in R_1\cup R_2$, so $0\leq v\leq\frac{2a}{3}-1$ and $1\leq r\leq\mathrm{min}(\frac{a}{3},\frac{2a}{3}-v)$. If $v=0$ and we call $r'=r-1$, then $P=1-r'\rho+\rho^2-2g_3$ with $0\leq r'\leq\frac{a}{3}-1$. For $r=1$, we obtain $P=\alpha_2(0,0)\in S_2$, with $(e_1,e_2)=(1,1)$, 
while for $2\leq r\leq\frac{a}{3}$, we obtain $P=\alpha_2(0,r')\in S_1$.
Next, suppose that $1\leq v\leq\frac{2a}{3}-1$. Let us call $v'=v-1$ and $r'=r+\frac{a}{3}+2$. Then $Q=-v'-(v'(a+2)+r')\rho+(v'+1)\rho^2$, with $0\leq v'\leq\frac{2a}{3}-2$ and $\frac{a}{3}+3\leq r'\leq\mathrm{min}(\frac{2a}{3},a-v'+1)$.
Then $Q=\alpha_0(v',r')\in\blacktriangle$ is totally positive. Finally, we assume that $\alpha\in R_3$, so that $0\leq v\leq\frac{2a}{3}+1$ and $r=0$. For $v=0$, we have $\alpha=1+\rho^2-g_3=\alpha_1(0,0)$, which we already know is decomposable, and $T_1(\alpha_1(0,0))=\alpha_1(\frac{2a}{3}+1,0)$, so we may assume that $1\leq v\leq\frac{2a}{3}$. For $v'=v-1$ and $r'=\frac{a}{3}+2$, we have $Q=-v'-(v'(a+2)+r')\rho+(v'+1)\rho^2$, with $0\leq v'\leq\frac{2a}{3}-1$ and $0<r'<a-v'+1$. Then $Q=\alpha_0(v',r')\in\blacktriangle$ is totally positive.

Next, let us take $\alpha\in R_5\cup R_6\cup R_7$, so that $$\alpha=-(v-1)-(v(a+2)+r)\rho+(v+2)\rho^2-g_3$$ with $\frac{a}{3}+1\leq v\leq\frac{2a}{3}$ and $\frac{2a}{3}-v+1\leq r\leq\frac{a}{3}$ or $\frac{2a}{3}+1\leq v\leq a$ and $1\leq r\leq a-v+1$. Let us write $$\alpha=\alpha_2(0,0)+[-(v-1)-(v(a+2)+r-1)\rho+(v+2)\rho^2-2g_3].$$ On the other hand, our hypotheses on $v$ and $r$ imply that in particular $\frac{a}{3}\leq v\leq a$ and $1\leq r\leq a-v+1$, so the second summand is just $\alpha_2(v,r)\in S_4\cup S_5$, and hence it is totally positive. Therefore, $\alpha$ is decomposable for this case.

It remains to study the points in the region $R_{13}-\{v=\frac{2a}{3},\,\frac{a}{3}+2\leq r\leq a+1\}$. That is, we consider $$\alpha=-v-(v(a+2)+r)\rho+(v+2)\rho^2-g_3,$$ with $1\leq v\leq\frac{2a}{3}-1$ and $a-v+2\leq r\leq a+1$. Let us write
\begin{equation*}
    \begin{split}
        \alpha&=\Big[-\Big(\frac{a}{3}+1\Big)\rho+\rho^2-g_3\Big]+[-v-\Big(v(a+2)+r-\frac{a}{3}-1\Big)\rho+(v+1)\rho^2]
    \end{split}
\end{equation*}
The first summand is $\alpha_1(0,\frac{a}{3}+1)$, which is totally positive. As for the second one, let $r'=r-\frac{a}{3}-1$ and assume that $r\leq\frac{4a}{3}-v+1$ (note that this does not impose any restriction if $1\leq v\leq\frac{a}{3}$). Then $\frac{2a}{3}-v+1\leq r'\leq\mathrm{min}(\frac{2a}{3},a-v)$, so we obtain $\alpha_0(v,r')\in\blacktriangle$, which is totally positive. Hence, $\alpha$ is decomposable. The remaining case corresponds to the points for which $\frac{a}{3}+1\leq v\leq\frac{2a}{3}-1$ and $\frac{4a}{3}-v+2\leq r\leq a+1$. But those are the images by $T_1$ of the points in $R_{13}$ for which $1\leq v\leq\frac{a}{3}$, so they are decomposable.
\end{proof}

\begin{pro}\label{pro:decs2} The elements with $s=2$ other than the ones in Theorem \ref{teolistindec} (vi), (vii) and (viii) are decomposable.
\end{pro}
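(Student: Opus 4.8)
The plan is to follow the same pattern as in Propositions~\ref{pro:decs0} and \ref{pro:decs1}: exploit that the transformation $T_1$ from Section~\ref{sect:t1t2} multiplies a conjugate of any lattice point by a totally positive unit, and hence preserves both indecomposability and decomposability. Thus it suffices to produce an explicit decomposition $\alpha=\beta+\gamma$ into two totally positive elements of $\mathcal{O}_K$ for a well-chosen set of representatives of the regions $S_1,\dots,S_{15}$, the indecomposability of the elements in Theorem~\ref{teolistindec}\,(vi)--(viii) being already established in Proposition~\ref{proindecs12}.

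First I would cut down the bookkeeping using Lemma~\ref{lemat1s=2}. Since $T_1(S_1\cup S_2)=S_{12}$, $T_1(S_{12})=S_9$, $T_1(S_3)=S_6\cup S_7$ and $T_1(S_6\cup S_7)=S_{14}\cup S_{15}$, and since one checks that $T_1$ carries the indecomposable edges of Theorem~\ref{teolistindec}\,(vi)--(viii) onto one another — the edge $\{v=0\}$ of $S_1\cup S_2$ to the edge $\{r=\tfrac{4a}{3}-v+1\}$ of $S_{12}$, and that edge in turn to the edge $\{r=\tfrac{2a}{3}+1\}$ of $S_9$ — it is enough to prove decomposability of the points of $(S_1\cup S_2)\setminus\{v=0\}$, of $S_3$, and of the remaining regions $S_4$, $S_5$, $S_8$, $S_{10}$, $S_{11}$, $S_{13}$ that do not arise as $T_1$-images of the regions already treated. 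Applying $T_1$ and $T_1^2$ then yields decomposability of $S_{12}\setminus\{v=0\text{-image}\}$, of $S_9\setminus\{\text{its indec.\ edge}\}$, of $S_6\cup S_7$ and of $S_{14}\cup S_{15}$, which together account for all of $S_1,\dots,S_{15}$ except precisely the points of Theorem~\ref{teolistindec}\,(vi), (vii) and (viii).

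For each region in the reduced list I would exhibit a decomposition of one of the following shapes: either split the term $-2g_3$ into two copies of $-g_3$, writing $\alpha=\alpha_1(v_1,r_1)+\alpha_1(v_2,r_2)$ with both summands genuine lattice points of Table~\ref{tab:points1} (hence totally positive); or peel off one of the already-known totally positive elements $\alpha_2(0,0)=g_3-g_2=\tfrac13(1-\rho)^2$, $\alpha_1(0,0)=2g_3-g_2$, $\rho^2=-g_1-g_2+3g_3$, $g_3$, or the $R_4$-points $\alpha_1(0,\tfrac{2a}{3})$ and $\alpha_1(0,\tfrac{a}{3}+1)$, and show that the complementary summand falls, after a change of the parameters $(v,r)$, into a region already known to consist of totally positive elements (the triangle $\blacktriangle$, some $R_i$, or some $S_j$ handled earlier). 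Total positivity of the summands is then either immediate or follows from the estimates $a+1<\rho<a+1+\tfrac2a$, $-1-\tfrac1a<\rho'<-1-\tfrac1{2a}$, $-\tfrac1{a+2}<\rho''<-\tfrac1{a+3}$ used throughout Section~\ref{sec:indec}, valid for $a\geq21$. The step I expect to be the real obstacle is exactly this case analysis: no single decomposition rule covers an entire region, because near its boundaries — for instance when $v+r$ is close to $a+1$, or $r$ is close to $\tfrac{a}{3}$, $\tfrac{2a}{3}$, or $\tfrac{4a}{3}-v+1$ — the preferred choice of $\beta$ forces $\gamma$ outside every good region, so each $S_j$ must be split into several subcases with an alternative decomposition supplied on each, just as was done for $s=0$ and $s=1$ in Propositions~\ref{pro:decs0} and \ref{pro:decs1}. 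A final verification that these subcases, together with the $T_1$-reduction above, leave uncovered exactly the lattice points of Theorem~\ref{teolistindec}\,(vi), (vii), (viii) then completes the argument.
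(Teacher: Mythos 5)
Your overall strategy is exactly the paper's: reduce via Lemma \ref{lemat1s=2} to the regions $S_1$, $S_2$, $S_3$, $S_4$, $S_5$, $S_8$, $S_{10}$, $S_{11}$, $S_{13}$, check that $T_1$ carries the three indecomposable lines of Theorem \ref{teolistindec} (vi)--(viii) onto one another (your computation that the edge $\{v=0\}$ of $S_1\cup S_2$ maps to $\{r=\frac{4a}{3}-v+1\}$ in $S_{12}$ and thence to $\{r=\frac{2a}{3}+1\}$ in $S_9$ is correct), and then decompose each remaining point by peeling off one of a short list of known totally positive elements. You have even identified the correct building blocks: $\alpha_2(0,0)=1+\rho^2-2g_3=\frac{1}{3}(1-\rho)^2$ and the $R_4$ points $\alpha_1(0,\frac{2a}{3})$ and $\alpha_1(0,\frac{a}{3}+1)$ are precisely the summands the paper uses.

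The genuine gap is that the decompositions are never actually produced: you state that each $S_j$ must be split into subcases with an alternative decomposition on each, and stop there. That case analysis is not a routine verification one can defer --- it is the entire content of the proof. Concretely, the paper must show, region by region, that after subtracting the chosen summand the remainder reparametrizes into a region already known to consist of totally positive elements: for $S_1\cup S_2$ minus its indecomposable line, subtracting $\alpha_1(0,\frac{2a}{3})$ leaves a point of Theorem \ref{teolistindec} (v) with $(v',r')=(v-1,\,r+\frac{a}{3}+2)$; for $S_3\cup S_4\cup S_5\cup S_8$ one subtracts $1+\rho^2-2g_3$ when $r>0$ (remainder in $\blacktriangle$) but needs a \emph{different} decomposition via $\alpha_1(0,\frac{a}{3}+1)$ when $r=0$, whose remainder lands in $R_{10}$, $R_{12}$, $R_{13}$ or $R_{14}$ depending on $v$; $S_{10}\cup S_{11}$ decomposes as a sum of two $R_4$ points $\alpha_1(0,v+r-\frac{2a}{3})+\alpha_1(v,\frac{2a}{3}-v)$; and $S_{13}$ as $[1+\rho^2-2g_3]$ plus a point of $P_3$. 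Each of these requires checking that the shifted parameters satisfy the defining inequalities of the target region, and it is exactly at the boundaries you flag that the choice of summand changes. Until those verifications are written out, the proposition is not proved.
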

\begin{proof}
Among these lattice points, the indecomposables correspond to:
\begin{itemize}
    \item the line $\{v=0,\,0\leq r\leq\frac{a}{3}-1\}$, contained in $S_1\cup S_2$,
    \item the line $\{0\leq v\leq\frac{a}{3}-1,\,r=\frac{2a}{3}+1\}$, contained in $S_9$,
    \item the line $\{\frac{a}{3}\leq v\leq\frac{2a}{3}-1,\,r=\frac{4a}{3}-v+1\}$, contained in $S_{12}$.
\end{itemize} and we know that they are indecomposable from Proposition \ref{proindecs12}. Moreover, by Lemma \ref{lemat1s=2}, it is enough to find decompositions for all the other points in the regions $S_1$, $S_2$, $S_3$, $S_4$, $S_5$, $S_8$, $S_{10}$, $S_{11}$ and $S_{13}$.

We start with the regions $S_1$ and $S_2$. Among their points, the line $\{\alpha_2(0,r)\,|\,0\leq r\leq\frac{a}{3}-1\}$ are the elements in Theorem \ref{teolistindec} (vi), and we know that they are indecomposable from Proposition \ref{proindecs12}. The remaining points are of the form \begin{equation*}
    \begin{split}
        \alpha&=-(v-1)-(v(a+2)+r)\rho+(v+1)\rho^2-2g_3\\&=\alpha_1\Big(0,\frac{2a}{3}\Big)+\Big[-(v-1)-\Big(v(a+2)+r-\frac{2a}{3}\Big)\rho+v\rho^2-g_3\Big]
    \end{split}
\end{equation*} with $1\leq v\leq\frac{a}{3}-1$ and $0\leq r\leq\frac{a}{3}-v-1$.
We rewrite the second summand as $$-v'-\Big(v'(a+2)+r'\Big)\rho+(v'+1)\rho^2-g_3,$$ where $v'=v-1$ and $r'=r+\frac{a}{3}+2$.
Since $0\leq v'\leq\frac{a}{3}-2$ and $\frac{a}{3}+2\leq r'\leq\frac{2a}{3}-v'$, this is one of the indecomposables in Theorem \ref{teolistindec} (v).

Next, we consider the points from the regions $S_3$, $S_4$, $S_5$ and $S_8$, which are of the form $$\alpha=-(v-1)-(v(a+2)+r)\rho+(v+2)\rho^2-2g_3$$ with $0\leq v\leq\frac{a}{3}-1$ and $\frac{a}{3}-v\leq r\leq\frac{2a}{3}$ or $\frac{a}{3}\leq v\leq a+1$ and $0\leq r\leq a+1-v$. First we assume that $r>0$, so that $v\leq a$. Let us write $$\alpha=[1+\rho^2-2g_3]+[-v-(v(a+2)+r)\rho+(v+1)\rho^2].$$ Under these hypotheses we have $1\leq r\leq a-v+1$, so the second summand belongs to $\blacktriangle$ and hence it is totally positive. Now, let us choose $r=0$ and write $$\alpha=\Big[-\Big(\frac{a}{3}+1\Big)\rho+\rho^2-g_3\Big]+\Big[-(v-1)-\Big((v-1)(a+2)+\frac{2a}{3}+1\Big)\rho+(v+1)\rho^2-g_3\Big].$$ The first summand is $\alpha_1(0,\frac{a}{3}+1)$. The second one can be rewritten as $$-v'-\Big(v'(a+2)+r'\Big)\rho+(v'+2)\rho^2-g_3,$$ where $v'=v-1$ and $r'=\frac{2a}{3}+1$. This belongs to:
\begin{itemize}
    \item $R_{10}$, if $v=\frac{a}{3}$;
    \item $R_{12}$, if $v=\frac{a}{3}+1$;
    \item $R_{13}$, if $\frac{a}{3}+2\leq v\leq\frac{2a}{3}+1$;
    \item $R_{14}$, if $\frac{2a}{3}+2\leq v\leq a+1$.
\end{itemize}
Hence, the points in these regions are decomposable.

Now, we consider the points from the regions $S_{10}$ and $S_{11}$, which are of the form $$\alpha=-v-(v(a+2)+r)\rho+(v+2)\rho^2-2g_3$$ with $0\leq v\leq\frac{a}{3}-1$ and $a-v+1\leq r\leq a+1$. Let us write $$\alpha=\Big[-\Big(v+r-\frac{2a}{3}\Big)\rho+\rho^2-g_3\Big]+\Big[-v-\Big(v(a+2)+\frac{2a}{3}-v\Big)\rho+(v+1)\rho^2-g_3\Big].$$ The second summand is clearly the point $\alpha_1(v,\frac{2a}{3}-v)\in R_4$, and consequently totally positive. As for the first one, if we define $r'=v+r-\frac{2a}{3}$, from $a-v+1\leq r\leq a+1$ we obtain that $\frac{a}{3}+1\leq r'\leq\frac{a}{3}+1+v$, and since $v\leq\frac{a}{3}-1$ we have actually that $\frac{a}{3}+1\leq r'\leq\frac{2a}{3}$. Therefore the first summand is $\alpha_1(0,r')\in R_4$. Hence it is totally positive, and $\alpha$ is decomposable.

Finally, we consider the points of the region $S_{13}$, which are of the form $$\alpha=-(v-1)-(v(a+2)+r)\rho+(v+3)\rho^2-2g_3,$$ where $\frac{2a}{3}+1\leq v\leq a$ and $\frac{4a}{3}-v+2\leq r\leq\frac{2a}{3}+1$. We have that $$\alpha=[1+\rho^2-2g_3]+[-v-(v(a+2)+r)\rho+(v+2)\rho^2].$$ Since $r>a-v+2$, the second summand is a lattice point $\alpha_0(v,r)\in P_3$, so it is totally positive, and $\alpha$ is decomposable.
\end{proof}

We conclude that Theorem \ref{teolistindec} is established from Propositions  \ref{proindecs12}, \ref{promintrace2}, \ref{proindecs0}, \ref{pro:decs0}, \ref{pro:decs1} and \ref{pro:decs2}.

\section{Consequences}\label{sect:consequences}

In this section, we will use our knowledge of indecomposable integers for $[\O_K:\Z[\rho]]=3$ to derive several other results for these fields. First, we will focus on the minimal norm of algebraic integers in $K$ not associated with rational integers. Such a minimal norm for monogenic simplest cubic fields is $2a+3$ as was shown by Lemmermeyer and Peth\"o \cite{lemmermeyerpetho}. We will prove that except for a few cases of $a$, $2a+3$ is still minimal for our family with $p=3$.

Then, we will find an upper bound on the norm of indecomposable integers in $\O_K$. Recall that in any totally real number field, the norm of indecomposable integers is bounded \cite{brunotte,kalayatsynabound}. So far, this upper bound was intensively studied for real quadratic fields \cite{dressscharlau,jangkim,kalanorms,tinkovavoutier} and several families of monogenic totally real cubic number fields \cite{tinkova}. Let us highlight that our bound differs from the bound given in \cite{tinkova} for the simplest cubic fields with $\O_K=\Z[\rho]$.

Moreover, we will use indecomposable integers to show 
that the Pythagoras number of $\O_K$ is always $6$ in our subfamily. A similar result was derived by Tinkov\'a \cite{tinkovapyth}, who proved that the Pythagoras number of $\Z[\rho]$ is $6$ whenever $a\geq 3$. Moreover, in the final subsection, we will find both upper and lower bounds on the minimal number of variables of universal quadratic forms over $\O_K$. For that, we will follow the procedure developed in \cite{kalatinkova}, which is based on the knowledge of indecomposable integers and their minimal traces after multiplication by elements of the codifferent.

\subsection{The smallest norm}

Now, we will find the smallest norm of elements which are not associated with rational integers. To do that, we use the knowledge of the structure of indecomposables stated in Theorem \ref{teolistindec}.

\begin{pro}\label{prosmallestnorm}
Let $\alpha\in\O_K$. Then either 
\[
|N(\alpha)|\geq\left\{
\begin{array}{ll} \frac{\Delta}{27} & \text{ if } a=21,30,48, \\
 2a+3 & \text{ if } a>48,
\end{array}\right.
\]
or $\alpha$ is associated with a rational integer. Moreover, this lower bound is attained by some $\alpha\in\O_K$.
\end{pro}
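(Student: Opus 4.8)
The plan is to reduce to the case of a totally positive $\alpha$ and then read off the answer from the explicit list of indecomposables in Theorem~\ref{teolistindec}. The units $\rho$, $\rho'$ and $-1$ have sign vectors $(\mathrm{sgn}\,\sigma_1(\cdot),\mathrm{sgn}\,\sigma_2(\cdot),\mathrm{sgn}\,\sigma_3(\cdot))$ equal to $(+,-,-)$, $(-,-,+)$ and $(-,-,-)$, which span $\{\pm1\}^3$; since $\{\rho,\rho'\}$ is a fundamental system of units by Corollary~\ref{coro:fundamentalunits}, every nonzero $\alpha\in\mathcal{O}_K$ has a unit multiple $u\alpha\succ0$, and $|N(u\alpha)|=|N(\alpha)|$ while $\alpha$ is associated with a rational integer if and only if $u\alpha$ is. So we may assume $\alpha\succ0$. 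Writing $\alpha$ as a sum of indecomposables $\iota_1+\dots+\iota_k$, each $\iota_j$ being a totally positive unit times an element of Theorem~\ref{teolistindec}, and using the super-additivity $N(\beta+\gamma)\ge N(\beta)+N(\gamma)$ for $\beta,\gamma\succ0$ (immediate by expanding the product of the three coordinatewise sums), we obtain $|N(\alpha)|\ge\sum_j|N(\iota_j)|\ge|N(\iota_j)|$ for every summand.

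The core of the argument is therefore to compute $|N(\iota)|$ for the indecomposables $\iota$ of Theorem~\ref{teolistindec}. The elements of types (ii)--(iv) are exactly the lattice points with $s=0$, hence lie in $\mathbb{Z}[\rho]$; they are indecomposable and not units, hence not associated with a rational integer, so the theorem of Lemmermeyer and Peth\"o \cite{lemmermeyerpetho} gives $|N(\iota)|\ge 2a+3$ (note $\rho,\rho',\rho^{-1},(\rho')^{-1}\in\mathbb{Z}[\rho]$, so the unit groups of $\mathbb{Z}[\rho]$ and $\mathcal{O}_K$ coincide and the two notions of ``associated with a rational integer'' agree). For $g_3$ one reads off $|N(g_3)|=h_3(1,1)/27=\Delta/27$ from its minimal polynomial. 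The elements of types (vii) and (viii) are associated with conjugates of elements of type (vi) (as in the proof of Proposition~\ref{proindecs12}), hence have the same norms, and the norms of types (v) and (vi) are obtained directly by evaluating the symmetric function $\prod_i(A\,\sigma_i(\rho)^2+B\,\sigma_i(\rho)+C)$ through the coefficients of $f$; this yields an explicit polynomial in $a$ and the parameters, e.g. type (vi) with $r=0$ gives $(2a+3)^2/27\ge 2a+3$, which one checks stays above the claimed bound throughout the allowed ranges. One concludes that the minimum of $|N(\iota)|$ over non-unit indecomposables is $2a+3$, attained by $\rho^2-\rho$ (type (ii), $r=1$, with $N(\rho^2-\rho)=-f(1)=2a+3$), except precisely when $\Delta/27<2a+3$, which in our family happens only for $a\in\{21,30,48\}$; for these three values a finite check shows every non-unit indecomposable has norm $\ge\Delta/27$, with equality for $g_3$.

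It remains to handle the case where every $\iota_j$ is a unit, i.e.\ $\alpha$ is a sum of totally positive units. After scaling by one of them we may write $\alpha\sim 1+v_2+\dots+v_k$ with each $v_i$ a totally positive unit. If all $v_i=1$ then $\alpha$ is associated with the rational integer $k$, and there is nothing to prove; otherwise, choosing some $v_i\ne1$, we get $|N(\alpha)|\ge N(1+v_i)\ge 2+\mathrm{Tr}(v_i)$. The totally positive units form the group $\langle\rho^2,(\rho')^2\rangle$, and a case analysis on the exponents, using $a+1<\rho<a+1+\tfrac2a$, $-1-\tfrac1a<\rho'<-1-\tfrac1{2a}$, $-\tfrac1{a+2}<\rho''<-\tfrac1{a+3}$, shows every nontrivial totally positive unit has a conjugate exceeding $(a+1)^2$; hence $\mathrm{Tr}(v_i)>(a+1)^2$ and $|N(\alpha)|>(a+1)^2+2>2a+3$, which dominates both claimed bounds. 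This exhausts all possibilities, and the attainment has already been exhibited: $g_3$ for $a\in\{21,30,48\}$ (norm $\Delta/27$) and $\rho^2-\rho$ otherwise (norm $2a+3$), both indecomposable, hence not associated with a rational integer.

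I expect the main obstacle to be the bookkeeping in the second paragraph: confirming that the infinite families (v), (vii), (viii) (and the interior of (vi)) never dip below $2a+3$ requires computing several two-parameter norm polynomials and minimizing them over the stated triangular regions, and one must also verify the boundary value $a=48$ versus $a=57$ pins down the exceptional set exactly. A secondary, slightly less routine point is making the ``sum of totally positive units'' estimate fully rigorous for all exponent vectors $\rho^{2m}(\rho')^{2n}$ rather than only the generators $\rho^{\pm2},(\rho')^{\pm2},(\rho\rho')^{\pm2}$; this is done by splitting on the signs of $m$ and $n$ and locating, in each case, the conjugate which is forced to be large.
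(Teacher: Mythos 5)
Your proposal follows essentially the same route as the paper: reduce to totally positive elements, split off the case where $\alpha$ is a sum of totally positive units, and otherwise bound $|N(\alpha)|$ below by the minimum of $|N(\iota)|$ over the indecomposables of Theorem \ref{teolistindec}, which comes down to comparing $\Delta/27$ (from $g_3$) with $2a+3$ (from type (ii)); the threshold $\Delta/27>2a+3 \Leftrightarrow a\geq 53$ then yields exactly the exceptional set $\{21,30,48\}$. Two genuine differences are worth noting. Your appeal to the Lemmermeyer--Peth\"o bound to dispose of types (ii)--(iv) wholesale (after observing that the unit groups of $\mathbb{Z}[\rho]$ and $\mathcal{O}_K$ coincide, so the two notions of association agree) is a legitimate shortcut the paper does not use in its proof, and your witness $\rho^2-\rho$ with $N(\rho^2-\rho)=-f(1)=2a+3$ is correct. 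On the other hand, the decisive quantitative step --- showing that the line (vi) and especially the two-parameter triangle (v) never produce a norm below $\min(\Delta/27,\,2a+3)$ --- is asserted with ``one checks'' rather than carried out, and this is where the paper's proof spends most of its effort: it shows the relevant norm polynomials are cubics in $r$ whose minima over the admissible intervals occur at the endpoints, evaluates them there ($\tfrac{(2a+3)^2}{27}$ for (vi), $\tfrac{2a^2+6a-9}{27}$ for (v)), and separately checks $a=21$, where $\tfrac{2a^2+6a-9}{27}=37>19=\Delta/27$. Until that minimization is done your argument is a correct and well-organized plan rather than a complete proof; the remaining ingredients (superadditivity of the norm, units of all signatures, exclusion of associates of rational integers, and the bound for sums of totally positive units, for which you could simply cite \cite[Lemma 6.2]{kalatinkova} as the paper does) are all sound.
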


\begin{proof}
The smallest such norm can be attained either by an indecomposable integer or by a sum of two totally positive units.
Let us first discuss the second case. Recall that even in this case, a system of fundamental units is formed by the pair $\rho$ and $\rho'$, see Corollary \ref{coro:fundamentalunits}. Every sum of two units is associated with an element of the form $1+\varepsilon$ where $\varepsilon$ is a totally positive unit. Moreover, we have $\varepsilon\neq 1$ since otherwise, $1+\varepsilon$ would be associated with the rational integer $2$. Therefore, by \cite[Lemma 6.2]{kalatinkova}, $\varepsilon$ is greater than $a^2$ in some embedding. Thus,
\[
N(1+\varepsilon)=N(1)+N(\varepsilon)+\text{Tr}(\varepsilon)+\text{Tr}(\varepsilon\varepsilon')>a^2.
\] 
However, $a^2>2a+3$ for $a\geq 21$, and $2a+3$ is the norm of one of the indecomposable integers in $\O_K$. Thus, a sum of two units cannot have the smallest norm.

Since the elements in Theorem \ref{teolistindec} (iii) and (iv) are associated with conjugates of elements in \ref{teolistindec} (ii), and the same is true for (vi)--(viii), it is enough to consider elements in (i), (ii), (v) and (vi).

The norm of $\frac{1+\rho+\rho^2}{3}$ is equal to $\frac{a^2+3a+9}{27}$. The smallest norm among elements in (ii) is $2a+3$.

Regarding elements in (vi), we have
\[
N\left(-(r+1)g_2+g_3\right)=-r^3-3r^2+\frac{a^2+3a-18}{9}r+\frac{4a^2+12a+9}{27}
\]
where $0\leq r \leq \frac{a}{3}-1$.
This norm is a cubic polynomial in $r$ with a negative leading coefficient. Moreover, for $r=-2$, it is equal to $-\frac{2a^2+6a-9}{27}<0$ for $a\geq 21$. It means that in the interval $\left[0,\frac{a}{3}-1\right]$, it attains its smallest value at $0$ or at $\frac{a}{3}-1$. For $r=0$, we obtain $\frac{4a^2+12a+9}{27}>2a+3$ for $a\geq 21$. Similarly, for $r=\frac{a}{3}-1$, we again obtain the norm $\frac{4a^2+12a+9}{27}$. Thus, the smallest norm is not attained by elements in (vi). 

Let us now focus on elements in (v). We can deduce that
\begin{multline*}
N\left(-(2v+1)g_1-(v(a+3)+r+1)g_2+(3v+2)g_3\right)\\=-r^3-(av+3v-a)r^2-\left(av^2-a^2v-3av-3v+\frac{2a^2}{9}-\frac{a}{3}-1\right)r\\+v^3+\frac{a^2}{3}v^2-\frac{2a^3+7a^2+12a+9}{9}v-\frac{4a^2+12a+9}{27}
\end{multline*}
where $0\leq v\leq \frac{a}{3}-1$ and $\frac{a}{3}+1\leq r\leq \frac{2a}{3}-v$.
This is again a cubic polynomial in $r$ with a negative leading coefficient. If $r=\frac{a}{3}$, we obtain 
\[
v^3-\frac{a^3+3a+9}{9}v-\frac{a^2+3a+9}{27},
\]
which is a cubic polynomial in $v$ with a positive leading coefficient. Moreover, for $v=-1$, it is $\frac{2a^2+6a-9}{27}>0$; for $v=0$, it is equal to $-\frac{a^2+3a+9}{27}<0$, and for $v=\frac{a}{3}-1$, it equals $-\frac{10a^2-24a+9}{27}<0$ for $a\geq 21$. It gives that whenever $r=\frac{a}{3}$ and $0\leq v\leq \frac{a}{3}-1$, then our norm is negative. It implies that for a fixed $v$, the smallest norm is attained either for $r=\frac{a}{3}+1$, or for $r=\frac{2a}{3}-v$.

Let us now discuss the case when $r=\frac{a}{3}+1$. Here, we get the norm    
\[
v^3-av^2+\frac{2a^2-3a-9}{9}v+\frac{2a^2+6a-9}{27}.
\]
For $v=\frac{a}{3}$, it is equal to $-\frac{a^2+3a+9}{27}<0$. Thus, it again attains its smallest value in one of the border points of our interval. For both of them, it is equal to $\frac{2a^2+6a-9}{27}>2a+3$ for $a\geq 26$. If $a=21$, then $\frac{2a^2+6a-9}{27}=37>19=\frac{a^2+3a+9}{27}$, so we can exclude this norm even in this case.   

Similarly, for $r=\frac{2a}{3}-v$, we obtain
\[
-v^3-3v^2+\frac{a^2+3a-18}{9}v+\frac{2a^2+6a-9}{27}.
\] 
For $v=-1$, it is equal to $-\frac{a^2+3a+9}{27}$; thus, the smallest norm is attained in a border point. For both of them, we again obtain the value $\frac{2a^2+6a-9}{27}$ which we have excluded before.

Thus, we are left with the norms $\frac{a^2+3a+9}{27}$ and $2a+3$, and $\frac{a^2+3a+9}{27}>2a+3$ if $a\geq 53$. Between $21$ and $52$, only the cases $a=21,30,48$ belong to our family, and for them, the norm     $\frac{a^2+3a+9}{27}$ is the smallest one.
\end{proof}

\subsection{The largest norm}

In this part, we will find a sharp upper bound on the norm of indecomposable integers in $\O_K$. As in the case of the smallest norm, it is enough to discuss elements in (i), (ii), (v) and (vi). We will start with elements in (ii) and (vi).

\begin{lema} \label{prop:normlines}
Let $\alpha\in\O_K$ be an indecomposable integer. Then:
\begin{enumerate}
\item If $\alpha$ is as in (ii), then $N(\alpha)\leq \frac{2a^3+9a^2+27a+27}{27}=\frac{(2a+3)\Delta}{27}$.
\item If $\alpha$ is as in (vi), then $N(\alpha)< \frac{(2a+3)\Delta}{27}$. 
\end{enumerate}
\end{lema}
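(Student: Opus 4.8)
The plan is to evaluate the norm of each indecomposable explicitly as a polynomial in the defining data and to bound it over the prescribed ranges, exactly in the spirit of Proposition~\ref{prosmallestnorm}. For part~(1), the elements in~(ii) are $\alpha=-r\rho+\rho^2$ with $1\leq r\leq\frac{a}{3}$, so I would compute $N(\alpha)=N(\rho)\cdot N(\alpha/\rho)$, or more directly expand $N(-r\rho+\rho^2)=-\rho^3\rho'^3\rho''^3\cdot(\ldots)$ using the known factorisation; in any case one gets a cubic polynomial in $r$. Using the explicit inequalities $a+1<\rho<a+1+\frac 2a$, $-1-\frac1a<\rho'<-1-\frac1{2a}$, $-\frac1{a+2}<\rho''<-\frac1{a+3}$, or simply the symmetric-function identities ($\rho+\rho'+\rho''=a$, $\rho\rho'+\rho\rho''+\rho'\rho''=-(a+3)$, $\rho\rho'\rho''=1$), I would write $N(-r\rho+\rho^2)$ as a concrete cubic $c_3r^3+c_2r^2+c_1r+c_0$ and check that on $[1,\frac a3]$ its maximum is at $r=1$, giving $N(\rho^2-\rho)=N(\rho)N(\rho-1)$; since $N(\rho)=1$ and $N(\rho-1)=-f(1)=-(1-a-(a+3)-1)=2a+3$, this yields $N(\alpha)=2a+3$ at $r=1$. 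Wait — that is the \emph{smallest} norm, so the claimed \emph{upper} bound $\frac{(2a+3)\Delta}{27}$ must come from the other endpoint $r=\frac a3$; I would evaluate the cubic there and verify it equals (or is bounded by) $\frac{2a^3+9a^2+27a+27}{27}=\frac{(2a+3)(a^2+3a+9)}{27}$, and confirm monotonicity/convexity of the cubic on the interval so that no interior point exceeds this.

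For part~(2), the elements in~(vi) are $-(r+1)g_2+g_3=1-r\rho+\rho^2-2g_3$ with $0\leq r\leq\frac a3-1$, and from the proof of Proposition~\ref{prosmallestnorm} we already have the closed form
\[
N\bigl(-(r+1)g_2+g_3\bigr)=-r^3-3r^2+\frac{a^2+3a-18}{9}r+\frac{4a^2+12a+9}{27}.
\]
I would treat this as a cubic in $r$ with negative leading coefficient, locate its unique local maximum (the critical point solves $3r^2+6r-\frac{a^2+3a-18}{9}=0$, i.e.\ $r=-1+\frac13\sqrt{\frac{a^2+3a+9}{3}}=-1+\frac13\sqrt{\Delta/3}$), substitute back, and show the resulting value is strictly less than $\frac{(2a+3)\Delta}{27}$. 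Since the critical point is roughly $\frac{\sqrt{3\Delta}}{9}\approx\frac{a}{3\sqrt3}$, which lies inside $[0,\frac a3-1]$ for $a$ large, the true maximum over the integer range is attained near there; bounding the cubic at its real maximum by $\frac{(2a+3)\Delta}{27}$ reduces to a single polynomial inequality in $a$ (after clearing the square root, an inequality between polynomials), valid for all $a$ in the family (so $a\geq 21$, $a\equiv 3,21\pmod{27}$). One should double-check the small cases $a=21,30,48$ by hand if the asymptotic estimate is not yet decisive there.

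The main obstacle I anticipate is purely bookkeeping: getting the norm formulas right (signs, the factor $\frac{1}{27}$ coming from $g_3=\frac{1+\rho+\rho^2}{3}$, and the fact that $N$ is \emph{not} linear, so cross terms $\operatorname{Tr}$ of products appear) and then handling the square-root critical point in part~(2) cleanly. A way to sidestep the irrational critical point is to prove the cruder bound $-r^3-3r^2+\frac{a^2+3a-18}{9}r+\frac{4a^2+12a+9}{27}\leq \frac{a^2+3a-18}{9}r+\frac{4a^2+12a+9}{27}$ (dropping the negative cubic and quadratic terms, valid for $r\geq 0$) and then using $r\leq\frac a3-1$ to get a quadratic-in-$a$ upper bound, which one compares termwise with $\frac{(2a+3)\Delta}{27}=\frac{2a^3+9a^2+27a+27}{27}$; the cubic term $\frac{2a^3}{27}$ on the right dominates the quadratic upper bound for all $a\geq 21$, giving the strict inequality at once. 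I would use this shortcut unless a sharper statement is needed later, and fall back on the exact critical-point computation only if the crude bound proves insufficient.
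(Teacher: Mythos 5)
Your proposal is correct, and part (1) matches the paper, which simply asserts that the maximum of $N(-r\rho+\rho^2)=-f(r)=-r^3+ar^2+(a+3)r+1$ over $1\leq r\leq\frac{a}{3}$ is attained at $r=\frac{a}{3}$; your monotonicity check (the critical points of this cubic are $\frac{a\pm\sqrt{\Delta}}{3}$, both outside $[1,\frac{a}{3}]$) is exactly the detail needed to justify that assertion. For part (2) you diverge from the paper: the paper forms the difference $g(r)=\frac{(2a+3)\Delta}{27}-N(\alpha)=r^3+3r^2-\frac{a^2+3a-18}{9}r+\frac{2a^3+5a^2+15a+18}{27}$, computes that its discriminant is negative so $g$ has a unique real root, and concludes from $g(0)>0$ that $g>0$ on the whole range — a slick but computation-heavy argument. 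Your shortcut of discarding the negative terms $-r^3-3r^2$ and evaluating the remaining linear-in-$r$ bound at $r=\frac{a}{3}-1$ is more elementary and avoids both the discriminant computation and the irrational critical point; it gives $N(\alpha)\leq\frac{a^3+4a^2-15a+63}{27}$, and the difference from $\frac{2a^3+9a^2+27a+27}{27}$ is $\frac{a^3+5a^2+42a-36}{27}>0$, so the strict inequality holds for all $a$ in the family. One small correction: that upper bound is cubic in $a$, not quadratic as you wrote (since $r\leq\frac{a}{3}-1$ multiplies a coefficient of order $a^2$), but the comparison still succeeds because $2a^3$ dominates $a^3$; this is a labelling slip, not a gap.
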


\begin{proof}
To prove (1), it is not difficult to see that the largest norm among these elements is attained when $r=\frac{a}{3}$, for which $N(\alpha)=\frac{2a^3+9a^2+27a+27}{27}$.

On the other hand, if $\alpha=-(r+1)g_2+g_3$, we have
\[
g(r)=\frac{2a^3+9a^2+27a+27}{27}-N(\alpha)=r^3+3r^2-\frac{a^2+3a-18}{9}r+\frac{2a^3+5a^2+15a+18}{27}.
\]
This is a cubic polynomial in $r$ with a positive leading coefficient. Moreover, the discriminant of $g$ is negative, in which case $g$ has only one real root. This together with $g(0)=\frac{2a^3+5a^2+15a+18}{27}>0$ implies that $g(r)>0$ for all $0\leq r\leq \frac{a}{3}-1$, completing the proof. 
\end{proof}

Note that 
\[
N\left(\frac{1+\rho+\rho^2}{3}\right)=\frac{a^2+3a+9}{27}<\frac{2a^3+9a^2+27a+27}{27}
\]
for $a\geq 21$. Therefore, we can also exclude the element $\frac{1+\rho+\rho^2}{3}$ from the consideration.

It remains to discuss the elements in (v), which form the triangle $R_4$ in Section \ref{sec:reprpoints}. Let us denote these integers as
\[
\alpha(v,r)=-(2v+1)g_1-(v(a+3)+r+1)g_2+(3v+2)g_3
\]
where $0\leq v\leq \frac{a}{3}-1$ and $\frac{a}{3}+1\leq r\leq \frac{2a}{3}-v$.
Furthermore, for the transformation $T_1$ in Section \ref{sec:decspoints} we have $T_1(R_4)=R_4$. In other words, $R_4$ also contains $\alpha'\varepsilon_1$ and $\alpha''\varepsilon_2$ where $\varepsilon_1$ and $\varepsilon_2$ are concrete totally positive units. Thus, it suffices to consider only one third of this triangle, in particular,
\[
R_4^0=\left\{\alpha(v,r);0\leq v\leq \frac{a-3}{9}-1,\frac{a}{3}+1+v\leq r\leq \frac{2a}{3}-2v-1\right\}\cup\left\{\alpha\left(\frac{a-3}{9},\frac{a}{3}+1+\frac{a-3}{9}\right)\right\}.
\]

The following lemma compares norms of elements contained in $R_4^0$. Note that the proof is almost the same as in \cite[Lemma 3.2]{tinkova}.

\begin{lema} \label{lem:comnorms}
	We have 
	\begin{enumerate}%[label=(\roman*)]
		\item $N(\alpha(v,r))<N(\alpha(v+1,r))$ for all $0\leq v \leq \frac{a-3}{9}-1$ and $\frac{a}{3}+2+v\leq r\leq \frac{2a}{3}-2v-3$,
		\item $N\big(\alpha\big(v,\frac{a}{3}+1+v\big)\big)<N\big(\alpha\big(v+1,\frac{a}{3}+1+v+1\big)\big)$ for all $0\leq v \leq \frac{a-3}{9}-1$,
		\item $N\big(\alpha\big(v,\frac{2a}{3}-2v-1\big)\big)<N\big(\alpha\big(v+1,\frac{2a}{3}-2(v+1)-1\big)\big)$ for all $0\leq v \leq \frac{a-3}{9}-2$,
		\item $N\big(\alpha\big(v,\frac{2a}{3}-2v-2\big)\big)<N\big(\alpha\big(v+1,\frac{2a}{3}-2(v+1)-2\big)\big)$ for all $0\leq v \leq \frac{a-3}{9}-2$.   
	\end{enumerate}
\end{lema}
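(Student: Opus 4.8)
The plan is to compute each of the four norm differences explicitly as a polynomial, and then show positivity on the stated range of $v$ (and the relevant border values of $r$). Concretely, I would first fix the formula for $N(\alpha(v,r))$ as a polynomial in $v$ and $r$ — this is exactly the cubic in $r$ already written out in the proof of Proposition \ref{pro:largestnorm} (the one displayed in the multi-line equation), so I can quote it rather than recompute from scratch. For part (1), I would substitute and form $D_1(v,r) = N(\alpha(v+1,r)) - N(\alpha(v,r))$; since $N$ is degree $1$ in $v$ when $r$ is held fixed? — in fact it is cubic in $v$, so $D_1$ is a quadratic in $v$ and (after simplification) linear or quadratic in $r$. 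The key observation is that on the restricted triangle $R_4^0$ the ranges of $v$ and $r$ are comparable to $a/9$ and $a/3$ respectively, so each monomial can be bounded against the dominant positive term. I expect $D_1(v,r)$ to factor or to be manifestly positive once one writes $r = \frac{a}{3} + 1 + v + j$ with $j \ge 0$ (shifting to coordinates adapted to the triangle), which turns the inequalities into statements about a polynomial in $a$, $v$, $j$ with all terms of controllable sign.

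For parts (2), (3), (4) the strategy is the same but with $r$ tied to $v$ by an explicit affine relation ($r = \frac{a}{3}+1+v$, $r = \frac{2a}{3}-2v-1$, $r = \frac{2a}{3}-2v-2$ respectively). After substituting, each left-hand side becomes a single-variable polynomial in $v$ (with $a$ as parameter), and the claimed inequality $N(\alpha(v,\cdot)) < N(\alpha(v+1,\cdot))$ becomes: the forward difference of this polynomial is positive for $0 \le v \le \frac{a-3}{9}-1$ (resp. $-2$). Since each such polynomial is cubic in $v$ with positive leading coefficient, its forward difference is a quadratic in $v$ with positive leading coefficient, so it suffices to check positivity at the two endpoints of the $v$-interval (or to locate its vertex and check it lies outside, or is dominated, on that interval). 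Because the excerpt explicitly says the proof is "almost the same as in \cite[Lemma 3.2]{tinkova}", I would organize the write-up to parallel that argument: state the four difference polynomials, note in each case that the relevant forward difference is a quadratic (or lower) in $v$ with positive leading coefficient, and verify the endpoint values are positive for $a \ge 21$ (using $a \equiv 3$ or $21 \pmod{27}$, so $a \ge 21$, and in particular $a$ is divisible by $3$ and $\frac{a-3}{9}$ is an integer when $a\equiv 3\pmod 9$, etc. — I would double-check the divisibility bookkeeping so that all the fractions appearing are genuine integers).

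The main obstacle I anticipate is purely bookkeeping: keeping the substitutions $w = v(a+2)+r$, the constraints defining $R_4^0$, and the endpoint evaluations straight, and making sure the "positive leading coefficient + check endpoints" reduction is actually valid (i.e. that the quadratic's vertex either lies outside the $v$-interval or the vertex value is still positive). There is also a small subtlety in parts (3) and (4): the range is $0 \le v \le \frac{a-3}{9}-2$ rather than $-1$, reflecting that the rightmost column of the triangle behaves differently, so I would treat the top vertex $\alpha\big(\frac{a-3}{9}, \frac{a}{3}+1+\frac{a-3}{9}\big)$ separately if needed. None of the individual computations is deep; the risk is an arithmetic slip in one of the cubic coefficients, which I would guard against by sanity-checking the formula for $N(\alpha(v,r))$ at a small explicit value of $a$ (e.g. $a=21$) against a direct norm computation before running the general argument.
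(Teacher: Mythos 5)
The paper gives no proof of this lemma at all, deferring entirely to the analogous computation in \cite[Lemma 3.2]{tinkova}, and your strategy — take the explicit cubic formula for $N(\alpha(v,r))$ already displayed in the proof of the smallest-norm proposition, form the forward differences in $v$ (which are quadratic in $v$ and in $r$), and reduce positivity on the stated ranges to a sign-of-leading-coefficient argument plus endpoint (or vertex) evaluations for $a\geq 21$ — is exactly that computation. The plan is sound, including your correct caveat that for a convex quadratic the endpoint check alone does not suffice and the vertex must be handled; what remains is only to grind out the four difference polynomials explicitly.
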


Now we have everything we need to determine the upper bound on the norm of indecomposable integers in $\O_K$.

\begin{pro}\label{pro:largestnorm}
If $\alpha$ is indecomposable in $\O_K$, then 
\[
|N(\alpha)|\leq\left\{
\begin{array}{ll} \frac{2a^3+9a^2+27a+27}{27} \text{ if } a=21, 30, 48, \\
 \frac{(a^2+3a+9)^2}{729} \text{ if } a>48,
\end{array}\right.
\]
\end{pro}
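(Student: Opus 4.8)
The plan is to walk through the classification of indecomposables in Theorem~\ref{teolistindec} type by type, using the norm estimates prepared in Lemmas~\ref{prop:normlines} and~\ref{lem:comnorms}, and to isolate the two competing quantities $\tfrac{(2a+3)\Delta}{27}$ and $\tfrac{\Delta^2}{729}$ (recall $\Delta=a^2+3a+9$). Since $|N|$ is multiplicative and Galois-invariant, replacing an indecomposable by a conjugate or by an associate does not change $|N(\alpha)|$; as the elements in (iii), (iv) are associates of conjugates of those in (ii), and (vi)--(viii) are likewise related, it suffices to bound $|N|$ on types (i), (ii), (v), (vi), exactly the reduction already recorded before Lemma~\ref{prop:normlines}.

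For type (i) the only element is $g_3$, with $|N(g_3)|=\tfrac{\Delta}{27}$, which is smaller than everything below. For type (ii), Lemma~\ref{prop:normlines}(1) gives $|N(\alpha)|\le\tfrac{2a^3+9a^2+27a+27}{27}=\tfrac{(2a+3)\Delta}{27}$, with equality at $r=\tfrac{a}{3}$; for type (vi), Lemma~\ref{prop:normlines}(2) gives the strict inequality $|N(\alpha)|<\tfrac{(2a+3)\Delta}{27}$. Thus the whole problem reduces to bounding $|N|$ on the triangle $R_4$ of type-(v) elements. For this I would use that $T_1$ maps $R_4$ onto itself by multiplying a conjugate by a totally positive unit, so $|N|$ is constant on $T_1$-orbits and $\max_{R_4}|N|=\max_{R_4^0}|N|$. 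On the fundamental domain $R_4^0$, Lemma~\ref{lem:comnorms} says $|N(\alpha(v,r))|$ strictly increases in $v$ along horizontal lines and along the three diagonal lines that matter near the boundary. A short combinatorial chase then moves every point of $R_4^0$, norm-increasingly, either to the apex $\alpha\!\left(\tfrac{a-3}{9},\tfrac{a}{3}+1+\tfrac{a-3}{9}\right)$ (via the bottom edge) or to the right-hand endpoint of one of the top two rows; evaluating the norm polynomial from the proof of Proposition~\ref{prosmallestnorm} at these finitely many points shows the maximum is attained at the apex and equals $\tfrac{(a^2+3a+9)^2}{729}=\tfrac{\Delta^2}{729}$.

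It then only remains to compare the two survivors. One has
\[
\frac{\Delta^2}{729}-\frac{(2a+3)\Delta}{27}=\frac{\Delta}{729}\bigl(\Delta-27(2a+3)\bigr)=\frac{\Delta}{729}\,(a^2-51a-72),
\]
and $a^2-51a-72>0$ exactly for $a>\tfrac{51+\sqrt{2889}}{2}\approx 52.4$. Since the fields in our family have $a\equiv3$ or $21\pmod{27}$, the only ones with $a\le 48$ are $a=21,30,48$, and for these the larger bound is $\tfrac{(2a+3)\Delta}{27}$, realised by the type-(ii) element with $r=\tfrac{a}{3}$; for $a>48$ (hence $a\ge 57$ within the family) the larger bound is $\tfrac{\Delta^2}{729}$, realised by the apex of $R_4^0$. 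This is precisely the claimed dichotomy. The main obstacle is the type-(v) step: upgrading the pairwise monotonicity of Lemma~\ref{lem:comnorms} to a clean ``the global maximum sits at the apex'' statement, and then checking that the apex norm collapses to the neat value $\tfrac{\Delta^2}{729}$ rather than to a messier cubic in $a$.
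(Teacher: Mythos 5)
Your proposal is correct and follows essentially the same route as the paper: reduce to types (i), (ii), (v), (vi), use Lemma~\ref{prop:normlines} for (ii) and (vi), exploit the $T_1$-invariance of $R_4$ together with the monotonicity in Lemma~\ref{lem:comnorms} to push the type-(v) maximum to the corner candidates of $R_4^0$, and compare the two surviving quantities. The paper does exactly this, listing the two corner candidates $\alpha\left(\tfrac{a-3}{9}-1,\tfrac{a}{3}+2+\tfrac{a-3}{9}\right)$ and the apex explicitly and then checking $\tfrac{2a^3+9a^2+27a+27}{27}<\tfrac{(a^2+3a+9)^2}{729}$ for $a\geq 53$, which matches your factorization $\tfrac{\Delta}{729}(a^2-51a-72)$.
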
 

\begin{proof}
As we have seen, the largest norm is either $\frac{2a^3+9a^2+27a+27}{27}$ found in Proposition \ref{prop:normlines} or is attained by some elements from $R_4^0$. Moreover, applying Lemma \ref{lem:comnorms}, we are left only with two candidates from $R_4^0$: $\alpha\left(\frac{a-3}{9}-1,\frac{a}{3}+2+\frac{a-3}{9}\right)$ and $\alpha\left(\frac{a-3}{9},\frac{a}{3}+1+\frac{a-3}{9}\right)$. Furthermore,
\begin{multline*}
N\left(\alpha\left(\frac{a-3}{9}-1,\frac{a}{3}+2+\frac{a-3}{9}\right)\right)=\frac{a^4+6a^3-54a^2-189a+81}{729}\\<\frac{(a^2+3a+9)^2}{729}=N\left(\alpha\left(\frac{a-3}{9},\frac{a}{3}+1+\frac{a-3}{9}\right)\right).
\end{multline*}
Moreover,
\[
\frac{2a^3+9a^2+27a+27}{27}<\frac{(a^2+3a+9)^2}{729}
\]  
for $a\geq 53$, which gives a few exceptional cases listed in the statement of the proposition.
\end{proof}

\subsection{Pythagoras number}

In this part, we will show that the Pythagoras number of $\O_K$ is $6$. Before that, let us briefly overview basic facts and results on this number. Consider a commutative ring $\O$, for which we define the following two sets: 
\[
\sum\O_K^2=\Bigg\{\sum_{i=1}^n \alpha_i^2; \alpha_i\in\O_K \text{ and }n\in\N\Bigg\}
\]
and for each $m\in\mathbb{N}$
\[
\sum^m\O_K^2=\Bigg\{\sum_{i=1}^m \alpha_i^2; \alpha_i\in\O_K\Bigg\}.
\]
Then by the Pythagoras number $\P(\O)$ of $\O$, we will mean the following infimum:
\[
\P(\O)=\inf\Bigg\{m\in\N\cup\{\infty\};\sum\O_K^2=\sum^m\O_K^2\Bigg\}.
\]
For example, $\P(\C)=\P(\R)=1$ and $\P(\Z)=\P(\Q)=4$. Although there are many results on the Pythagoras number of fields, we will only summarize results in the case when $\O$ is an order of a totally real number field $K$. 

For them, we know that the value of $\P(\O)$ is finite but can attain arbitrarily large values \cite{scharlau}. Moreover, in this case, the Pythagoras number can be bounded by a function depending on the degree $d$ of a field $K$, and, furthermore, $\P(\O)\leq d+3$ for $2\leq d\leq 5$ \cite{kalayatsyna}. So far, the Pythagoras number was fully determined for orders in real quadratic fields \cite{peters}, and we have some partial results for real biquadratic fields \cite{krasenskyraskasgallova,hehu} and totally real cubic fields \cite{krasensky,tinkovapyth, kst}. In particular, we closely follow \cite{tinkovapyth}, where the second author showed that $\P(\Z[\rho])=6$ for $\rho$ being a root of the polynomial $x^3-ax^2-(a+3)x-1$ with $a\geq 3$, i.e., for a concrete order of the simplest cubic fields.

In cubic fields, we know that $\P(\O_K)\leq 6$ by the result of Kala and Yatsyna \cite{kalayatsyna}, and we will prove that $\P(\O_K)\geq 6$ for our subfamily of non-monogenic simplest cubic fields with the base $B_3(1,1)$.
For that, we will use the method developed in \cite{tinkovapyth}, and our determination of the structure of indecomposable integers in $\O_K$. In particular, we will do the following:
\begin{enumerate}
\item We will choose a suitable element $\gamma\in\O_K$ such that $\gamma$ can be written as a sum of six non-zero squares of elements in $\O_K$. 
\item We will determine all elements $\alpha$ from Theorem \ref{teolistindec} and all units $\varepsilon\in\O_K$ such that $\gamma\succeq (\varepsilon\alpha)^2$.\label{meth:inde}
\item Using elements in (\ref{meth:inde}), we will find all elements $\omega\in\O_K$ such that $\gamma\succeq \omega^2$. \label{meth:all}
\item Considering elements in (\ref{meth:all}), we will discuss all possible decompositions of $\gamma$ as a sum of squares and show that for every such decomposition, we need at least $6$ squares.
\end{enumerate}
Note that in fact, elements $\varepsilon\alpha$ in (\ref{meth:inde}) are indecomposable integers in other signatures. Since the simplest cubic fields have units of all signatures, they can be expressed as presented here, i.e., as $\varepsilon\alpha$ where $\varepsilon$ is a (not necessarily totally positive) unit and $\alpha$ is one of totally positive indecomposables listed in Theorem \ref{teolistindec}. For more details, see, for example, \cite{tinkovapyth}.  

In our proof, we will fix $\gamma$ as 
\begin{align*}
\gamma&=1+1+1+4+\left(\frac{a+6}{3}g_1+\frac{a}{3}g_2-g_3\right)^2+\left(\frac{5a+3}{9}g_1+\frac{2a+3}{3}g_2-2g_3\right)^2\\
&=\frac{34a^2+15a+783}{81}+\frac{11a^2-29a-39}{27}\rho-\frac{11a-33}{27}\rho^2.
\end{align*}
Using estimates on $\rho,\rho'$, and $\rho''$ (see Section \ref{sect:prelim}; note that for $\rho'$, we use bounds $-1-\frac{1}{a+1}<\rho'<-1-\frac{1}{a+2}$ found in \cite{tinkovapyth}), we obtain
\begin{align*}
\gamma&<\frac{13a^2}{81}+\frac{14a}{27}+\frac{197}{27}-\frac{26}{9a}<a^2,\\
\gamma'&<\frac{a^4+40a^3+1234a^2+4596a+4725}{81(a+2)^2}<a^2,\\
\gamma''&<\frac{34a^4+186a^3+1167a^2+5178a+7497}{81(a+3)^2}<a^2
\end{align*}
for $a\geq 21$.
We see that if $\omega\in\Z$ is such that $\gamma\succeq \omega^2$, then $|\omega|<a$. In contrast with \cite{tinkovapyth}, we will not determine the precise set of elements $\omega$ satisfying $\gamma\succeq \omega^2$, but we will significantly restrict the set of such elements. As we will see, this restriction will be enough to show that we need at least $6$ squares to express $\gamma$.

First of all, in a series of lemmas, we will find all $\omega\in\O_K$ such that $\gamma\succeq \omega^2$. We will start with units and the indecomposable integer $g_3=\frac{1+\rho+\rho^2}{3}$. Recall that in the simplest cubic fields, all totally positive units are squares. Moreover, conjugates of $g_3$ are associated with $g_3$. 

\begin{lema} \label{lem:pythunitexcept}
We have
\begin{enumerate}
\item if $\gamma\succeq\varepsilon^2$ where $\varepsilon$ is a unit in $\O_K$, then $\varepsilon^2=1$,
\item if $\gamma\succeq \left(\varepsilon\frac{1+\rho+\rho^2}{3}\right)^2$ where $\varepsilon$ is a unit in $\O_K$, then $\varepsilon^2=\rho'^2\rho''^2$.
\end{enumerate}
\end{lema}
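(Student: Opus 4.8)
The plan is to exploit the fact that all totally positive units of $K$ are squares (a feature of the simplest cubic fields, as recalled in the discussion preceding the lemma and used throughout \cite{tinkovapyth}), so that asking whether $\gamma\succeq\varepsilon^2$ for a unit $\varepsilon$ is really asking which totally positive units $u$ satisfy $\gamma\succeq u$. Here $\gamma\succeq u$ means $\sigma_i(\gamma)\geq\sigma_i(u)$ for every embedding $\sigma_i$. The explicit bounds $\gamma<a^2$, $\gamma'<a^2$, $\gamma''<a^2$ computed just above the lemma give a ceiling for every conjugate of $u$. For part (1), I would argue that if $u=\rho^x(\rho')^y$ is a totally positive unit with $u\neq 1$, then by \cite[Lemma 6.2]{kalatinkova} (quoted in the proof of Proposition \ref{prosmallestnorm}) $u$ exceeds $a^2$ in some embedding, which contradicts $u\preceq\gamma\prec a^2$ in every embedding. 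Hence $u=1$, i.e. $\varepsilon^2=1$. This handles (1) cleanly.

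For part (2), write $\varepsilon\frac{1+\rho+\rho^2}{3}=\varepsilon g_3$ and square it: $\gamma\succeq\varepsilon^2 g_3^2$, i.e. $\varepsilon^2\preceq \gamma/g_3^2$ componentwise. Since $\varepsilon^2$ is again a totally positive unit, the task is to list the totally positive units $u$ with $u\preceq \gamma/g_3^2$ in all three embeddings. I would compute numerically the three values $\sigma_i(\gamma)/\sigma_i(g_3)^2$ using the standard estimates $a+1<\rho<a+1+\tfrac2a$, $-1-\tfrac1{a+1}<\rho'<-1-\tfrac1{a+2}$, $-\tfrac1{a+2}<\rho''<-\tfrac1{a+3}$ for $\rho,\rho',\rho''$ (so $g_3=\frac{1+\rho+\rho^2}{3}$ is of size $\sim a^2/3$, while $g_3'$ and $g_3''$ are small, close to $\tfrac13$). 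Consequently $\gamma/g_3^2$ is bounded above by a quantity of order $1$ in the first embedding but can be of order $a^2$ in the other two, so only units that are \emph{small} in the first embedding and can be large in the second and third are candidates. The unit $\rho'^2\rho''^2=(\rho\rho')^{-2}=(\rho+1)^{-2}$ — using $\rho\rho'=-\rho-1$ — is small in the embedding where $\rho$ is large and large in the others, matching exactly the shape of the constraint; I would check directly that its three conjugates lie below the three bounds on $\gamma/g_3^2$, and that no other totally positive unit does. To rule out all the other units, the cleanest route is: a totally positive unit $u\neq 1$ must, in at least one embedding, be at least $a^2$ (again \cite[Lemma 6.2]{kalatinkova}), and the only embedding where $\gamma/g_3^2$ is allowed to be that large is one of the last two; but then one shows the \emph{product} of the other two conjugates of $u$ (which equals the reciprocal of that large conjugate, since $N(u)=1$) is forced to be so small, while simultaneously one of them must satisfy a lower bound incompatible with the computed upper bound from $\gamma/g_3^2$, unless $u=\rho'^2\rho''^2$.

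The main obstacle will be step (2): turning ``only $u=\rho'^2\rho''^2$ works'' into a rigorous finite check. A clean way is to use the fundamental system $\{\rho,\rho'\}$ (Corollary \ref{coro:fundamentalunits}) to parametrize $u=\rho^x(\rho')^y$, take logarithms of absolute values under the three embeddings to get a linear condition $(x,y)\in\mathbb{Z}^2$ lying in a bounded polytope (bounded because all three of $\log\sigma_i(u)$ are bounded above by $\log(\sigma_i(\gamma)/\sigma_i(g_3)^2)$, and the sum of the logs is $0$, forcing a lower bound too), and then verify that the only lattice point of the polytope giving a totally positive unit is the one corresponding to $\rho'^2\rho''^2$ (equivalently $x=0$, $y=-2$ up to the choice of generators) together with — excluded by comparing with the first embedding — the trivial point. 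I would present this as an explicit estimate: derive that any such $u\neq\rho'^2\rho''^2$ forces a conjugate of $g_3^2 u$ to exceed the corresponding conjugate of $\gamma$, using the numerical bounds above, valid for $a\geq 21$. All the inequalities involved are polynomial/rational in $a$ and hold for $a\geq 21$, so the verification, while tedious, is routine once the polytope is pinned down.
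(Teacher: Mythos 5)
Your part (1) is exactly the paper's argument: all conjugates of $\gamma$ are below $a^2$, and \cite[Lemma 6.2]{kalatinkova} says any totally positive unit $\neq 1$ exceeds $a^2$ in some embedding. For part (2) your skeleton also matches the paper's (bound the three conjugates of $g_3^2$ from below, rule out $\varepsilon^2=1$ since $g_3^2>a^2>\gamma$, observe that the surviving constraints force $\varepsilon^2<1$, $\varepsilon'^2<a^2$ and $a^2<\varepsilon''^2<a^4$), but you diverge at the decisive step of turning these bounds into the single unit $\rho'^2\rho''^2$. The paper does this with one citation: \cite[Lemma 6.3]{kalatinkova} states that a totally positive unit $\varepsilon>a^2$ other than $\rho^2$ and $\rho^2\rho'^2$ either exceeds $a^4$ or has another conjugate exceeding $a^2$; combined with the estimates $a^2(g_3')^2>\gamma'$ and $a^4(g_3'')^2>\gamma''$ this leaves only $\varepsilon''^2\in\{\rho^2,\rho^2\rho'^2\}$, and $\rho^2$ is discarded because it would give $\varepsilon^2=\rho'^2>1$. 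Your proposed substitute --- enumerating lattice points of a bounded polytope in the log-unit lattice, uniformly in $a$ --- is viable in principle, but it is precisely where all the work lies and you leave it unexecuted; the intermediate verbal argument (``the product of the other two conjugates is forced to be so small, while one of them must satisfy a lower bound incompatible\dots'') does not by itself single out one unit, since infinitely many totally positive units have one conjugate in $(a^2,a^4)$. If you do not want to re-derive the classification, you should invoke \cite[Lemma 6.3]{kalatinkova} as the paper does.

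One small algebraic slip: from $\rho\rho'\rho''=1$ you get $\rho'^2\rho''^2=\rho^{-2}$, not $(\rho\rho')^{-2}$; the latter equals $\rho''^2=(1+\rho)^{-2}$, which is a different unit (its conjugate pattern is small/large/$\approx 1$ rather than small/$\approx 1$/large). This does not derail your plan, since either way the relevant unit is small in the first embedding and of size about $a^2$ in exactly one other, but the identity as written is false and should be corrected before you rely on it in the numerical verification.
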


\begin{proof}
Recall from Corollary \ref{coro:fundamentalunits} that a system of fundamental units is formed by the pair $\rho$ and $\rho'$. Thus, we can use \cite[Lemma 6.2]{kalatinkova}, which says that if $\varepsilon^2\neq 1$ is a unit in $\O_K$, then $\varepsilon^2$ has a conjugate greater than $a^2$. Since $\gamma$ has all conjugates smaller than $a^2$, no totally positive unit except for $1$ can be totally smaller than $\gamma$.  

Considering the second part of the statement, we see that
\begin{align*}
\left(\frac{1+\rho+\rho^2}{3}\right)^2&>\frac{(a^2+3a+3)^2}{9}>a^2,\\
\left(\frac{1+\rho'+\rho'^2}{3}\right)^2&>\frac{1}{9}\left(\frac{(a+3)^2}{(a+2)^2}-\frac{1}{a+1}\right)^2,\\
\left(\frac{1+\rho''+\rho''^2}{3}\right)^2&>\frac{1}{9}\left(1-\frac{1}{a+2}+\frac{1}{(a+3)^2}\right)^2.
\end{align*}
Thus, we can immediately exclude that $\varepsilon^2=1$.
Moreover, it can be easily shown that
\[
a^2\left(\frac{1+\rho'+\rho'^2}{3}\right)^2>\gamma' \qquad \text{ and }
\qquad a^4\left(\frac{1+\rho''+\rho''^2}{3}\right)^2>\gamma''.
\]
Here, we use \cite[Lemma 6.3]{kalatinkova}. It says that if $\varepsilon>a^2$ is a totally positive unit such that $\varepsilon\neq \rho^2,\rho^2\rho'^2$, then either $\varepsilon>a^4$, or one of $\varepsilon'$ and $\varepsilon''$ is greater than $a^2$. 
Together with previous estimates, it implies $\varepsilon''^2=\rho^2,\rho^2\rho'^2$ in our case, from which we can exclude $\varepsilon''^2=\rho^2$, which does not have $\varepsilon^2<1$.
\end{proof} 

We will proceed with indecomposables from $\Z[\rho]$.

\begin{lema} \label{lem:pythmono}
Let $\alpha$ be as in (ii), (iii) or (iv) of Theorem \ref{teolistindec}. Then there is no unit $\varepsilon\in\O_K$ such that $\gamma\succeq (\varepsilon\alpha)^2$. 
\end{lema}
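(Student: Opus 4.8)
The plan is to reduce the statement to an explicit inequality about conjugates of $\alpha$ and $\gamma$, using the fact that we have tight bounds on $\rho,\rho',\rho''$ and hence on the conjugates of the indecomposables in (ii). Since the elements in (iii) and (iv) are associated with conjugates of elements in (ii) (by the transformation $T_1$ from Section~\ref{sect:t1t2}, which multiplies a conjugate by a totally positive unit), and multiplication by a unit $\varepsilon$ is harmless, it suffices to treat the case that $\alpha$ is one of the elements in Theorem~\ref{teolistindec}~(ii), i.e. $\alpha = -r\rho+\rho^2$ with $1\leq r\leq \frac{a}{3}$. Thus I would first remark that $\gamma\succeq(\varepsilon\alpha)^2$ for a unit $\varepsilon$ is equivalent to $\varepsilon^2\alpha^2\preceq\gamma$, so I need to show no totally positive unit $\eta=\varepsilon^2$ satisfies $\eta\,\alpha^2\preceq\gamma$.

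The key step is to estimate the conjugates of $\alpha^2$ from below and compare with the already-computed bounds $\gamma<a^2$, $\gamma'<a^2$, $\gamma''<a^2$. Using $a+1<\rho<a+1+\frac2a$ one gets, for $1\leq r\leq\frac a3$, that $\sigma_1(\alpha)=\rho(\rho-r)$ is of size roughly $a^2$ to $a^2\cdot\frac23$; more precisely $\sigma_1(\alpha)^2$ is already comparable to or larger than $a^2$ only for $r$ near $1$, so I expect the useful inequality will come from one of the other two embeddings. Using $-1-\frac1a<\rho'<-1-\frac1{2a}$ we get $\sigma_2(\alpha)=\rho'(\rho'-r)$ of size roughly $r+1$, hence $\sigma_2(\alpha)^2\approx(r+1)^2$, which is small; and $\sigma_3(\alpha)=\rho''(\rho''-r)\approx \frac{r}{a}$ is tiny. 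So a bare $\alpha^2$ is \emph{not} totally larger than $\gamma$ — which is exactly why a unit multiple could in principle occur, and why the lemma needs the unit-size results \cite[Lemma 6.2, 6.3]{kalatinkova}. The argument I would run is: if $\eta\alpha^2\preceq\gamma$ with $\eta\neq1$, then by \cite[Lemma 6.2]{kalatinkova} $\eta$ has a conjugate exceeding $a^2$; pairing this with the smallness of the corresponding conjugate of $\alpha^2$ I would still get $\sigma_i(\eta\alpha^2)>a^2>\sigma_i(\gamma)$ for that index $i$, a contradiction. The case $\eta=1$ is dispatched by checking directly that $\alpha^2\not\preceq\gamma$: since $N(\alpha)=2a+3$ or larger and $N(\gamma)$ is of order $a^4/\text{const}$, one compares, say, $\sigma_1(\alpha^2)$ against $\sigma_1(\gamma)\approx \frac{13a^2}{81}$, and for $r=1$, $\sigma_1(\alpha)=\rho(\rho-1)>a^2$ forces $\sigma_1(\alpha^2)>a^4>\gamma$, handling small $r$; for larger $r$ one instead uses the second or third conjugate where $\gamma',\gamma''$ are still bounded by $a^2$ but the product of $\sigma_j(\eta)$ and $\sigma_j(\alpha)^2$ is forced up.

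The main obstacle is the bookkeeping: for each range of $r$ one has to pick the right embedding $i$ for which $\sigma_i(\eta)$ is forced to be large (using that $\eta$, being a totally positive unit $\neq1$, has exactly one conjugate $>a^2$ in the "generic" case, and the refined trichotomy of \cite[Lemma 6.3]{kalatinkova} for the exceptional units $\rho^2,\rho^2\rho'^2$), and then verify $\sigma_i(\eta)\,\sigma_i(\alpha)^2>\sigma_i(\gamma)$. The delicate point is the exceptional units $\eta\in\{\rho^2,\rho^2\rho'^2,\rho'^2,\rho'^2\rho''^2,\rho''^2,\rho''^2\rho^2\}$ (and their conjugates), for which Lemma~6.2 alone is not enough; these must be checked one at a time against the explicit bounds on $\gamma,\gamma',\gamma''$ exactly as was done for $g_3$ in Lemma~\ref{lem:pythunitexcept}. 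I would organize the proof as: (1) reduce to $\alpha$ as in (ii) via $T_1$ and unit-invariance; (2) dispose of $\eta=1$ by a direct conjugate comparison; (3) for $\eta\neq1$ not exceptional, combine \cite[Lemma 6.2]{kalatinkova} with the conjugate bounds on $\gamma$; (4) handle the finitely many exceptional $\eta$ by direct computation using the estimates on $\rho,\rho',\rho''$ already recorded. Each sub-step is a routine but slightly fiddly estimate, so in the write-up I would state the governing inequalities and leave the arithmetic verification to the reader or to a short computation, as is done elsewhere in this section.
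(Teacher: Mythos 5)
Your overall skeleton (reduce to the elements in (ii) via $T_1$, constrain the possible totally positive units using the size results of \cite[Lemmas 6.2 and 6.3]{kalatinkova}, and check a short list of exceptional units directly) matches the paper's, and the reduction of (iii) and (iv) to (ii) is fine. But there are two concrete problems in the middle of your argument. First, a computational slip: for every $1\le r\le\frac a3$ one has $\sigma_1(\alpha)=\rho(\rho-r)\ge(a+1)\bigl(\tfrac{2a}{3}+1\bigr)$, so $\sigma_1(\alpha)^2\ge\frac{(2a^2+4a+3)^2}{9}>a^2>\gamma$ for \emph{all} $r$ in the range, not only for $r$ near $1$. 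This single inequality already disposes of $\eta=1$ and forces $\varepsilon^2<1$ at the first embedding; your proposed case split over ranges of $r$, with ``the useful inequality coming from one of the other two embeddings,'' is looking in the wrong place.

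Second, and more seriously, the central claim of your step (3) is false as stated: if $\eta\neq1$ is a non-exceptional totally positive unit with $\sigma_i(\eta)>a^2$, you cannot conclude $\sigma_i(\eta\alpha^2)>a^2$, because the corresponding conjugate of $\alpha^2$ can be as small as $\frac{r^2}{(a+3)^2}\sim a^{-2}$ (the third embedding), so the product may be of order $1$ and well below $\gamma''<a^2$. Knowing only that \emph{some} conjugate of $\eta$ exceeds $a^2$ is not enough; one must control \emph{how large} it is. The paper's proof does this by first deducing $\varepsilon^2<1$ and $\varepsilon'^2<a^2$ (hence, by Lemma 6.2, $\varepsilon''^2>a^2$), then observing that $\frac{r^2a^4}{(a+3)^2}>\gamma,\gamma',\gamma''$ forces $\varepsilon''^2<a^4$, and only then invoking the trichotomy of \cite[Lemma 6.3]{kalatinkova} to conclude $\varepsilon''^2\in\{\rho^2,\rho^2\rho'^2\}$; these two units are then excluded by direct computation. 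You mention Lemma 6.3 only parenthetically and attach it to a guessed list of exceptional units; without the upper bound $\varepsilon''^2<a^4$ and the resulting reduction to exactly two candidates, your argument does not close.
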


\begin{proof}
Let $\alpha=-r\rho+\rho^2$ where $1\leq r\leq \frac{a}{3}$, i.e., $\alpha$ is one of elements in (ii).
In this case, we have 
\begin{align*}
(-r\rho+\rho^2)^2&>((a+1)^2-r(a+2))^2\geq \frac{(2a^2+4a+3)^2}{9}>a^2,\\
(-r\rho'+\rho'^2)^2&>(r+1)^2,\\
(-r\rho''+\rho''^2)^2&>\frac{r^2}{(a+3)^2}.
\end{align*}
Therefore, we must have $\varepsilon^2<1$, $\varepsilon'^2<a^2$ and $\varepsilon''^2>a^2$, which again follows from \cite[Lemma~6.2]{kalatinkova}.
Moreover, easily, $\frac{r^2a^4}{(a+3)^2}>\gamma,\gamma',\gamma''$. Therefore, again by \cite[Lemma 6.3]{kalatinkova}, we can conclude that $\varepsilon''^2=\rho^2,\rho^2\rho'^2$. However, for both of these units, we have $(\varepsilon\alpha)^2>\gamma,\gamma',\gamma''$.

Note that by this, we have also excluded Cases (iii) and (iv). It follows from the fact that elements in (iii) and (iv) are associated with conjugates in (ii), and, thus, $(\varepsilon\alpha)^2>\gamma,\gamma',\gamma''$ also covers elements originating from indecomposables in (iii) and (iv).   
\end{proof}

Now we will discuss the other three lines of indecomposables. 

\begin{lema} \label{lem:pythline}
Let $a\geq 55$.
Let $\alpha$ be as in (vi), (vii) or (viii) of Theorem \ref{teolistindec}, and let $\gamma\succeq (\varepsilon\alpha)^2$ for some unit $\varepsilon$. Then $\alpha$ is as in (vi) and $\varepsilon^2=\rho'^2\rho''^2$.  
\end{lema}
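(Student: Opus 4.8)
The plan is to mimic the structure of the previous lemmas (Lemmas~\ref{lem:pythunitexcept} and~\ref{lem:pythmono}), exploiting that all the indecomposables in (vi), (vii), (viii) are mutually associated via conjugation, so it suffices to treat only the family in (vi), namely $\alpha=-(r+1)g_2+g_3=1-r\rho+\rho^2-2g_3$ with $0\leq r\leq\frac{a}{3}-1$. First I would compute, using the estimates $a+1<\rho<a+1+\frac2a$, $-1-\frac1{a+1}<\rho'<-1-\frac1{a+2}$, $-\frac1{a+2}<\rho''<-\frac1{a+3}$ recorded in Section~\ref{sect:prelim} and \cite{tinkovapyth}, upper and lower bounds for each of the three conjugates $\sigma_i(\alpha)^2$. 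The key qualitative facts I expect are: one conjugate of $\alpha^2$ is $>a^2$ (the one coming from the large embedding $\rho\approx a$), one conjugate is comparatively small (bounded by a constant or by $O(a^0)$), and the third is of intermediate size. From $\gamma\succeq(\varepsilon\alpha)^2$ and the bounds $\gamma,\gamma',\gamma''<a^2$ already established before the lemma, I deduce constraints on the signature/size of $\varepsilon^2$ at each embedding: at the embedding where $\alpha^2>a^2$ we need $\varepsilon^2$ small there, at the embedding where $\alpha^2$ is small we may need $\varepsilon^2$ large, etc.

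Next I would invoke the unit lemmas from \cite{kalatinkova} exactly as in the proof of Lemma~\ref{lem:pythmono}: \cite[Lemma 6.2]{kalatinkova} forces that if $\varepsilon^2\neq1$ then some conjugate of $\varepsilon^2$ exceeds $a^2$, and \cite[Lemma 6.3]{kalatinkova} restricts the possibilities once one knows $\varepsilon^2>a^2$ at a given place, narrowing $\varepsilon^2$ down to lie among $\{\rho^2,\rho^2\rho'^2\}$ up to conjugacy (equivalently $\{\rho^2,\rho'^2,\rho''^2,\rho^2\rho'^2,\rho'^2\rho''^2,\rho^2\rho''^2\}$ among all conjugates). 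Then for each of these finitely many candidate units $\varepsilon^2$ I would check directly whether $(\varepsilon\alpha)^2\preceq\gamma$ can hold, ruling out all of them except $\varepsilon^2=\rho'^2\rho''^2$, and for that surviving one verifying it indeed satisfies $(\varepsilon\alpha)^2\preceq\gamma$ only when $\alpha$ is as in (vi) (not (vii) or (viii)). Since elements in (vii) and (viii) are associated with conjugates of elements in (vi), I expect the bound $(\varepsilon\alpha)^2>\gamma,\gamma',\gamma''$ obtained for the conjugate embeddings automatically to dispose of (vii) and (viii), just as in Lemma~\ref{lem:pythmono}; this is where the hypothesis $a\geq55$ will be used to make the comparisons of explicit rational functions of $a$ go through cleanly.

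The main obstacle, as in \cite[Lemma 3.2]{tinkova} and the earlier Pythagoras-number lemmas, is purely computational bookkeeping: one must carefully track how $\sigma_i(\alpha)$ depends on $r$ (the dangerous cases are the extreme values $r=0$ and $r=\frac a3-1$, where $\sigma_i(\alpha)$ may be smallest and hence the inequality $\sigma_i((\varepsilon\alpha)^2)\le\sigma_i(\gamma)$ is hardest to violate), and for the surviving unit $\varepsilon^2=\rho'^2\rho''^2$ one must confirm $(\varepsilon\alpha)^2\preceq\gamma$ at \emph{all} three places uniformly in $r$. I would handle the $r$-dependence by noting monotonicity of each conjugate of $\alpha$ in $r$ on the relevant interval, reducing to finitely many endpoint checks, and then bounding the resulting rational functions of $a$ against $\gamma,\gamma',\gamma''$ using the explicit formula for $\gamma$ and its conjugates displayed just before the lemma. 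No new idea beyond the scheme of \cite{tinkovapyth} and the preceding lemmas is needed; the novelty is only in carrying the estimates through for this particular $\gamma$ and these three lines of indecomposables.
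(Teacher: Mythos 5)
Your proposal follows essentially the same route as the paper's proof: reduce to the line (vi) because (vii) and (viii) are associated with its conjugates, bound the three conjugates of $\alpha^2$ against $\gamma,\gamma',\gamma''<a^2$, invoke \cite[Lemmas 6.2 and 6.3]{kalatinkova} to pin the unit down to $\varepsilon^2=\rho'^2\rho''^2$, and then dispose of the two conjugate embeddings (the paper does this by a case split at $r=\frac{a}{6}$, which your monotonicity-and-endpoints bookkeeping would uncover). The only cosmetic difference is that you propose to \emph{verify} that the surviving case actually satisfies $(\varepsilon\alpha)^2\preceq\gamma$, which the lemma (a one-directional implication) does not require and the paper does not do.
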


\begin{proof}
Let $\alpha=-(r+1)g_2+g_3$ where $0\leq r\leq \frac{a}{3}-1$, i.e., $\alpha$ is one of elements in (vi). Then we have
\begin{align*}
\alpha^2&>\left(\frac{a^2}{3}-\frac{4}{3a}-r\Big(a+1+\frac{2}{a}\Big)\right)^2>\left(\frac{2a}{3}+\frac{1}{3}+\frac{2}{3a}\right)^2>\gamma,\gamma',\gamma'',\\
\alpha'^2&>\left(\frac{4}{3}+\frac{4}{3(a+2)}+\frac{1}{3(a+2)^2}+r\Big(1+\frac{1}{a+2}\Big)\right)^2>1,\\
\alpha''^2&>\left(\frac{1}{3}+\frac{2}{3(a+3)}+\frac{1}{3(a+3)^2}+\frac{r}{a+3}\right)^2.
\end{align*}
Moreover, $a^4\alpha''^2>\gamma,\gamma',\gamma''$. 
It implies that again $\varepsilon''^2=\rho^2,\rho^2\rho'^2$ by \cite[Lemma 6.3]{kalatinkova}. We can exclude $\varepsilon''^2=\rho^2$ as $\varepsilon^2>1$. 

Now we will discuss all three conjugates of $(\rho'\rho''\alpha)^2$.
Since $(\rho\rho'\alpha'')^2>\gamma'$, the conjugate $(\rho''\rho\alpha')^2$ cannot be totally smaller than $\gamma$. If $r\geq \frac{a}{6}$, then $(\rho\rho'\alpha'')^2>\gamma$, and if $r<\frac{a}{6}$, then $(\rho'\rho''\alpha)^2>\gamma'$ for $a\geq 55$. This excludes the conjugate $(\rho\rho'\alpha'')^2$. Thus, we are left only with $(\rho'\rho''\alpha)^2$, which states the lemma.

Note that similarly as in the proof of Lemma \ref{lem:pythmono}, conjugates of elements in (vi) are associated with elements in (vii) and (viii), and in this proof, we have discussed all conjugates of $\alpha$ at the same time.     
\end{proof}

In the end, we will look at the triangle of indecomposables in (v). For that, let us denote
\[
\alpha(v,r)=-(2v+1)g_1-(v(a+3)+r+1)g_2+(3v+2)g_3
\]
where $0\leq v\leq \frac{a}{3}-1$ and $\frac{a}{3}+1\leq r\leq \frac{2a}{3}-v$. 

\begin{lema} \label{lem:pythtriangle}
Let $a\geq 63$. Moreover, let $\gamma\succeq (\varepsilon\alpha(v,r))^2$ for some unit $\varepsilon\in\O_K$, $0\leq v\leq \frac{a}{3}-1$ and $\frac{a}{3}+1\leq r\leq \frac{2a}{3}-v$. Then either
\begin{enumerate}
\item $(\varepsilon\alpha(v,r))^2=\rho''^2\rho^2\alpha(0,r)'^2$ for some $\frac{a}{3}+1\leq r< \frac{5a-6}{12}$, or
\item $(\varepsilon\alpha(v,r))^2=\rho^2\rho'^2\alpha(0,r)''^2$ for some $\frac{5(a-3)}{9}\leq r\leq \frac{2a}{3}-1$.  
\end{enumerate}
\end{lema}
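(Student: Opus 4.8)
The plan is to mimic the structure of the proofs of Lemmas \ref{lem:pythmono} and \ref{lem:pythline}: first establish which conjugate of $\varepsilon^2$ must be ``large'' by bounding the three conjugates of $\alpha(v,r)^2$, then use \cite[Lemma 6.2]{kalatinkova} and \cite[Lemma 6.3]{kalatinkova} to pin down $\varepsilon^2$ to one of $\rho^2$ or $\rho^2\rho'^2$, discard $\rho^2$, and finally decide among the three conjugates of $(\rho'\rho''\alpha)^2$ (equivalently among the rows $T_1(R_4)=R_4$) which one can actually be totally below $\gamma$. Since $T_1(R_4^0)$, $T_1^2(R_4^0)$ and $R_4^0$ together exhaust $R_4$, and the three conjugates $\alpha$, $\alpha'\varepsilon_1$, $\alpha''\varepsilon_2$ of a given indecomposable permute these three thirds, it suffices to take $\alpha=\alpha(0,r)$ with $\frac{a}{3}+1\le r\le \frac{2a}{3}$ (the case $v=0$) and to track which of its conjugates, after multiplication by the appropriate totally positive unit, lands below $\gamma$; that is exactly what conclusions (1) and (2) record, with $\varepsilon^2=\rho''^2\rho^2$ corresponding to $\alpha(0,r)'$ and $\varepsilon^2=\rho^2\rho'^2$ to $\alpha(0,r)''$.

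First I would compute, using the estimates $a+1<\rho<a+1+\frac2a$, $-1-\frac1{a+1}<\rho'<-1-\frac1{a+2}$, $-\frac1{a+2}<\rho''<-\frac1{a+3}$ from Section \ref{sect:prelim} and \cite{tinkovapyth}, the three conjugates of $\alpha(0,r)=-g_1-(r+1)g_2+3g_3=-r\rho+\rho^2$. One gets $\alpha(0,r)=(a+1)^2-r(a+1)+O(1)$, which is of size $\asymp a^2$ (it is smallest near $r=\frac{2a}{3}$ but still $\gg a$), while $\alpha(0,r)'\asymp r+1$ and $\alpha(0,r)''\asymp \frac{r}{a+3}$ are both $O(a)$, in fact $\alpha(0,r)''=O(1)$. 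Squaring, $\alpha(0,r)^2\gg a^2>\gamma,\gamma',\gamma''$, so $\varepsilon^2<1$; and $a^4\alpha(0,r)''^2\gg \gamma,\gamma',\gamma''$ while $\alpha(0,r)''^2$ itself is tiny, so the conjugate of $\varepsilon^2$ matched against $\alpha(0,r)''$ must be $>a^4$ or have a further conjugate $>a^2$. Then \cite[Lemma 6.3]{kalatinkova} forces $\varepsilon''^2\in\{\rho^2,\rho^2\rho'^2\}$, and $\rho^2$ is excluded because $\varepsilon^2<1$ fails for it. So $\varepsilon^2=\rho^2\rho'^2$ up to relabelling conjugates, i.e. $(\varepsilon\alpha(0,r))^2$ equals one of $(\rho'\rho''\alpha(0,r))^2$, $(\rho''\rho\,\alpha(0,r)')^2$, $(\rho\rho'\alpha(0,r)'')^2$.

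Next I would rule out the ``wrong'' third and then split the surviving third into the two ranges of $r$. Exactly as in Lemma \ref{lem:pythline}, comparing $(\rho\rho'\alpha(0,r)'')^2$ against $\gamma'$ shows $(\rho''\rho\,\alpha(0,r)')^2$ cannot be totally below $\gamma$ unless $r$ is small, and comparing $(\rho'\rho''\alpha(0,r))^2$ against $\gamma'$ shows $(\rho\rho'\alpha(0,r)'')^2$ cannot be totally below $\gamma$ unless $r$ is large; carrying out these two quadratic-in-$r$ comparisons with the explicit expressions for $\gamma,\gamma',\gamma''$ yields precisely the thresholds $r<\frac{5a-6}{12}$ in case (1) and $r\ge\frac{5(a-3)}{9}$ in case (2), and the hypothesis $a\ge 63$ is exactly what makes the discarded inequalities strict. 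Rewriting $\rho''^2\rho^2\alpha(0,r)'^2$ as coming from an element $\alpha(v,r)$ with $v>0$ via $T_1$ (and similarly $\rho^2\rho'^2\alpha(0,r)''^2$ via $T_1^2$) gives the stated form. The main obstacle I anticipate is purely computational bookkeeping: getting the constant in the threshold right requires comparing the leading quadratic coefficients in $r$ of $\alpha(0,r)'^2$ (resp. $\alpha(0,r)''^2$) with those of the conjugates of $\gamma$ carefully enough that the crossover lands on the clean values $\frac{5a-6}{12}$ and $\frac{5(a-3)}{9}$, and to double-check that at $v=0$ the boundary indices $r=\frac{a}{3}+1$ and $r=\frac{2a}{3}$ behave as claimed; the unit-theoretic part is a routine reprise of Lemmas \ref{lem:pythmono}–\ref{lem:pythline}.
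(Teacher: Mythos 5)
Your overall skeleton (bound the three conjugates of $\alpha(v,r)^2$, invoke \cite[Lemmas 6.2, 6.3]{kalatinkova} to restrict $\varepsilon^2$, then sort the surviving conjugates by the range of $r$) matches the paper's, but two essential steps are missing or wrong. First, the reduction to $v=0$ does not follow from the $T_1$-symmetry as you claim. The fundamental domain $R_4^0$ for the action of $T_1$ on the triangle $R_4$ is a two-dimensional third of the triangle (roughly $0\leq v\leq\frac{a-3}{9}-1$, $\frac{a}{3}+1+v\leq r\leq\frac{2a}{3}-2v-1$), containing on the order of $a^2$ points, not the line $v=0$; the statement that only $v=0$ elements survive is the substantive content of the lemma, and the paper proves it by comparing $\mathrm{Tr}\bigl(\varepsilon^2\alpha(v,r)^2\bigr)$ with $\mathrm{Tr}(\gamma)=\frac{16a^2-24a+981}{27}$ for each candidate unit and showing all of them exceed $\mathrm{Tr}(\gamma)$ once $v\geq1$. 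Your proposal simply assumes this reduction, so it proves much less than the lemma asserts.

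Second, the unit-elimination step is not a ``routine reprise'' of Lemmas \ref{lem:pythmono} and \ref{lem:pythline}. For the triangle elements one only has $\alpha(v,r)''^2\geq\frac{(a+5)^2}{9(a+3)^4}$, so the usable upper bound is $\varepsilon''^2<a^6$ rather than $<a^4$; consequently \cite[Lemma 6.3]{kalatinkova} leaves \emph{five} candidate units ($\varepsilon''^2=\rho^2\rho'^2,\rho^2\rho'^4,\rho^4\rho'^2,\rho^4\rho'^4,\rho^4\rho'^6$ in the paper's notation), not the two you list, and four of them must be killed by further trace comparisons plus, at the boundary value $r=\frac{a}{3}+1$, by checking the sign of the second symmetric function of the conjugates of $\gamma-\beta$. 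None of this appears in your outline. A smaller but real slip: the element $\alpha(0,r)$ in this lemma is $-g_1-(r+1)g_2+2g_3=-r\rho+\rho^2-g_3$ (from Theorem \ref{teolistindec}\,(v)), not $-g_1-(r+1)g_2+3g_3=-r\rho+\rho^2$ (that is the element from (ii), already handled in Lemma \ref{lem:pythmono}); your conjugate estimates are therefore for the wrong element, and the correct first conjugate is of size roughly $\frac{2a^2}{3}-r(a+1)$ rather than uniformly $\asymp a^2$. The final sorting of the three conjugates of $\rho'^2\rho''^2\alpha(0,r)^2$ into the ranges $r<\frac{5a-6}{12}$ and $r\geq\frac{5(a-3)}{9}$ is in the right spirit, but it rests on the unproved reductions above.
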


\begin{proof}
As before, it suffices to restrict to a subset of the triangle of indecomposables in (v). In particular, we can discuss only elements in $R_4^0$.
Thus, let $\alpha(v,r)\in R_4^0$, i.e., $0\leq v\leq \frac{a-3}{9}-1$ and $\frac{a}{3}+1+v\leq r\leq \frac{2a}{3}-2v-1$, or $v=\frac{a-3}{9}$ and $r=\frac{a}{3}+1+\frac{a-3}{9}$. Then
\begin{align*}
\alpha(v,r)^2&>\left(\frac{2a^2}{3}+a-\frac{2}{3a}-v\Big(a+4+\frac{4}{a}\Big)-r\Big(a+1+\frac{2}{a}\Big)\right)^2\geq\left(\frac{4a}{3}-\frac{1}{3}+\frac{4}{3a}\right)^2>a^2,\\
\alpha(v,r)'^2&>\left(\frac{2}{3}+\frac{5}{3(a+2)}+\frac{2}{3(a+2)^2}+v\Bigg(a+2+\frac{a+4}{a+2}+\frac{1}{(a+2)^2}\Bigg)+r\Bigg(1+\frac{1}{a+2}\Bigg)\right)^2\\
&\hspace{5cm}\geq \left(\frac{a+5}{3}+\frac{a+8}{3(a+2)}+\frac{2}{3(a+2)^2}\right)^2>1,\\
\alpha(v,r)''^2&>\left(-\frac{1}{3}+\frac{1}{3(a+3)}+\frac{2}{3(a+3)^2}+v\Bigg(-1+\frac{a+2}{a+3}+\frac{1}{(a+3)^2}\Bigg)+\frac{r}{a+3}\right)^2
\geq \frac{(a+5)^2}{9(a+3)^4}.
\end{align*}
Furthermore, $a^6\alpha(v,r)''^2>\gamma,\gamma',\gamma''$. That means that our unit has to satisfy $\varepsilon^2<1$, $\varepsilon'^2<a^2$ and $a^2<\varepsilon''^2<a^6$. This is fulfilled by units $\varepsilon''^2=\rho^2\rho'^2,\rho^2\rho'^4,\rho^4\rho'^2,\rho^4\rho'^4,\rho^4\rho'^6$. 

Now, we will use the fact that if $\gamma\succeq\omega^2$ for some $\omega\in\O_K$, then necessarily $\text{Tr}(\gamma)\geq \text{Tr}(\omega^2)$.
It can be computed that $\text{Tr}(\gamma)=\frac{16a^2-24a+981}{27}$. Moreover, we have
\begin{align*}
\text{Tr}(\rho'^2\rho''^4\alpha(v,r)^2)&\geq \frac{a^4+12a^3+56a^2+132a+153}{9}>\text{Tr}(\gamma),\\
\text{Tr}(\rho'^4\rho''^6\alpha(v,r)^2)&\geq \frac{a^4+10a^3+41a^2+96a+126}{9}>\text{Tr}(\gamma),
\end{align*} 
so we can exclude $\varepsilon^2=\rho'^2\rho''^4,\rho'^4\rho''^6$. Moreover, if $v\geq 1$, then
\begin{align*}
\text{Tr}(\rho'^2\rho''^2\alpha(v,r)^2)&\geq \frac{17a^2+96a+369}{9}>\text{Tr}(\gamma),\\
\text{Tr}(\rho'^4\rho''^2\alpha(v,r)^2)&\geq \frac{26a^2+6a+153}{9}>\text{Tr}(\gamma),\\
\text{Tr}(\rho'^4\rho''^4\alpha(v,r)^2)&\geq \frac{41a^2+150a+234}{9}>\text{Tr}(\gamma),
\end{align*}
by which we have excluded all cases with $v\geq 1$.

Let us now focus on the case when $v=0$. If $r\geq \frac{a}{3}+2$, then
\begin{align*}
\text{Tr}(\rho'^4\rho''^2\alpha(0,r)^2)&\geq \frac{26a^2+42a+126}{9}>\text{Tr}(\gamma),\\
\text{Tr}(\rho'^4\rho''^4\alpha(0,r)^2)&\geq \frac{26a^2+114a+234}{9}>\text{Tr}(\gamma).
\end{align*}
If $r=\frac{a}{3}+1$, we obtain
\begin{equation} \label{eq:secondcoef}
(\gamma-\beta)(\gamma'-\beta')+(\gamma-\beta)(\gamma''-\beta'')+(\gamma'-\beta')(\gamma''-\beta'')<0
\end{equation}
where $\beta$ is any conjugate of $\rho'^4\rho''^2\alpha\big(0,\frac{a}{3}+1\big)^2$ or $\rho'^4\rho''^4\alpha\big(0,\frac{a}{3}+1\big)^2$. Note that (\ref{eq:secondcoef}) gives the second coefficient in the minimal polynomial of $\gamma-\beta$, which has to be nonnegative if $\gamma-\beta\succeq 0$. By this, we have excluded the units $\varepsilon^2=\rho'^4\rho''^2,\rho'^4\rho''^4$.

Thus, it remains to discuss the unit $\varepsilon^2=\rho'^2\rho''^2$. We have $\rho''^2\rho^2\alpha(0,r)'^2>\gamma'$ for all $\frac{a}{3}+1\leq r\leq \frac{2a}{3}-1$, which excludes the conjugate $\rho'^2\rho''^2\alpha(0,r)^2$. 

Let us now focus on the conjugate $\rho''^2\rho^2\alpha(0,r)'^2$. If $r\geq \frac{5a-6}{12}$, we have
\[
\rho''^2\rho^2\alpha(0,r)'^2>\frac{1}{(a+3)^2}(a+1)^2\frac{(5a^3+27a^2+52a+44)^2}{144(a+2)^4}>\gamma
\] 
for $a\geq 63$. Therefore, we must have $\frac{a}{3}+1\leq r< \frac{5a-6}{12}$, which gives the first case in our statement.

For the conjugate $\rho^2\rho'^2\alpha(0,r)''^2$, we similarly see that for $\frac{a}{3}+1\leq r\leq \frac{5(a-3)}{9}-1$, we obtain
\[
\rho'^2\rho''^2\alpha(0,r)^2>\left(1+\frac{1}{a+2}\right)^2\frac{1}{(a+3)^2}\frac{(a^3+28a^2+14a+42)^2}{81a^2}>\gamma'
\]
for $a\geq 34$. This implies $\frac{5(a-3)}{9}\leq r\leq \frac{2a}{3}-1$, i.e., the second case in our statement.  
\end{proof}

In Lemmas \ref{lem:pythunitexcept}, \ref{lem:pythmono}, \ref{lem:pythline} and \ref{lem:pythtriangle}, we have found some of the squares totally smaller than $\gamma$. There can exist other squares $\omega^2\in\O_K$ satisfying $\gamma\succeq \omega^2$; however, these squares can be determined from elements derived so far. To see this, we recall the following notions: 
\begin{itemize}
    \item By the signature of an element $\alpha\in K$, we mean the triple $(\text{sgn}(\alpha),\text{sgn}(\alpha'),\text{sgn}(\alpha''))$, where $\text{sgn}$ is the signum function. We will replace $1$ and $-1$ in this triple by symbols $+$ and $-$.
    \item For a signature $\sigma$, an element in $\mathcal{O}_K$ is $\sigma$-indecomposable if it cannot be written as a sum of two elements in $\mathcal{O}_K$ with signature $\sigma$ (see, for instance, \cite{tinkovapyth}).
\end{itemize} 
Every $\sigma$-decomposable integer $\omega\in\mathcal{O}_K$ is a sum of $\sigma$-indecomposable elements, say $\omega=\sum_{i=1}^n \beta_i$, each of which is associated with a totally positive indecomposable in Theorem \ref{teolistindec} (since simplest cubic fields have units of all signatures). Now, if we assume that $\gamma\succeq\omega^2$, then $\gamma\succeq\beta_i^2$, so the elements $\beta_i$ in this decomposition are square roots of squares found in the above lemmas.

Therefore,
\[
\gamma\succeq \omega^2=\left(\sum_{i=1}^n\beta_i\right)^2\succeq (\beta_i+\beta_j)^2\succeq \beta_i^2
\]
for every $1\leq i,j\leq n$. Thus, to determine all the remaining squares totally smaller than $\gamma$, it is enough for each possible signature 
\begin{enumerate}[(1)]
    \item to take all square roots of squares from Lemmas \ref{lem:pythunitexcept}, \ref{lem:pythmono}, \ref{lem:pythline} and \ref{lem:pythtriangle} with this signature, \label{signatureset}
    \item to study the element $(\beta_i+\beta_j)^2$ for all pairs of elements $\beta_i$ and $\beta_j$ from \ref{signatureset}.
    \begin{itemize}
    \item If $(\beta_i+\beta_j)^2\succ \gamma$ for all such pairs, then we do not get any additional square totally smaller than $\gamma$. In the following proof, we will sometimes show that $\beta_i^2,\beta_j^2\succ \frac{1}{2}\gamma$, from which $(\beta_i+\beta_j)^2\succ \gamma$ follows. \item Otherwise, we can get more squares. However, we will see that for $\gamma$, this can occur only in one trivial case.    
    \end{itemize}
\end{enumerate}
Moreover, $\alpha$ and $-\alpha$ produce the same square, so it suffices to study only one of every pair of (mutually opposite) signatures; e.g., instead of studying both elements with signatures $(+,+,+)$ and $(-,-,-)$, it is enough to examine those with the signature $(+,+,+)$.

\begin{lema} \label{lem:pythdecompo}
Let $a\geq 63$ and let $\gamma\succeq \omega^2$ where $\omega^2\in \O_K$ is not as in Lemmas \ref{lem:pythunitexcept}, \ref{lem:pythmono}, \ref{lem:pythline} and \ref{lem:pythtriangle}. Then $\omega\in\Z$.
\end{lema}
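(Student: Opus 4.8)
The plan is to follow the four-step strategy described immediately before the lemma statement, now applied exhaustively. By the preceding discussion, every $\omega$ with $\gamma\succeq\omega^2$ can be written as $\omega=\sum_{i=1}^n\beta_i$, where the $\beta_i$ are square roots of squares appearing in Lemmas \ref{lem:pythunitexcept}, \ref{lem:pythmono}, \ref{lem:pythline} and \ref{lem:pythtriangle}, all sharing a common signature, and moreover $\gamma\succeq(\beta_i+\beta_j)^2\succeq\beta_i^2$ for all $i,j$. Thus the heart of the proof is a finite case analysis over the (four, up to sign) signatures $(+,+,+)$, $(+,+,-)$, $(+,-,+)$, $(+,-,-)$: for each signature I would list all candidate square roots $\beta$ with that signature coming from the earlier lemmas, and then show that for every pair $\beta_i,\beta_j$ of such elements one has $(\beta_i+\beta_j)^2\succ\gamma$, so that no genuinely new square arises — the only exception being a trivial one (e.g. $\omega\in\Z$, coming from sums like $1+1$), which is exactly the conclusion $\omega\in\Z$.

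First I would fix, for each signature, the finite menu of building blocks. From Lemma \ref{lem:pythunitexcept}: $\beta=\pm1$ (signature $(+,+,+)$ up to sign) and $\beta=\pm\rho'\rho''\cdot\frac{1+\rho+\rho^2}{3}$; from Lemma \ref{lem:pythmono}: nothing new (those squares are ruled out); from Lemma \ref{lem:pythline}: $\beta=\pm\rho'\rho''(-(r+1)g_2+g_3)$ for $0\le r\le\frac a3-1$, together with its conjugate incarnations in (vii), (viii); from Lemma \ref{lem:pythtriangle}: $\beta=\pm\rho''\rho\,\alpha(0,r)'$ for $\frac a3+1\le r<\frac{5a-6}{12}$ and $\beta=\pm\rho\rho'\,\alpha(0,r)''$ for $\frac{5(a-3)}9\le r\le\frac{2a}3-1$, again with conjugate versions. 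I would compute the signature of each of these (using the sign data for $\rho,\rho',\rho''$ and for $g_3$ recorded in Section \ref{sect:prelim}), grouping them accordingly. Then, for two blocks $\beta_i,\beta_j$ in the same group, I would estimate each of the three conjugates of $(\beta_i+\beta_j)^2$ from below using the explicit bounds on $\rho,\rho',\rho''$; since all three conjugates of $\gamma$ are bounded above by $a^2$ (shown just before the lemmas), it suffices to exhibit, in each conjugate, one of $\beta_i,\beta_j$ (or their sum) exceeding $a^2$ in absolute value, or more sharply to show $\beta_i^2,\beta_j^2\succ\frac12\gamma$ so that $(\beta_i+\beta_j)^2=\beta_i^2+\beta_j^2+2\beta_i\beta_j\succ\gamma$ when $\beta_i\beta_j\succeq0$, i.e. when the signatures agree. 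The bounds already assembled inside Lemmas \ref{lem:pythline} and \ref{lem:pythtriangle} (where quantities like $a^4\alpha''^2>\gamma$ are proved) are exactly what feeds these comparisons.

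The delicate point, and the main obstacle, is the mixed-sign subtlety: within one signature class the cross term $2\beta_i\beta_j$ is \emph{not} automatically totally nonnegative — it is nonnegative in precisely those embeddings where $\beta_i$ and $\beta_j$ have the same sign, which is guaranteed by our having fixed a common signature, but one must be careful that ``common signature of $\beta_i$ and of $\beta_j$'' really does force $\beta_i\beta_j\succeq0$ (it does, since the sign of a product is the product of signs, componentwise). The genuine work is then checking that there is no near-collision: one must verify that the smallest blocks in each class, say $1$ paired with $1$, are the only way to stay below $\gamma$, and that pairing $1$ with any non-rational block $\beta$ already overshoots in some conjugate — this is where the lower bounds $\beta^2\succ\frac12\gamma$ (or $\succ\gamma$ outright) for every non-rational $\beta$ must be pushed through, and where the hypothesis $a\ge63$ is consumed. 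Concretely I would tabulate, signature by signature, the pair $(\beta_i,\beta_j)$ minimizing $\max_k |(\beta_i+\beta_j)^{(k)}|$ and show that even this minimum exceeds $\min_k\gamma^{(k)}$ unless both blocks are $\pm1$, whence $\omega=\beta_1+\dots+\beta_n\in\Z$. Throughout I would lean on \cite[Lemmas 6.2 and 6.3]{kalatinkova} to control which unit multiples can occur and on the explicit conjugate estimates for $\gamma,\gamma',\gamma''$ derived above, so that every inequality reduces to a polynomial-in-$a$ comparison valid for $a\ge63$.
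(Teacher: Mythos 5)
Your proposal follows essentially the same route as the paper: group the admissible square roots $\beta$ by signature (which turn out to be $1$ in $(+,+,+)$, the blocks from Lemmas \ref{lem:pythunitexcept}(2) and \ref{lem:pythline} in $(+,-,-)$, and the two families from Lemma \ref{lem:pythtriangle} in $(-,-,+)$ and $(-,+,-)$), then show for each class that any pair $\beta_i,\beta_j$ satisfies $(\beta_i+\beta_j)^2\succ\gamma$ — either via a direct bound on one conjugate or via $\beta_i^2,\beta_j^2\succ\tfrac12\gamma$ together with the nonnegativity of the cross term — leaving only rational $\omega$. The one small correction is that the ``conjugate incarnations'' from (vii) and (viii) need not be added to your menu: Lemmas \ref{lem:pythline} and \ref{lem:pythtriangle} already pin down exactly which unit multiples and which conjugates can occur, so the list of building blocks is precisely the five families the paper enumerates.
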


\begin{proof} Recall that the unit $\rho$ has the signature $(+,-,-)$. Thus, the so far found elements, whose squares are totally smaller than $\gamma$, have the following signatures: 
\begin{enumerate}
\item $1$ has the signature $(+,+,+)$, \label{el:1}
\item $\rho'\rho''\frac{1+\rho+\rho^2}{3}$ has the signature $(+,-,-)$, \label{el:2}
\item $\rho'\rho''\left(-(r+1)g_2+g_3\right)$ where $0\leq r\leq \frac{a}{3}-1$ have the signature $(+,-,-)$, \label{el:3}
\item $\rho''\rho\alpha(0,r)'$ where $\frac{a}{3}+1\leq r<\frac{5a-6}{12}$ have the signature $(-,-,+)$, \label{el:4}
\item $\rho\rho'\alpha(0,r)''$ where $\frac{5(a-3)}{9}\leq r\leq \frac{2a}{3}-1$ have the signature $(-,+,-)$. \label{el:5}    
\end{enumerate} 

From elements in (\ref{el:1}), we can get only squares $\omega^2$ such that $\omega\in \Z$, which are included in the statement of the lemma.

We will proceed with elements in (\ref{el:2}) and (\ref{el:3}), which have the same signature. 
Let $\alpha_1=-(r_1+1)g_2+g_3$ and $\alpha_2=-(r_2+1)g_2+g_3$ where $-1\leq r_1,r_2\leq \frac{a}{3}-1$. Note for $r_i=-1$, we obtain the element $\frac{1+\rho+\rho^2}{3}$ from (\ref{el:2}). Then
\begin{multline*}
(\rho\rho'\alpha_1''+\rho\rho'\alpha_2'')^2=\rho^2\rho'^2(\alpha_1''^2+2\alpha_1''\alpha_2''+\alpha_2''^2)\\>4(a+1)^2\left(1+\frac{1}{a+2}\right)^2\frac{1}{9}\left(1-\frac{1}{a+2}+\frac{1}{(a+3)^2}\right)^2>\gamma''.
\end{multline*} 
Thus, the square of a sum of elements in (\ref{el:2}) and (\ref{el:3}) cannot be totally smaller than $\gamma$.

Let us now focus on elements in (\ref{el:4}). For them, we have
\[
\rho''^2\rho^2\alpha(0,r)'^2>\frac{1}{(a+3)^2}(a+1)^2\left(\frac{a+5}{3}+\frac{a+8}{3(a+2)}+\frac{2}{3(a+2)^2}\right)^2>\frac{1}{2}\gamma
\]
for $\frac{a}{3}+1\leq r<\frac{5a-6}{12}$. That means that the square of a sum of two elements in (\ref{el:4}) cannot be totally smaller than $\gamma$.

Likewise, if we consider elements in (\ref{el:5}), we obtain 
\[
\rho''^2\rho^2\alpha(0,r)'^2>\frac{1}{(a+3)^2}(a+1)^2\left(\frac{5a}{9}-1+\frac{5a}{9(a+2)}+\frac{2}{3(a+2)^2}\right)^2>\frac{1}{2}\gamma''
\] 
for $\frac{5(a-3)}{9}\leq r\leq \frac{2a}{3}-1$. That completes the proof.  
\end{proof}

Now, we are able to prove that the Pythagoras number of $\O_K$ is $6$ as for $\Z[\rho]$.

\begin{pro} \label{prop:pythagoras6}
We have $\P(\O_K)=6$.
\end{pro}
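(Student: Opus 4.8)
The plan is to prove the two inequalities $\P(\O_K)\leq 6$ and $\P(\O_K)\geq 6$ separately. The upper bound $\P(\O_K)\leq 6$ is immediate: $K$ is a totally real cubic field, so by the theorem of Kala and Yatsyna \cite{kalayatsyna} every totally positive element of $\O_K$ that is a sum of squares is already a sum of at most $6$ squares. Hence it only remains to exhibit an element $\gamma\in\O_K$ that requires exactly $6$ squares, and this is precisely the $\gamma$ fixed above.

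For the lower bound, first I would record that $\gamma$ is, by construction, a sum of six non-zero squares in $\O_K$, namely $\gamma=1^2+1^2+1^2+2^2+\beta_1^2+\beta_2^2$ with $\beta_1=\tfrac{a+6}{3}g_1+\tfrac{a}{3}g_2-g_3$ and $\beta_2=\tfrac{5a+3}{9}g_1+\tfrac{2a+3}{3}g_2-2g_3$ (one should note in passing that $\beta_1,\beta_2\in\O_K$, which is clear from the coordinates in the basis $B_3(1,1)$, and that these squares are genuinely nonzero). So $\gamma\in\sum\O_K^2$ and $\gamma\in\sum^6\O_K^2$. The task is to show $\gamma\notin\sum^5\O_K^2$, i.e.\ that $\gamma$ cannot be written as a sum of five squares of elements of $\O_K$ for $a\geq 63$ (the finitely many remaining fields with $21\leq a<63$ in the family could in principle be checked by direct computation, though presumably the statement is intended asymptotically, or these are handled separately).

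The core of the argument is to use the complete classification of squares $\omega^2$ with $\gamma\succeq\omega^2$ obtained in Lemmas \ref{lem:pythunitexcept}, \ref{lem:pythmono}, \ref{lem:pythline}, \ref{lem:pythtriangle} and \ref{lem:pythdecompo}. Taken together these say: the only $\omega\in\O_K$ with $\gamma\succeq\omega^2$ are (up to sign) the rational integers $\omega$ with $|\omega|\leq a-1$ arising from $1$, the elements $\rho'\rho''\,g_3$ and $\rho'\rho''\bigl(-(r+1)g_2+g_3\bigr)$, and the two families $\rho''\rho\,\alpha(0,r)'$ and $\rho\rho'\,\alpha(0,r)''$ in the allowed ranges of $r$; and Lemma \ref{lem:pythdecompo} guarantees that no further $\omega$ appear beyond sums lying in $\Z$. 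Now in any decomposition $\gamma=\sum_{i=1}^m \omega_i^2$ each $\omega_i^2$ must be one of these. I would then argue by counting and by a trace/coordinate obstruction: reduce $\gamma=\sum\omega_i^2$ modulo the contribution of the rational-integer summands, and show that to account for the ``transcendental part'' of $\gamma$ — concretely its $\rho$- and $\rho^2$-coordinates, or equivalently $\mathrm{Tr}(\gamma)$ together with $\mathrm{Tr}(\rho\gamma)$ — one needs at least two of the summands $\omega_i^2$ to come from the non-integral families, and that no two such elements from the allowed list can combine with at most three further (necessarily integer) squares summing correctly; a single non-integral square plus integers cannot reproduce the coordinates of $\gamma$ either, because $\gamma-\omega^2$ would fail to be totally nonnegative or would fail to lie in $\sum^{\leq 4}\Z^2$ after the reduction. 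Carrying this bookkeeping out shows every decomposition uses at least the four integer squares $1,1,1,4$ plus at least two genuinely non-integral squares, hence $m\geq 6$.

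The main obstacle is this last combinatorial step: one has an explicit but somewhat unwieldy list of candidate squares, each depending on a parameter $r$, and one must rule out every way of writing $\gamma$ as a sum of five of them. I expect the cleanest route is to exploit $\mathrm{Tr}(\gamma)=\tfrac{16a^2-24a+981}{27}$ together with lower bounds on $\mathrm{Tr}(\omega^2)$ for each candidate $\omega$ (these traces are already essentially computed in the preceding lemmas), so that at most a bounded number of non-integral squares can fit inside $\gamma$ trace-wise, and then a second invariant (such as $\mathrm{Tr}(\rho\,\gamma)$, or reduction modulo $3$ of the $g_3$-coordinate) pins down exactly which ones and forces the count up to $6$; once the trace budget is set up the remaining case analysis is finite and mechanical. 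Hence the proposition follows, establishing $\P(\O_K)=6$, exactly as in the monogenic case treated by the second author in \cite{tinkovapyth}.
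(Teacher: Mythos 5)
Your overall architecture matches the paper's: the upper bound $\P(\O_K)\leq 6$ from Kala--Yatsyna, the choice of the same witness $\gamma$ (a sum of six squares by construction), the reduction of the lower bound to the classification of squares $\omega^2\preceq\gamma$ in Lemmas \ref{lem:pythunitexcept}--\ref{lem:pythdecompo}, and the deferral of $21\leq a<63$ to a finite computation. However, the decisive step --- ruling out every representation of $\gamma$ as a sum of five squares from the classified list --- is only gestured at, and the invariants you propose are not shown to work. The paper's actual mechanism is a \emph{parity} argument on the coordinates in the integral basis $\{g_1,g_2,g_3\}$: writing $\gamma=\frac{2(17a^2+24a+342)}{81}g_1+\frac{11a^2-18a-72}{27}g_2-\frac{11(a-3)}{9}g_3$, one checks that for $a$ even the $g_3$-coefficient is odd and for $a$ odd the $g_2$-coefficient is odd, while among all candidate squares only those in the family $\rho^2\rho'^2\alpha(0,r)''^2$ have the corresponding coefficient odd. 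This forces at least one summand from that family. A second pass, using that every candidate has nonnegative $g_2$-coefficient and nonpositive $g_3$-coefficient, pins the remaining non-integral summand down to the single element $\rho'^2\rho''^2(-g_2+g_3)^2$ and forces $r=\frac{5(a-3)}{9}$, leaving the rational remainder $7$, which requires four squares by the three-square theorem; hence six squares in total. None of this is recoverable from your sketch: you propose a ``trace budget'' via $\mathrm{Tr}(\gamma)$ and $\mathrm{Tr}(\rho\gamma)$, or reduction mod $3$ of the $g_3$-coordinate, but you do not verify that these invariants actually separate the candidates, and the trace bounds alone do not (several non-integral candidates have traces small enough that trace-counting cannot by itself force the count to six).

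Concretely, the assertions ``no two such elements from the allowed list can combine with at most three further integer squares summing correctly'' and ``a single non-integral square plus integers cannot reproduce the coordinates of $\gamma$'' are exactly the content that must be proved, and your proposal supplies no argument for them beyond the word ``bookkeeping.'' Your closing claim that every decomposition uses ``at least the four integer squares $1,1,1,4$'' is also not quite the right statement: what one shows is that after subtracting the two forced non-integral squares the remainder is the rational integer $7$, which is not a sum of three squares. So the skeleton is right but the proof has a genuine gap at its core; as written it would not compile into a complete argument without importing the paper's coordinate-parity analysis (or an equivalent substitute that you would still need to construct and verify).
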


\begin{proof}
If $a<63$, i.e, for $a=21,30,48,57$, we can use a computer program to check that we need at least $6$ squares to express $\gamma$. Mathematica notebook with our code is available at \url{https://sites.google.com/view/tinkovamagdalena/research}. Thus, let us assume that $a\geq 63$. In Lemmas \ref{lem:pythunitexcept}, \ref{lem:pythmono}, \ref{lem:pythline}, \ref{lem:pythtriangle} and \ref{lem:pythdecompo}, we have determined all squares which can be totally smaller than $\gamma$. We have obtained the following elements:
\begin{enumerate}
\item $n^2\in \Z$ where $n^2<a^2$, \label{sq:1}
\item $\rho'^2\rho''^2\left(\frac{1+\rho+\rho^2}{3}\right)^2=\frac{a^2+4a+6}{9}g_1+\frac{a^2+2a+3}{9}g_2-\frac{a}{3}g_3$, \label{sq:2} 
\item $\rho'^2\rho''^2\left(-(r+1)g_2+g_3\right)^2=\frac{a^2+10a+33+(6a+36)r+9r^2}{9}g_1+\frac{a^2+8a+3+6ar}{9}g_2-\frac{a+6+6r}{3}g_3$
\label{sq:3}
where $0\leq r\leq \frac{a}{3}-1$,
\item $\rho''^2\rho^2\left(-g_1'-(r+1)g_2'+2g_3'\right)^2=\frac{4a^2+10a+6-(12a+18)r+9r^2}{9}g_1+\frac{3a^2+2a+3-6ar}{9}g_2-(a-2r)g_3$
where $\frac{a}{3}+1\leq r<\frac{5a-6}{12}$,
\label{sq:4}
\item $\rho^2\rho'^2\left(-g_1''-(r+1)g_2''+2g_3''\right)^2=\frac{15-8a+36r+9r^2}{9}g_1+\frac{3-4a-4a^2+(12a+18)r}{9}g_2+\frac{4a-3-12r}{3}g_3$
where $\frac{5(a-3)}{9}\leq r\leq \frac{2a}{3}-1$.
\label{sq:5}
\end{enumerate}
Recall that only these elements can appear in a decomposition of $\gamma$ to a sum of squares.

First of all, let us note that for all these elements, the coefficients before $g_2$ are nonnegative, and the coefficients before $g_3$ are negative or zero. This is clear for Cases (\ref{sq:1}), (\ref{sq:2}) and (\ref{sq:3}). For (\ref{el:4}), we see that
\[
\frac{3a^2+2a+3-6ar}{9}>\frac{a^2+10a+6}{18}>0
\]
and 
\[
-(a-2r)<-1-\frac{a}{6}<0.
\] 
Similarly, for (\ref{sq:5}), we obtain
\[
\frac{3-4a-4a^2+(12a+18)r}{9}\geq \frac{8a^2-42a-81}{27}>0
\] 
and 
\[
\frac{4a-3-12r}{3}\leq-\frac{8a-51}{9}<0.
\]

Recall that 
\begin{align*}
\gamma&=\frac{34a^2+15a+783}{81}+\frac{11a^2-29a-39}{27}\rho-\frac{11a-33}{27}\rho^2
\\
&=\frac{2(17a^2+24a+342)}{81}g_1+\frac{11a^2-18a-72}{27}g_2-\frac{11(a-3)}{9}g_3.
\end{align*}
If $a$ is even, then $\frac{11(a-3)}{9}$ is odd. If we consider the coefficient before $g_3$ for elements in (\ref{sq:1})--(\ref{sq:5}), we see that this coefficient is odd only for squares in (\ref{sq:5}) under the assumption that $a$ is even. Thus, at least one element from (\ref{sq:5}) must appear in every decomposition of $\gamma$ to a sum of squares. Similarly, let $a$ be odd. Then $\frac{11a^2-18a-72}{27}$, i.e., the coefficient before $g_2$ of $\gamma$, is odd. If we again consider the squares in (\ref{sq:1})--(\ref{sq:5}), only elements in (\ref{sq:5}) has an odd coefficient before $g_2$ for odd $a$. That implies that for all possible $a\geq 63$, at least one element from (\ref{sq:5}) appear in every decomposition of $\gamma$ to a sum of squares.

Considering elements from (\ref{sq:5}), we can conclude that $-\frac{11(a-3)}{9}=\frac{4a-3-12r}{3}$ only if $r=\frac{23a-42}{36}$. For this concrete $r$, the coefficient of an element from (\ref{sq:5}) before $g_2$ is
\[
\frac{3-4a-4a^2+(12a+18)r}{9}=\frac{11a^2}{27}-\frac{13a}{18}-2.
\]
Moreover, 
\[
\frac{11a^2}{27}-\frac{13a}{18}-2=\frac{11a^2-18a-72}{27}
\]
only if $a=12$, which does not give a simplest cubic field from our subfamily. Thus, if the element from (\ref{sq:5}) with $r=\frac{23a-42}{36}$ were in the decomposition of $\gamma$, then some other summand would have to have
\begin{itemize}
\item a non-zero coefficient before $g_2$,
\item a zero coefficient before $g_3$.
\end{itemize} 
However, these conditions are not both satisfied for any element from (\ref{sq:1})--(\ref{sq:5}). Thus, the element from (\ref{sq:5}), which appears in the decomposition of $\gamma$, satisfies
\[
-\frac{11(a-3)}{9}\neq \frac{4a-3-12r}{3}.
\] 
That means that we need at least one more element from (\ref{sq:2})--(\ref{sq:5}) to express $\gamma$.

We know that for all elements from (\ref{sq:5}), the coefficient before $g_3$ satisfies
\[
\frac{4a-3-12r}{3}\leq-\frac{8a-51}{9}.
\]
Thus, the other summand from (\ref{sq:2})--(\ref{sq:5}) has this coefficient between zero and
\[
-\frac{11(a-3)}{9}+\frac{8a-51}{9}=-\frac{a}{3}-2.
\] 
That is true for
\begin{enumerate}[(a)]
\item the element $\frac{a^2+4a+6}{9}g_1+\frac{a^2+2a+3}{9}g_2-\frac{a}{3}g_3$ from (\ref{sq:2}), \label{ssq:1}
\item the element with $r=0$ from (\ref{sq:3}), \label{ssq:2} 
\item elements from (\ref{sq:4}). \label{ssq:3}
\end{enumerate}
Let us now discuss the case when at least two elements from \ref{ssq:1}--\ref{ssq:3} appeared in the decomposition of $\gamma$. In this case, it suffices to realize that a sum of two elements from \ref{ssq:1}--\ref{ssq:3} never has the coefficient before $g_3$ between $-\frac{a}{3}-2$ and $0$, which we have derived to be a necessary condition in this part of the proof. This follows from the fact that the largest coefficient before $g_3$ is achieved by the element in \ref{ssq:3} with the largest $r$, where we have
\[
-(a-2r)<-1-\frac{a}{6}.
\]
So its twice is 
\[
-2(a-2r)<-\frac{a}{3}-2.
\]
It implies that in every decomposition of $\gamma$, there must appear exactly one element from (\ref{sq:5}), and exactly one element from \ref{ssq:1}--\ref{ssq:3}. The remaining summands belong to (\ref{sq:1}).

Let us now focus on elements from \ref{ssq:3}, i.e., elements from (\ref{el:4}), and let us assume that in a decomposition of $\gamma$, there is one element from (\ref{el:4}) and one from (\ref{el:5}). If we consider coefficients before $g_2$ and $g_3$, we can obtain the following system of equations:
\[
\frac{11a^2-18a-72}{27}=\frac{3a^2+2a+3-6ar_1}{9}+\frac{3-4a-4a^2+(12a+18)r_2}{9}
\]
and
\[
-\frac{11(a-3)}{9}=-(a-2r_1)+\frac{4a-3-12r_2}{3}
\] 
where $\frac{a}{3}+1\leq r_1<\frac{5a-6}{12}$ and $\frac{5(a-3)}{9}\leq r_2\leq \frac{2a}{3}-1$. This system has the solution $r_1=\frac{a}{3}-1$ and $r_2=\frac{5(a-3)}{9}$. However, $r_1=\frac{a}{3}-1$ does not belong to our interval. Thus, the elements from \ref{ssq:3} cannot appear in any decomposition of $\gamma$ to a sum of squares.

We will proceed with \ref{ssq:1}. In this case, the coefficient before $g_3$ leads to the equation
\[
-\frac{11(a-3)}{9}=-\frac{a}{3}+\frac{4a-3-12r}{3}
\]
with the solution $r=\frac{10a-21}{18}$. However, regarding the coefficient before $g_2$, we obtain
\[
\frac{a^2+2a+3}{9}+\frac{3-4a-4a^2+(12a+18)r}{9}=\frac{11a^2-18a-45}{27}\neq \frac{11a^2-18a-72}{27}
\]
for every $a$.

Thus, only the element in \ref{ssq:2} can appear in every decomposition of $\gamma$. Moreover, $r=\frac{5(a-3)}{9}$ for the element from (\ref{sq:5}) in this decomposition, and, thus, 
\begin{multline*}
\gamma=\left(\frac{a^2+10a+33}{9}g_1+\frac{a^2+8a+3}{9}g_2-\frac{a+6}{3}g_3\right)\\+\left(\frac{25a^2-42a-180}{81}g_1+\frac{8a^2-42a-81}{27}g_2-\frac{8a-51}{9}g_3\right)+7.
\end{multline*} 
To express $7$, we need at least $4$ squares. Therefore, we need at least $6$ squares to express $\gamma$, which gives $\P(\O_K)= 6$ as desired.    
\end{proof}

\subsection{Universal quadratic forms}

In this part, we will use the results on indecomposables in $\O_K$ to get information on universal quadratic forms over $\O_K$. Before that, we will summarize several basic facts on quadratic forms over number fields.

Let us consider quadratic form of the form $Q(x_1,x_2,\ldots,x_n)=\sum_{1\leq i\leq j\leq n}a_{ij}x_ix_j$ where $a_{ij}\in\O_K$. We say that $Q$ is totally positive definite if $Q(\gamma_1,\gamma_2,\ldots,\gamma_n)\in\O_K^{+}$ for every $n$-tuple $\gamma_1,\ldots,\gamma_n\in\O_K$ such that elements $\gamma_i$ are not all equal to zero. The quadratic form $Q$ is classical if $2$ divides $a_{ij}$ for all $i\neq j$. The quadratic form $Q$ is called diagonal if $a_{ij}=0$ whenever $i\neq j$. And finally, $Q$ is universal if it represents all totally positive algebraic integers in $K$. In this part, we will assume that all quadratic forms are totally positive definite.

To bound the minimal number of variables of universal quadratic forms over our subfamily of the non-monogenic simplest cubic fields, we will use the method developed in \cite[Section 7]{kalatinkova}. This approach is based on the knowledge of indecomposable integers in $K$. In this part, we will use the following notation. Let $S$ denote the set of indecomposable integers in $\O_K$ up to multiplication by squares of units in $\O_K$. Moreover, let us assume that there exist $\delta\in\O_K^{\vee,+}$ and $\alpha_1,\alpha_2,\ldots,\alpha_n\in\O_K^{+}$ such that $\text{Tr}(\alpha_i\delta)=1$ for all $1\leq i\leq n$. Then, a more general result for totally real number fields derived in \cite[Section 7]{kalatinkova} implies the following theorem for cubic fields:

\begin{teo}[{\cite[Section 7]{kalatinkova}}] \label{thm:quaformsbounds}
Let $K$ be a totally real cubic field. 
\begin{enumerate}
\item There exist a diagonal universal quadratic form over $\O_K$ with $\P(\O_K)\#S$ variables.
\item Every classical universal quadratic form over $\O_K$ has at least $\frac{n}{3}$ variables.
\item If $n\geq 240$, then every (non-classical) universal quadratic form over $\O_K$ has at least $\frac{\sqrt{n}}{3}$ variables.    
\end{enumerate}
\end{teo}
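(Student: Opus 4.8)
The three statements are the specialization to the case $d=[K:\mathbb{Q}]=3$ of the general bounds for totally real number fields established in \cite[Section 7]{kalatinkova}, and the plan is to recall how each is obtained, stressing the role played by the indecomposables and the codifferent. For (1), I would first fix a set $S$ of representatives of the indecomposable integers of $\mathcal{O}_K$ modulo multiplication by squares of units; this is finite, since by Theorem \ref{teolistindec} there are finitely many indecomposables up to totally positive units and $(\mathcal{O}_K^{*})^2$ has finite index in the group of totally positive units. For each $\alpha\in S$ set $Q_\alpha=\alpha(x_1^2+\cdots+x_{\mathcal{P}(\mathcal{O}_K)}^2)$ and let $Q=\bigoplus_{\alpha\in S}Q_\alpha$, a diagonal form in $\mathcal{P}(\mathcal{O}_K)\,\#S$ variables. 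To see that $Q$ is universal, take $\beta\in\mathcal{O}_K^{+}$; since every totally positive integer of $\mathcal{O}_K$ is a sum of indecomposables, $\beta$ is a finite sum of indecomposables, and grouping the summands according to their class in $S$ we get $\beta=\sum_{\alpha\in S}\alpha\,\sigma_\alpha$, where each $\sigma_\alpha$ is a sum of squares of units. By definition of the Pythagoras number $\sigma_\alpha$ is a sum of at most $\mathcal{P}(\mathcal{O}_K)$ squares of elements of $\mathcal{O}_K$, hence $\alpha\sigma_\alpha$ is a value of $Q_\alpha$ and $\beta$ is a value of $Q$. This step is routine once $\mathcal{P}(\mathcal{O}_K)=6$ is known (Proposition \ref{prop:pythagoras6}).

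For (2), let $Q$ be a classical universal form over $\mathcal{O}_K$ of rank $m$ and let $G\in M_m(\mathcal{O}_K)$ be its symmetric Gram matrix (integral because $Q$ is classical), so that $Q(x)=x^{t}Gx$. Fix $\delta$ and $\alpha_1,\dots,\alpha_n$ as in the hypothesis. I would endow the free $\mathbb{Z}$-module $\mathcal{O}_K^{m}$, which has rank $3m$, with the bilinear form $\langle x,y\rangle=\mathrm{Tr}(\delta\,x^{t}Gy)$; this is $\mathbb{Z}$-valued because $x^{t}Gy\in\mathcal{O}_K$ and $\delta\in\mathcal{O}_K^{\vee}$, symmetric, and positive definite because $\delta\succ0$ and $Q$ is totally positive definite. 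Thus we obtain a positive definite integral lattice $\Lambda$ of rank $3m$ with $\langle x,x\rangle=\mathrm{Tr}(\delta Q(x))$. As $Q$ is universal, for each $i$ there is $v_i\in\mathcal{O}_K^{m}$ with $Q(v_i)=\alpha_i$, hence $\langle v_i,v_i\rangle=\mathrm{Tr}(\delta\alpha_i)=1$. Now in a positive definite integral lattice the Cauchy--Schwarz inequality gives $\langle v_i,v_j\rangle^2\le1$ with equality only when $v_i=\pm v_j$; since the $\alpha_i$ are pairwise distinct we conclude $\langle v_i,v_j\rangle=0$ for $i\neq j$, so $v_1,\dots,v_n$ are pairwise orthogonal in a space of dimension $3m$ and therefore $n\le 3m$.

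For (3), let $Q$ now be an arbitrary, possibly non-classical, universal form of rank $m$. Its off-diagonal coefficients need not be even, so its Gram matrix is only half-integral and the construction above breaks down; instead I would work with the polarization $B_Q(x,y)=Q(x+y)-Q(x)-Q(y)$, whose coefficients all lie in $\mathcal{O}_K$, and define on $\mathcal{O}_K^{m}$ the bilinear form $\langle x,y\rangle'=\mathrm{Tr}(\delta\,B_Q(x,y))$. This yields a positive definite integral lattice $\Lambda'$ of rank $3m$ with $\langle x,x\rangle'=2\,\mathrm{Tr}(\delta Q(x))$, hence an even lattice, in which a representation vector $v_i$ of $\alpha_i$ satisfies $\langle v_i,v_i\rangle'=2$; that is, the $v_i$ are roots of $\Lambda'$, and no two of them are opposite (again because the $\alpha_i$ are distinct). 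One then invokes the bound $O(N^2)$ for the number of roots of a positive definite even lattice of rank $N$: the only obstructions to a clean quadratic bound are the exceptional small-rank root lattices, chief among them $E_8$ with its $240$ roots in rank $8$, and this is precisely what accounts for the hypothesis $n\ge240$; past that threshold the quadratic estimate applies with a constant giving $n\le 9m^2$, i.e.\ $m\ge\sqrt{n}/3$.

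The step I expect to be the real obstacle is the root-counting in (3): making the bound on the number of roots of a rank-$N$ even lattice quantitatively precise, and checking that the exceptional configurations are excluded once $n\ge240$, is the only part that is not a short argument, and it is also the reason the non-classical estimate is both weaker and conditional. By contrast, (1) and (2) reduce immediately to the finiteness and exact value of the Pythagoras number (Proposition \ref{prop:pythagoras6}) and to the elementary observation about norm-$1$ vectors in an integral lattice.
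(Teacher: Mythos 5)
First, a remark on the comparison itself: the paper does not prove this theorem — it is imported wholesale from \cite[Section 7]{kalatinkova} (parts (1) and (2) are Propositions 7.1 and 7.2 there), so your reconstruction can only be judged against that source. Your arguments for (1) and (2) are correct and are exactly the standard ones. For (1): writing $\beta\in\O_K^+$ as a sum of indecomposables, grouping by class in $S$ to get $\beta=\sum_{\alpha\in S}\alpha\sigma_\alpha$ with $\sigma_\alpha$ a sum of squares of units, and compressing each $\sigma_\alpha$ to at most $\P(\O_K)$ squares is precisely how universality of $\bigoplus_{\alpha\in S}\alpha(x_1^2+\dots+x_{\P(\O_K)}^2)$ is established; note only that this needs nothing specific to the fields of this paper, so invoking Theorem \ref{teolistindec} and Proposition \ref{prop:pythagoras6} is superfluous (the theorem is stated for arbitrary totally real cubic $K$, where $\#S$ and $\P(\O_K)$ may be infinite or unknown but the inequality still holds formally). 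For (2): the trace form $\langle x,y\rangle=\mathrm{Tr}(\delta\,x^tGy)$ on the rank-$3m$ lattice $\O_K^m$, the norm-one vectors $v_i$, and the Cauchy--Schwarz/orthogonality step are all sound; the one implicit hypothesis you use is that the $\alpha_i$ are pairwise distinct, which is indeed the intended reading and is what forces $v_i\neq\pm v_j$.

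The genuine gap is in (3), and you have flagged it yourself. The setup is right: the polarized form $\langle x,y\rangle'=\mathrm{Tr}(\delta B_Q(x,y))$ makes $\O_K^m$ an even positive definite $\Z$-lattice of rank $N=3m$ in which $\pm v_1,\dots,\pm v_n$ are $2n$ distinct roots. But ``one then invokes the bound $O(N^2)$ for the number of roots'' is not a proof; the decisive ingredient is the classification-based lemma that the roots of such a lattice form a simply-laced (ADE) root system, that an irreducible component of rank $r$ has $r(r+1)$, $2r(r-1)$, $72$, $126$ or $240$ roots according to its type, and the resulting optimization over decompositions: one must check that a root system of rank $N\le 11$ has fewer than $480$ roots (so $n\ge 240$ forces $N\ge 12$), and that for $N\ge 12$ the total number of roots is at most $2N^2$, whence $2n\le 2(3m)^2$ and $m\ge\sqrt{n}/3$. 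Your sketch gestures at $E_8$ as the obstruction but does not carry out this case analysis, nor does it explain quantitatively why the threshold is $240$ rather than, say, the $252$ roots of $E_8\oplus A_3$ in rank $11$. Without this lemma (or an explicit citation standing in for it), part (3) remains a plan rather than a proof.
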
  

Now we will use Theorem \ref{thm:quaformsbounds} and results on indecomposable integers to derive bounds on the minimal number of variables of universal quadratic forms over our nice subfamily of the simplest cubic fields.

\begin{pro}\label{prop:nonmonogenicbounds}
Let $K$ be a simplest cubic field such that $a\equiv3,21\;(\textup{mod 27})$, $a>12$ and $\frac{\Delta}{27}$ is square-free. Then:
\begin{enumerate}
\item There exists a diagonal universal quadratic form over $\O_K$ with $\frac{a^2+3a}{3}+12a+12$ variables.
\item Every classical universal quadratic form over $\O_K$ has at least $\frac{a^2+3a}{54}$ variables.
\item If $a> 64$, then every (non-classical) universal quadratic form over $\O_K$ has at least $\frac{\sqrt{a^2+3a}}{9\sqrt{2}}$ variables.
\end{enumerate}
\end{pro}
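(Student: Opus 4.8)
The plan is to apply Theorem \ref{thm:quaformsbounds} directly, feeding it the data extracted from Theorem \ref{teolistindec} together with the Pythagoras number computed in Proposition \ref{prop:pythagoras6}. The three items of the conclusion correspond exactly to the three items of Theorem \ref{thm:quaformsbounds}, so the only real work is bookkeeping: (1) determine $\#S$, the number of indecomposables up to squares of units; (2) exhibit a single $\delta\in\O_K^{\vee,+}$ and count how many of our indecomposables $\alpha$ satisfy $\mathrm{Tr}(\delta\alpha)=1$, giving the value of $n$; and (3) substitute $\P(\O_K)=6$ and these two counts into the stated bounds.

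First I would address $\#S$. Theorem \ref{teolistindec} gives the indecomposables up to multiplication by \emph{totally positive} units, and their total count is $\frac{a^2+3a}{18}+2a+2$. Since in the simplest cubic fields every totally positive unit is a square (as used repeatedly in Section \ref{sect:consequences}), multiplication by squares of units is the same relation as multiplication by totally positive units on the totally positive indecomposables; but $S$ counts indecomposables in \emph{all} signatures up to squares of units, and each class in Theorem \ref{teolistindec} other than $1$ splits according to the number of available signature classes. Concretely, the unit group modulo squares has order $2^{r_1}=2^3=8$ for a totally real cubic field, while totally positive units form the squares; since the codifferent-based indecomposables $\varepsilon\alpha$ with $\varepsilon$ a unit realize (up to squares) a group of signatures, one expects each non-trivial class of Theorem \ref{teolistindec} to contribute $\frac{8}{2}=4$ elements to $S$ (the three conjugate-orbit representatives in each ``triple'' noted in Section \ref{sec:reprpoints}, plus one more signature), so that $\#S = 4\big(\frac{a^2+3a}{18}+2a+1\big)+1$... rather, I would simply recompute $\#S$ to land on the value making item (1) read $\frac{a^2+3a}{3}+12a+12 = 6\cdot\#S$, i.e. $\#S = \frac{a^2+3a}{18}+2a+2$, which is precisely the count in Theorem \ref{teolistindec}; so in fact $S$ equals that set and no signature multiplication is needed here. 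This is the step I expect to be the main obstacle: pinning down whether $S$ is literally the list of Theorem \ref{teolistindec} or a signature-expanded version, and justifying it cleanly.

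Next I would handle $n$. Throughout Section \ref{sec:indec} the element $\varphi_1+\varphi_3\in\O_K^{\vee,+}$ recurs: it was shown totally positive (Proposition \ref{proindecs12}) and it satisfies $\mathrm{Tr}((\varphi_1+\varphi_3)\alpha)=1$ for $\alpha=g_3$, for the elements of (vi), and (after conjugation, since (vii),(viii) are associates of conjugates of (vi)) for those too; moreover from Proposition \ref{promintrace2}'s computation $\mathrm{Tr}((\varphi_1+\varphi_3)\alpha)=2$ on (ii)--(iv). So with $\delta=\varphi_1+\varphi_3$ the indecomposables of minimal trace $1$ realized with value exactly $1$ against this $\delta$ are those of types (i), (vi), (vii), (viii); for type (v) one uses instead that $3\varphi_1+2\varphi_3$ gives trace $1$, but a cleaner route is to count, against a \emph{single} suitable $\delta$, the elements $\alpha_i$ with $\mathrm{Tr}(\delta\alpha_i)=1$ — taking $\delta=\varphi_1+\varphi_3$ yields $n = 1 + \#(vi)+\#(vii)+\#(viii) = 1 + \frac{a}{3} + \frac{a}{3} + \frac{a}{3} = a+1$, and then item (3) would read $\frac{\sqrt{a+1}}{3}$, not matching. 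Hence I would instead take the count of \emph{all} minimal-trace-$1$ indecomposables, namely types (i) and (v)--(viii): $\#(i)+\#(v)+\#(vi)+\#(vii)+\#(viii) = 1 + \frac{a^2/9 + \text{linear}}{2} + a = \frac{a^2+3a}{18}+\frac{3a}{2}+1$, and check that $\frac{n}{3}$ equals $\frac{a^2+3a}{54}$, forcing $n=\frac{a^2+3a}{18}$; so the honest move is: identify a $\delta$ and a set of $\frac{a^2+3a}{18}$ elements with $\mathrm{Tr}(\delta\alpha_i)=1$, most naturally the triangle $R_4$ (type (v)), whose cardinality $\#(v)$ one computes from $0\le v\le\frac a3-1$, $\frac a3+1\le r\le\frac{2a}{3}-v$ to be $\sum_{v=0}^{a/3-1}\big(\frac a3-v\big) = \frac12\cdot\frac a3\cdot\big(\frac a3+1\big) = \frac{a^2+3a}{18}$, using $\delta = 3\varphi_1+2\varphi_3\in\O_K^{\vee,+}$ as in Proposition \ref{proindecs12}. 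Thus $n=\frac{a^2+3a}{18}$.

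Finally I would assemble the three bounds. For (1): $\P(\O_K)\#S = 6\big(\frac{a^2+3a}{18}+2a+2\big) = \frac{a^2+3a}{3}+12a+12$, invoking Proposition \ref{prop:pythagoras6} and Theorem \ref{teolistindec} (with $S$ identified as above). For (2): $\frac n3 = \frac13\cdot\frac{a^2+3a}{18} = \frac{a^2+3a}{54}$, by Theorem \ref{thm:quaformsbounds}(2) applied to the $n=\frac{a^2+3a}{18}$ elements of $R_4$ against $\delta=3\varphi_1+2\varphi_3$. For (3): the hypothesis $n\ge 240$ of Theorem \ref{thm:quaformsbounds}(3) holds once $\frac{a^2+3a}{18}\ge 240$, i.e. $a^2+3a\ge 4320$, which holds for $a\ge 64$ (indeed $64^2+3\cdot64 = 4288$... so one needs $a\ge 65$, or to re-examine the threshold; I would state it as $a>64$ and double-check the inequality $\frac{a^2+3a}{18}\ge 240 \iff a \ge 64$ carefully, adjusting the hypothesis if the arithmetic demands $a\ge 65$), and then every non-classical universal quadratic form has at least $\frac{\sqrt n}{3} = \frac{1}{3}\sqrt{\frac{a^2+3a}{18}} = \frac{\sqrt{a^2+3a}}{9\sqrt2}$ variables. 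The one delicate point, besides the $\#S$ identification, is verifying that $3\varphi_1+2\varphi_3$ (equivalently $\varphi_1+\varphi_3$, whichever one chooses) is genuinely totally positive and lies in $\O_K^\vee$ — but this is already recorded in Proposition \ref{proindecs12} — and that $\mathrm{Tr}(\delta\alpha)=1$ for every $\alpha$ in the chosen set, which is the explicit computation carried out there for type (v).
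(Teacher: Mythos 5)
Your proposal is correct and follows essentially the same route as the paper: apply Theorem \ref{thm:quaformsbounds} with $\P(\O_K)=6$, $\#S=\frac{a^2+3a}{18}+2a+2$ (the count of Theorem \ref{teolistindec}, since totally positive units are squares and indecomposables are totally positive, so no signature expansion occurs), and $n=\frac{a^2+3a}{18}$ coming from the triangle (v) paired with $\delta=3\varphi_1+2\varphi_3$. Your detours (the signature worry for $\#S$ and the threshold check for $a>64$, which is fine since $a>64$ means $a\geq 65$, indeed $a\geq 75$ in this family) both resolve to the paper's argument.
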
 

\begin{proof}
Regarding the first part of the statement, we know that $\P(\O_K)=6$ by Proposition \ref{prop:pythagoras6}. Moreover, in $K$, every totally positive unit is a square. Thus $\#S=\frac{a^2+3a}{18}+2a+2$, which is the number of indecomposable integers up to multiplication by totally positive units.

To get the largest possible value in (2) and (3), it is convenient to consider the triangle of indecomposables $\alpha$ in (v) of Theorem \ref{teolistindec}, for which we have found one totally positive element $\delta$ of the codifferent satisfying $\text{Tr}(\alpha\delta)=1$. In this case, $n=\frac{a^2+3a}{18}$, which gives parts (2) and (3) of the proposition.    
\end{proof}

\section*{Acknowledgements}

The authors want to thank Vít\v{e}zslav Kala and Giacomo Cherubini for enriching discussions and helpful suggestions on the contents of this work. The authors also wish to express their thanks to László Remete and Stéphane Louboutin for their quick answers. The first author was supported by Czech Science Foundation GA\v{C}R, grants 21-00420M and 24-11088O, by Charles University Research Centre program UNCE/SCI/022, and by the \textit{Ministerio de Ciencia e Innovación}, grant PID2022-136944NB-I00. The second author was supported by Czech Science Foundation GA\v{C}R, grant 22-11563O.

\printbibliography[heading=bibintoc]

\end{document}